\documentclass[12pt]{amsart}

\usepackage{amsthm,amssymb,amsmath,amstext,amsfonts}
\usepackage{enumitem,mathtools,pgfplots,pgfmath}
\pgfdeclarelayer{background}
\pgfsetlayers{background,main}
\usepackage[hyphens]{url}
\pgfdeclarelayer{background}
\pgfsetlayers{background,main}
\usepackage[margin=1in,letterpaper,portrait]{geometry}
\usepackage[latin1]{inputenc}
\usepackage{url,graphicx}
\usepackage{caption} 
\captionsetup[table]{name=Figure}
\usepackage{mathtools,breqn}
\usepackage{stackengine}
\usepackage{scalerel}
\usepackage{dirtytalk}
\usepackage{bbm}
\usepackage{hhline}
\stackMath
\newcommand{\uP}{\underline{P}} 
\newcommand*{\Scale}[2][4]{\scalebox{#1}{$#2$}}%

\theoremstyle{plain}
\theoremstyle{definition}
\newtheorem{theorem}{Theorem}[section]
\newtheorem*{theorem-nonum}{Theorem}
\newtheorem{conjecture}[theorem]{Conjecture}

\newtheorem{lemma}[theorem]{Lemma}
\newtheorem{definition}[theorem]{Definition}
\newtheorem*{definition-nonum}{Definition}

\newtheorem{example}{Example}

\newtheorem{corollary}[theorem]{Corollary}

\DeclareMathAlphabet{\mathpzc}{OT1}{pzc}{m}{it}
\DeclareMathOperator{\red}{red}
\DeclareMathOperator{\cnt}{cnt}
\DeclareMathOperator{\drops}{drops}
\DeclareMathOperator{\peakSqSum}{peakSqSum}
\DeclareMathOperator{\des}{des}
\DeclareMathOperator{\adj}{adj}
\DeclareMathOperator{\Var}{Var}
\DeclareMathOperator{\subs}{subs}
\DeclareMathOperator{\Po}{Po}
\usepackage{multicol}
\usepackage{amsaddr}

\begin{document}

\title{Moments of permutation statistics and central limit theorems}

\author{Stoyan Dimitrov}
\address{University of Illinois at Chicago}
\author{Niraj Khare}
\address{Carnegie Mellon University in Qatar}
\email{sdimit@uic.edu, nkhare@cmu.edu}



\maketitle

\begin{abstract}
 We show that if a permutation statistic can be written as a linear combination of bivincular patterns, then its moments can be expressed as a linear combination of factorials with constant coefficients. This generalizes a result of Zeilberger. We use an approach of Chern, Diaconis, Kane and Rhoades, previously applied on set partitions and matchings. In addition, we give a new proof of the central limit theorem (CLT) for the number of occurrences of classical patterns, which uses a lemma of Burstein and H\"{a}st\"{o}. We give a simple interpretation of this lemma and an analogous lemma that would imply the CLT for the number of occurrences of any vincular pattern. Furthermore, we obtain explicit formulas for the moments of the descents and the minimal descents statistics. The latter is used to give a new direct proof of the fact that we do not necessarily have asymptotic normality of the number of pattern occurrences in the case of bivincular patterns. Closed forms for some of the higher moments of several popular statistics on permutations are also obtained.
 \end{abstract}

\section{Introduction}
If we have two combinatorial objects, $O$ and $p$, a natural question to ask is how many times does $p$ occur as part of $O$. Loosely speaking, we will refer to $p$ as the \emph{pattern}. Patterns in various combinatorial structures have been extensively studied in the past. This includes patterns in set partitions \cite{mansourBook}, trees \cite{trees}, Dyck paths \cite{dyckPaths} and permutations \cite{bonaBook,kitaev}. Many important statistics on these and other structures can be represented as linear combinations of patterns (i.e., the number of occurrences of certain patterns). 

Chern et al. \cite{partitions} showed that the moments (mean, variance and higher moments) of any such statistic on set partitions of $\{1,2,\ldots ,n\}$, can be written as a linear combination of shifted Bell numbers with coefficients that are polynomials in $n$. Their technique was also used for patterns in perfect matchings \cite{matchings}, in which case the moments of the corresponding statistics can be expressed as linear combinations of double factorials with constant coefficients. This is an analogous result since the total number of perfect matchings of given size is a double factorial, whereas the total number of set partitions of given size is a Bell number. 

In this paper, we adapt the approach of Chern et al. to permutations and obtain an analog to both of the mentioned results by showing that if a statistic on permutations of size $n$ can be written as a linear combination of bivincular patterns, then each of its higher moments can be expressed as a linear combination of shifted factorials of $n$ with constant coefficients. This generalizes the main theorem of Zeilberger in \cite{Z.I}, where he showed that each of these higher moments for the number of occurrences of any classical pattern is a polynomial in $n$ of a certain degree depending on the pattern. The same was proved for the variance of the number of occurrences of any vincular pattern \cite[Lemma 4.2]{hofer} and for an arbitrary moment of the number of classical pattern occurrences, when we sample from a conjugacy class of permutations \cite{gaetz}.

The obtained result (Theorem \ref{th:main}) allows us to derive exact formulas for the moments of various permutation statistics based on data for small values of $n$. Furthermore, we give new proofs to some central limit theorems for the number of permutation pattern occurrences.

\subsection{Central limit theorems for permutation patterns}
Assume that the two objects $O$ and $p$ are permutations. Some constraints on $p$ give us different types of patterns in permutations: consecutive, classical, vincular and bivincular. When $p$ is fixed and $O$ is selected at random from a set of permutations of given size, then we are naturally interested in the distribution of the number of occurrences of $p$, when the size of $O$ approaches infinity. 

Several previous works establish asymptotic normality of this distribution for different sets of patterns in permutations selected uniformly at random. For example, see Feller \cite[3rd ed., p.257]{feller} (for inversions), Mann \cite{mann} (for descents), Fulman \cite{fulman} (for both inversions and descents), Goldstein \cite{goldstein} and Borga \cite{jacopo} (for consecutive patterns), B\'{o}na \cite{bona} (for classical patterns) and Hofer \cite{hofer} (for vincular patterns). However, the number of occurrences of some simple bivincular patterns is not normally distributed (see Section \ref{subsec:bivinc}).

The recent works of Gaetz and Ryba \cite{gaetz} and Kammoun \cite{kammoun} establish normal limit laws on certain classes of permutations for classical and vincular patterns, respectively. In addition, Janson \cite{janson1,janson2} showed that the number of pattern occurrences is not normally distributed when we sample from the permutations avoiding a certain fixed pattern. Earlier, Janson, Nakamura and Zeilberger \cite{JNZ} initiated the study of the same general question. Two articles proving asymptotic normality for random permutations selected not according to the uniform measure are \cite{mallow,ferayEvans}. Finally, some important works \cite{baxter,zohar,JNZ} give central-limit theorems for certain joint-distributions of pattern occurrences. The listed articles use various approaches, from the method of moments \cite{Z_moments} to dependency graphs, Stein's method (see \cite[Section 3]{hofer} for overview of both methods) and the theory of U-statistics \cite[Chapter XI]{jansonBook}. 

We give a new proof of the central limit theorem (CLT) for the number of occurrences of any fixed classical pattern, first obtained by B\'{o}na \cite{bona}. In particular, we show that the lower bound for the variance of this number, which is a major part of his proof, follows from a lemma of Burstein and H\"{a}st\"{o} \cite{burstein}. We give a new simple interpretation of this lemma, which provides an intuitive explanation of why this CLT holds. We obtain a similar lemma with an analogous interpretation for the more general case of vincular patterns, which must hold since the CLT for an arbitrary vincular pattern was established by Hofer \cite{hofer}. Giving a combinatorial proof of the lemma, either in the case of classical or vincular patterns will be of great interest. Finally, we use a formula for the $r$-th moment of the minimal descent statistic that we obtain with the adapted approach of Chern et al., to give a new direct proof of the fact that we do not necessarily have asymptotic normality in the case of bivincular patterns. In particular, we show that the minimal descent statistic, which counts the number of occurrences of a simple bivincular pattern, has Poisson distribution. The most recent proof of this fact was given by Corteel et al. \cite{corteel}.

\subsection{Summary of the paper}
\label{subsec:summary}
The structure of the paper is as follows. In Sections \ref{sec:defs}, \ref{sec:aggr} and \ref{sec:hMoments} we adapt the definitions and tools developed in \cite{partitions, matchings} to permutations. Our main result, giving a closed form for the higher moments of a large class of permutation statistics, is Theorem \ref{th:main}. In Section \ref{sec:descents}, we demonstrate how one can use an important Corollary of our main result to obtain explicit formulas for any given higher moment of some simple permutation statistics, e.g., descents and minimal descents. Section \ref{sec:classical} contains the new proof of the CLT for the number of occurrences of classical patterns and Section \ref{sec:vincular} discusses how one can use the same approach to obtain a proof of the CLT in the more general case of vincular patterns. In Section \ref{subsec:bivinc}, we use one of the two formulas, obtained in Section \ref{sec:descents} to show that there are simple bivincular patterns whose number of occurrences does not have asymptotically normal distribution. Finally, in Section \ref{sec:lin_val_polynomials}, we give explicit formulas for the aggregates (resp., for the moments) of some permutation statistics in certain special cases, where a linearity of expectation arguments are directly applicable.

\section{Definitions and examples}
\label{sec:defs}
 Let $S_{n}$ be the set of all permutations of $[n]\coloneqq \{1,2, \ldots , n\}$, that is, the set of all bijections from $[n]$ to $[n]$. We will write any permutation $\pi$ using the one-line notation $\pi = \pi_{1}\pi_{2} \ldots \pi_{n}$, where $\pi(i)=\pi_{i}$ for all $i\in [n]$. Let $A(\pi)$ be the set of distinct pairs of integers $(u,v)$, such that $u$ occurs before $v$ in $\pi$. Formally, $A(\pi) \coloneqq \{(u,v)\mid u = \pi_{i}, v = \pi_{j},i<j\}$. 

To define statistics on permutations, we need the following definition of pattern, which is an analogue of those in \cite{partitions} for set partitions and \cite{matchings} for matchings.

\begin{definition} 
\label{def:pattern}
\leavevmode
\begin{enumerate}
    \item[(i)] A \emph{permutation pattern} $\underline{P}$ of length $k$ is a tuple $\underline{P} = (P,\boldsymbol{C}(\underline{P}),\boldsymbol{D}(\underline{P}))$, where $P = p_{1}\cdots p_{k}$ is a permutation of length $k$ and $\boldsymbol{C}(\underline{P})\subseteq [k-1]$, $\boldsymbol{D}(\underline{P})\subseteq [k-1]$ are two subsets.
    
    \item[(ii)] An \emph{occurrence} of the pattern $\underline{P}=(p_1p_2\cdots p_k,\boldsymbol{C}(\underline{P}), \boldsymbol{D}(\underline{P}))$ of length $k$ in $\sigma \in S_{n}$ is a tuple $s = (t_{1},t_{2}, \ldots , t_{k})$ with $t_{i} \in [n]$, such that:
    \begin{itemize}
        \item[a)] $t_1<t_2<\cdots<t_{k}$.
        \item[b)] $(t_{i},t_{j})\in A(\sigma)$, if and only if $(i,j)\in A(P)$.
        \item[c)] if $i\in \boldsymbol{C}(\underline{P})$, then $\sigma^{-1}(t_{p_{i+1}}) = \sigma^{-1}(t_{p_i})+1$, i.e., the positions of $t_{p_i}$ and $t_{p_{i+1}}$ in $\sigma$ are consecutive.
        \item[d)] if $i\in \boldsymbol{D}(\underline{P})$, then $t_{i+1} = t_{i}+ 1$, i.e., the values of $t_{i}$ and $t_{i+1}$ in $\sigma$ are consecutive.
    \end{itemize}
\end{enumerate}
    \end{definition}
Definition \ref{def:pattern} is equivalent to the definition of the so-called \emph{bivincular} patterns in permutations introduced by Bousquet-M\' elou et al. \cite {bivincular}. When $\boldsymbol{D}(\underline{P})= \emptyset$, then $\underline{P}$ is one of the \emph{vincular} patterns introduced by Babson and Steingr\' imsson \cite{babson}. When both $\boldsymbol{C}(\underline{P})= \emptyset$ and $\boldsymbol{D}(\underline{P})= \emptyset$, then $\underline{P}$ is a \emph{classical} pattern. For simplicity, when we have a classical pattern $\underline{P}$, we will refer to it just by writing the permutation $P$. For example, the classical pattern $\underline{P} = (132,\emptyset,\emptyset)$ will be denoted by $132$. When we have a vincular pattern, we will write $P$ with the positions $i$ and $i+1$ of $P$ being underlined for every $i\in\boldsymbol{C}(\underline{P})$. By that, we will indicate that these two numbers must be at consecutive positions in every occurrence of the pattern. For example, the vincular pattern $\underline{P} = (2314,\{2\},\emptyset)$ will be written as $2\underline{31}4$. When $\boldsymbol{D}(\underline{P})$ is non-empty, then we will use the two-line notation when referring to $\underline{P}$: if $P\in S_{k}$, the identity $id_{k}=12\cdots k$ will be on the top row with the numbers $j$ and $j+1$ of $id_{k}$ being overlined, for every $j\in\boldsymbol{D}(\underline{P})$. By that, we will indicate that the values of $t_{j}$ and $t_{j+1}$ must be consecutive in every occurrence $(t_{1}, \ldots , t_{k})$ of the pattern. On the bottom row, we will have the vincular pattern $\underline{P'} = (P,\boldsymbol{C}(\underline{P}),\emptyset)$ written in the usual one-line notation. For example, the bivincular pattern $\underline{P} = (43125,\{3\},\{1,4\})$ will be written as $\Vectorstack{\overline{12}3\overline{45} 43\underline{12}5}$.

We will write $s\in_{\underline{P}}\sigma$ if $s$ is an occurrence of $\underline{P}$ in $\sigma$. Throughout the paper, we will need the following definition.
\begin{definition}
Let $q=q_{1}\cdots q_{k}$ be a sequence of $k$ different numbers. The \emph{reduction} of $q$, denoted by $\red(q)$, is the unique permutation $\pi = \pi_{1}\cdots\pi_{k}\in S_{k}$, such that its elements are in the same relative order as the elements of $q$, i.e., $\pi_{i}<\pi_{j}$ if and only if $q_{i}<q_{j}$, for all $i,j\in [k]$. The permutation $\red(q)$ can be obtained by replacing the $i$-th smallest element of $q$ with $i$, for every $i\in [k]$.
\end{definition}

For example, $\red(523)=312$. Note that condition $(b)$ in the second part of Definition \ref{def:pattern} implies that if $s\in_{\underline{P}}\sigma$ and the elements of $s$ form the subsequence $q$ of $\sigma$, then $\red(q)=P$, i.e., the relative order of the numbers of the permutation $P$ and the numbers of $s$ in $\sigma$ is the same.
\leavevmode \newline

\textbf{Examples (occurrence of patterns):} \leavevmode

\begin{itemize}
    \item[1.] $(t_1,t_2,t_3)=(3,4,5)$ is an occurrence of $132 = (132,\emptyset,\emptyset)$ in $\sigma = 31524$, since $\red(354)=132$. 
    \item[2.] $(t_{1},t_{2},t_{3},t_{4})=(2,3,5,7)$ is an occurrence of $\underline{32}14 = (3214,\{1\},\emptyset)$ in  $\sigma = 4536217$ since $\red(5327)=3214$ and the positions of $t_{p_{1}}=t_{3}=5$ and $t_{p_{2}}=t_{2}=3$ in $\sigma$ are consecutive. 
    \item[3.] $(t_{1},t_{2},t_{3},t_{4})=(1,3,5,6)$ is an occurrence of $\scaleto{\Vectorstack{12\overline{34} 4\underline{31}2}}{25pt} = (4312,\{2\},\{3\})$ in  $\sigma = 625143$ since $\red(6513)=4312$, the positions of $t_{p_{2}}=t_{3}=5$ and $t_{p_{3}}=t_{1}=1$ in $\sigma$ are consecutive and the values of $t_{3}=5$ and $t_{4}=6$ in $\sigma$ are consecutive.  
\end{itemize}

The number of occurrences of the pattern $\underline{P}$ in $\sigma$ will be denoted by $\cnt_{\underline{P}}(\sigma)$. In the literature, usually a \emph{permutation statistic} is a function $T:S\to \mathbb{N}$, where $S=\bigcup_{i=1}^{\infty} S_{n}$. In this paper, when we write $\emph{statistic}$ or $\emph{simple statistic}$, we will refer to two classes of such functions defined below.

\begin{definition} \leavevmode

\begin{itemize}
        \item[(i)] A simple statistic is defined by a pattern $\underline{P}$ of length $k$ and a valuation function $Q(s,w)=Q_1(s)Q_2(w)$, which is a product of two polynomials $Q_1, Q_2\in\mathbb{Z}[y_{1},\ldots ,y_{k},m]$. If $\sigma\in S_n$ and $s = (t_{1},t_{2}, \ldots , t_{k}) \in_{\underline{P}}\sigma$, such that $\sigma(w_i)=t_i$, for all $i \in [k]$, then write $Q(s,\sigma^{-1}(s)) = Q_1(s)Q_2(\sigma^{-1}(s)) = Q_1\mid_{y_{i} = t_{i},m=n} Q_2\mid_{y_{i} = w_{i},m=n}$. Let 
    \begin{equation*}
        f(\sigma) = f_{\underline{P},Q}(\sigma)\coloneqq \sum_{{s\in_{\underline{P}}\sigma}
       }Q(s,\sigma^{-1}(s)) = \sum_{{s\in_{\underline{P}}\sigma}
       }Q_1(s)Q_2(\sigma^{-1}(s)).
    \end{equation*}
    
\noindent Let the degree of a simple statistic $f_{\underline{P},Q}$, denoted $d(f)$, be the sum of twice the length of $P$ and the degree of $Q$, which is the sum of the degrees of $Q_1$ and $Q_2$. 

\item[(ii)] A \emph{statistic} is a finite $Q$-linear combination of simple statistics. The degree of a statistic is defined to be the minimum, over all such representations, of the maximum degree of any of the included simple statistics.
\end{itemize}
\end{definition}

\textbf{Examples.}
\begin{itemize}
    \item[] $\cnt_{\underline{P}}\coloneqq f_{\underline{P},1}(\sigma)=\sum_{s\in_{\underline{P}}\sigma}1$, which counts the number of occurrences of the pattern $\underline{P}$ in $\sigma$, is a simple statistic for any pattern $\underline{P} = (P,\boldsymbol{C}(\underline{P}),\boldsymbol{D}(\underline{P}))$, with valuation function $Q=1$. If $P$ is of length $k$, then the degree of the statistic is $d(\cnt_{\underline{P}}) = 2k$. The first three examples we give below are of this kind, for $\underline{P}$ being classical, vincular and bivincular pattern (for which $\boldsymbol{C}(\underline{P}) \neq \emptyset$ and $\boldsymbol{D}(\underline{P}) \neq \emptyset$), respectively. \\
    \item[1.] \emph{Number of occurrences of $1324$.} 
    $$\cnt_{1324}(\sigma)=f_{1324,1}(\sigma) = \sum_{{s\in_{1324}\sigma}
       }1$$
    is the number of occurrences of the classical pattern $(1324,\emptyset,\emptyset)=1324$ in $\sigma$. This is the only classical pattern of length less than five for which the sequence of the number of permutations avoiding it, for different values of $n$, has not been enumerated yet. Two recent works related to this problem are \cite{bevan1324,mansour1324}.
    
    \item[2.] \emph{Number of double ascents}.
    \begin{equation*}
        \cnt_{\underline{123}}(\sigma) = \sum_{{s\in_{\underline{123}}\sigma}
       }1
    \end{equation*}
    is the number of occurrences of the vincular pattern $(123,\{1,2\},\emptyset)=\underline{123}$ in $\sigma$. The vincular patterns for which $\boldsymbol{C}(\underline{P})=[k-1]$ are called \emph{consecutive}. The generating function and the distribution of this statistic, as well as of $\cnt_{\underline{P}}$ for other vincular patterns of this kind were investigated in \cite{elizaldeNoy}.
    
    \item[3.] \emph{Number of occurrences of $\scaleto{\Vectorstack{1\overline{23} 3\underline{12}}}{25pt}$}.
    
    \begin{equation*}
        \cnt_{\scaleto{\Vectorstack{1\overline{23} 3\underline{12}}}{16pt}}(\sigma) = \sum_{{s\in_{\scaleto{\Vectorstack{1\overline{23} 3\underline{12}}}{11pt}}\sigma}
       }1
    \end{equation*}
    is the number of occurrences of the bivincular pattern $(312,\{2\},\{2\})$ in $\sigma$. It was shown in \cite{eriksen} that the number of permutations in $S_n$ with $k$ occurrences of this pattern is equal to the number of matchings on $[2n]$ with $k$ right nestings and no left nestings. 
    
    \item[4.] \emph{Descent drop}.
    \begin{equation*}
    \drops(\sigma) = \sum\limits_{\sigma_{i}>\sigma_{i+1}} \sigma_{i}-\sigma_{i+1} = \sum_{(t_{1},t_{2})\in_{\underline{21}}\sigma} t_{2}-t_{1}
    \end{equation*}
    is a simple statistic corresponding to the pattern $(21,\{1\},\emptyset)$ with valuation function $Q(s,w)=Q_{1}(s)Q_{2}(w)$, where $Q_{1}(s)=Q_{1}(t_{1},t_{2})=t_{2}-t_{1}$ and $Q_{2}(w)=1$. Thus, $deg(Q)=1$ and $d(\drops)=5$. Petersen and Tenner \cite{bridget} showed that this statistic is equidistributed with the statistic $dp(\sigma) = \sum_{\sigma(i)>i}\sigma(i)-i$, which they call \say{depth}. The depth of a permutation is half of another important statistic called \say{total displacement} or \say{Spearman's disarray}, whose generating function was found in \cite{petersonGF}. 
    
    \item[5.] \emph{Sum of peak squares}.
    \begin{equation*} 
        \peakSqSum(\sigma) = \sum_{\sigma(i-1)<\sigma(i)>\sigma(i+1)}\sigma(i)^{2}= \sum_{(t_{1},t_{2},t_{3})\in_{\underline{132}}\sigma} t_{3}^{2} + \sum_{(t_{1},t_{2},t_{3})\in_{\underline{231}}\sigma} t_{3}^{2}
    \end{equation*}
    is a statistic, which is a sum of the two simple statistics $f_{1} = f_{\underline{132},t_{3}^{2}}$ and $f_{2}= f_{\underline{231},t_{3}^{2}}$. Thus, $d(\peakSqSum)=max(d(f_{1}),d(f_{2})) = 8$. Two articles investigating the number of interior peaks and the number of permutations with a given set of peak values, called \say{pinnacle set}, are \cite{peaks} and \cite{bridget2}, respectively. To the best of our knowledge, the sum of the peaks and the sum of the squares of the peaks have not been yet investigated, despite of the recent interest in pinnacle sets \cite{lopez,sagan, rusu}. 
\end{itemize}
In the next two sections, we will show that the moments of all statistics are also statistics, as defined above, and we will give closed forms for one of these moments for each of the statistics above.

\section{Aggregates  of permutation statistics}
\label{sec:aggr}
We are often interested in the expected value $\mathbb{E}(f)$ of the permutation statistic $f$, for a permutation chosen uniformly at random from $S_{n}$. Obviously, we have $\mathbb{E}(f) = M(f,n)/n!$, where \[
M(f,n)\coloneqq\sum_{\sigma\in S_{n}}f(\sigma).
\] 
In this section, we show that the aggregate $M(f,n)$ is a linear combination of factorials with constant coefficients. This is an analogue of the results in \cite{partitions} for aggregates of set partition statistics and those in \cite{matchings} for aggregates of statistics on matchings. To deal with the constraints caused by $\boldsymbol{C}(\uP)$ and $\boldsymbol{D}(\uP)$, we use the same technique to compress numbers used in both of these articles. 
 \begin{theorem} \label{th:simpleStat}  
  Let $f_{\uP, Q}$ be a simple statistic of degree $m$ associated with the pattern $\uP$ of length $k$ and the valuation polynomial $Q(s,w)=Q_1(s)Q_2(w)$. Assume that $c=|\boldsymbol{C}(\uP)|$ and $d=|\boldsymbol{D}(\uP)|$. Then 
  \begin{eqnarray} \label{polynomial-form-exten} 
   M(f_{\uP, Q}, n) = R(n) (n-k)! 
  \end{eqnarray} 
where $R(x)$ is a polynomial of degree no more than $m-c-d$. 
Equivalently for $n \geq k$, $M(f,n)$ can be expressed as a linear combination of shifted factorials with constant coefficients, i.e., 
\begin{eqnarray} \label{T-ext-form} 
 M(f_{\uP, Q}, n) =  \left\{ 
\begin{array}{ll} 
0 & n < k \\ 
\displaystyle \sum_{i= 0 }^{m-c-d} c_i (n-k+i)!  & n \geq k 
\end{array} \right. ,
\end{eqnarray} 
for some constants $c_i \in \mathbb{Q}$. 
\end{theorem}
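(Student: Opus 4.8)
The plan is to compute the aggregate $M(f_{\uP,Q},n)=\sum_{\sigma\in S_n}\sum_{s\in_{\uP}\sigma}Q_1(s)\,Q_2(\sigma^{-1}(s))$ by interchanging the two sums, summing first over all abstract occurrences and then over the permutations realizing each one. I would encode an occurrence by its $k$ positions $a_1<a_2<\cdots<a_k$ together with its $k$ values $t_1<t_2<\cdots<t_k$; by the remark following the reduction definition, condition (b) of Definition \ref{def:pattern} forces $\red(\sigma(a_1),\ldots,\sigma(a_k))=P$, hence $\sigma(a_i)=t_{p_i}$, so the two increasing sequences $(a_i)$ and $(t_j)$ determine the restriction of $\sigma$ to these positions completely. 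The first key point is that once such a configuration is fixed, the remaining $n-k$ values can be placed in the remaining $n-k$ positions in any order, so exactly $(n-k)!$ permutations contain that occurrence. This produces the factor $(n-k)!$ of \eqref{polynomial-form-exten} and reduces the claim to showing that the configuration sum $\sum Q_1(t)\,Q_2(w)$ — where $w_j=a_{P^{-1}(j)}$ is the position sequence read off from $(a_i)$ — is a polynomial in $n$ of degree at most $m-c-d$. When $n<k$ no configuration exists, giving $M=0$.

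Next I would exploit the product structure $Q=Q_1Q_2$. Since $Q_1$ depends only on the values $t=(t_1,\ldots,t_k)$ and $Q_2$ only on the positions $w$ (a fixed relabelling of the $a_i$), and since the constraints from $\boldsymbol{C}(\uP)$ restrict only the positions while those from $\boldsymbol{D}(\uP)$ restrict only the values, the configuration sum factors as a product of an independent position-sum and value-sum. Here I would use the compression device of \cite{partitions,matchings}: translating conditions (c) and (d) through $\sigma(a_i)=t_{p_i}$ shows that $i\in\boldsymbol{C}(\uP)$ forces $a_{i+1}=a_i+1$ and $i\in\boldsymbol{D}(\uP)$ forces $t_{i+1}=t_i+1$. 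Thus the $c$ position-constraints merge $a_1,\ldots,a_k$ into $k-c$ runs of consecutive integers, each determined by a single free starting position, and likewise the $d$ value-constraints leave $k-d$ free value-parameters. The position-sum then becomes a nested sum of $Q_2$ over $k-c$ free variables, and the value-sum a nested sum of $Q_1$ over $k-d$ free variables.

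The final step is the degree count. Summing a polynomial of total degree $\delta$ (in the free variables and in $n$, after setting $m=n$) over one index with bounds linear in $n$ and in the remaining indices yields a polynomial of degree $\delta+1$, by Faulhaber's formula; iterating adds one per summation variable. Hence the value-sum has degree at most $\deg(Q_1)+(k-d)$ and the position-sum degree at most $\deg(Q_2)+(k-c)$, so their product $R(n)$ has degree at most $\deg(Q_1)+\deg(Q_2)+2k-c-d=m-c-d$. The equivalence with \eqref{T-ext-form} is then immediate: the polynomials $(n-k+i)!/(n-k)!=(n-k+1)\cdots(n-k+i)$ for $i=0,\ldots,m-c-d$ form a basis of the space of polynomials in $n$ of degree at most $m-c-d$, so $R(n)(n-k)!$ rewrites as $\sum_i c_i\,(n-k+i)!$ with constants $c_i\in\mathbb{Q}$.

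I expect the main obstacle to be the bookkeeping in the compression and degree step: one must verify that the nested ordering constraints among the $k-c$ position-runs (and the $k-d$ value-runs), including the gaps forced between consecutive runs, do not spoil the ``one extra degree per free summation variable'' bound, and that the block structure is counted without double-counting. Everything else is a routine interchange of summation together with an application of Faulhaber's formula.
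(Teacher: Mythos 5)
Your proposal is correct and follows essentially the same route as the paper's proof: interchange the two sums to extract the factor $(n-k)!$, observe that the value-constraints (from $\boldsymbol{D}(\uP)$) and position-constraints (from $\boldsymbol{C}(\uP)$) are independent so the configuration sum factors into a $Q_1$-sum times a $Q_2$-sum, compress the constrained coordinates into $k-d$ free value-variables and $k-c$ free position-variables, bound the degree by $m-c-d$, and expand in the basis $(n-k+i)!/(n-k)!$. The only cosmetic difference is that you justify the degree count by iterated Faulhaber summation, whereas the paper bounds the two sums by the degrees of $\binom{n-d}{k-d}\tilde{Q_1}$ and $\binom{n-c}{k-c}\tilde{Q_2}$; the content is the same.
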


\begin{proof} 
Let $V_{n,k}:=\{(t_1,t_2,\cdots,t_k)\in [n]^k \mid 1\leq t_1<t_2<\cdots<t_k\leq n\}$ be the set of increasing vectors of $k$ numbers in $[n]$. For simplicity, fix $n$ and $k$ and let $T\coloneqq V_{n,k}$. Note that if $s\in_{\underline{P}} \sigma$ for some $\sigma \in S_n$, then $s\in T$. Let us also define $W:=\{(w_1,w_2,\ldots,w_k)\in [n]^k \mid \text{for all }i,j \in [k],\text{ if }w_i = w_j, \text{then }i=j\}$. Note that $|T| = \binom{n}{k}$ and $|W| = n(n-1)\cdots (n-k+1)$. We have
 \begin{eqnarray*}
  M(f_{\uP, Q}, n)  = 
  \sum_{\sigma \in {S}_{n}} f_{\uP, Q}(\sigma) &  = & \sum_{\sigma \in {S}_{n}} \sum_{s \in_{\uP} \sigma} Q_1(s)Q_2(\sigma^{-1}(s)) \\
  & = & \sum_{s \in T}\sum_{\substack{{\sigma \in S_n}\\{s \in_{\uP}\sigma}}}Q_1(s)Q_2(\sigma^{-1}(s))= \sum_{s \in T}Q_1(s)\sum_{\substack{{\sigma \in S_n}\\{s \in_{\uP}\sigma}}}Q_2(\sigma^{-1}(s)).
 \end{eqnarray*} 
For any $s \in T$, let $G(s):=\{\sigma \in S_n \mid s\in_{\uP} \sigma\}$ and for any $w\in W$, let $H(w):=\{\sigma \in S_n \mid (t_{1},\ldots ,t_{k})\in_{\uP} \sigma,\text{ where } \sigma^{-1}(t_i)=w_i \text{, for all } i\in [k]\}$. In addition, for any $s\in T$ and $w\in W$, let $Z(s,w):=\{\sigma \in S_n \mid s=(t_{1},\ldots ,t_{k})\in_{\uP} \sigma,\text{ } \sigma^{-1}(t_i)=w_i \text{ for all } i\in [k]\}$.  Clearly, for any $s\in T$ and $w, w' \in W$ for which $w\neq w'$, we have $Z(s,w)\cap Z(s,w')=\emptyset$. Also, note that $G(s)=\cup_{w\in W}Z(s,w) $. Hence, we can rewrite the above equations in the following way:
 \begin{eqnarray*}
  M(f_{\uP, Q}, n) & = & \sum_{s \in T}Q_1(s)\sum_{\substack{{\sigma \in S_n}\\{s \in_{\uP}\sigma}}}Q_2(\sigma^{-1}(s)) = \sum_{s \in T}Q_1(s)\sum_{\sigma \in G(s)}Q_2(\sigma^{-1}(s))\\
  & = & \sum_{s \in T}Q_1(s)\sum_{\sigma \in \cup_{w\in W}Z(s,w)}Q_2(w) = \sum_{s \in T}Q_1(s)\left(\sum_{w\in W}Q_2(w)\sum_{\sigma \in Z(s,w)}1\right)\\
 \end{eqnarray*} 
Consider any fixed vector of values $s\in T$ and a vector of positions $w\in W$. If $Z(s,w)\neq \emptyset$, then $|Z(s,w)|=(n-k)!$ since the remaining $n-k$ values, except those in $s$, can be arranged in all the possible ways at the remaining $n-k$ positions, which are not in $w$. Furthermore, if we define $T' \coloneqq \{s\in T \mid G(s)\neq \emptyset\}$ and $W' \coloneqq \{w\in W \mid H(w)\neq \emptyset\}$, then observe that the values in any $s\in T'$ can be at the positions determined by any $w\in W'$ and vice versa. In other words, $Z(s,w)\neq \emptyset$, if and only if $s\in T'$ and $w\in W'$. Therefore, 
\begin{eqnarray*}
M(f_{\uP, Q}, n) &=& \sum_{s \in T}Q_1(s)\left(\sum_{w\in W}Q_2(w)\sum_{\sigma \in Z(s,w)}1\right)\\
&=& (n-k)! \left(\sum_{s \in T'}Q_1(s)\right)\left(\sum_{w\in W'}Q_2(w)\right). 
\end{eqnarray*}
Consider $s \in T$ and $w \in W$, such that $Z(s,w)\neq \emptyset$. Now, we will use the compression technique, which relies on the following observation: Since $|D(\underline{P})|=d$, every subset of $[n-d]$ of $k-d$ different numbers corresponds to a set of values $s\in T'$ and the correspondence is one-to-one. Formally, let us call $i+1$ a \emph{follower}, if $i  \in\boldsymbol{D}(\underline{P})$ and a \emph{non-follower}, if $i  \notin\boldsymbol{D}(\underline{P})$. If $g(i)\in [k]$ denotes the index of the $i$-th non-follower, then let $y_i\coloneqq t_{g(i)}-(g(i)-i)$. Then, the vector $s\in T'$ determines uniquely the vector $(y_1, \ldots, y_{k-d})$ and one can see that $y_{u}<y_{v}$, if $u<v$. Indeed, it suffices to show this for $v=u+1$. In this case we have $y_{u+1}=t_{g(u+1)}-(g(u+1)-(u+1))>t_{g(u+1)-1}-(g(u+1)-(u+1))$, but we must have that $t_{g(u+1)-1}=t_{g(u)}+(g(u+1)-g(u)-1)$, because all the numbers between $g(u)$ and $g(u+1)$ are followers. Thus, $y_{u+1}>t_{g(u)}+(g(u+1)-g(u)-1)-(g(u+1)-(u+1)) = t_{g(u)}-(g(u)-u) = y_{u}$. Conversely, for any $(y_1, \ldots, y_{k-d})\in V_{n-d,k-d}$, the vector  $(t_1, \ldots, t_k)$ is uniquely determined, since $t_{j}=y_i+j-i$, where $j$ is the index of the $i$-th non-follower and $t_{j}=t_{j-1}+1$, if $j$ is an index of a follower. Thus $Q_1$ can be viewed as a polynomial in $y_1, \ldots, y_{k-d}$ and $n$. 

We can proceed in the same way for $W'$ and $\boldsymbol{C}(\underline{P})$. The only difference is that the elements of any $w\in W'$ are not necessarily in increasing order. However, the elements of $\overline{w} = (w_{P^{-1}}(1),\ldots ,w_{P^{-1}}(k))$ are always in increasing order and the map $w\mapsto\overline{w}$ is a bijection. Thus, using this map, we can get a set $W''\subseteq W$, such that there is a bijection between $W'$ and $W''$ and a bijection between $W''$ and $V_{n-c,k-c}$ (by the compression technique). Hence there is a bijection between $W'$ and $V_{n-c,k-c}$ and $Q_2$ can be viewed as a polynomial in $x_1, \ldots, x_{k-c}$ and $n$, where $(x_1, \ldots, x_{k-c})\in V_{n-c,k-c}$. Therefore, we have
\begin{eqnarray*} \label{tildeQ} 
M(f_{\uP, Q}, n)=(n-k)!\sum_{\scaleto{(y_1, \ldots, y_{k-d})\in V_{n{-}d,k{-}d}}{7.5pt}}  
\tilde{Q_1}(y_1, \ldots, y_{k-d}, n)  \sum_{\scaleto{(x_1, \ldots, x_{k-c})\in V_{n{-}c,k{-}c}}{7.5pt}}   
\tilde{Q_2}(x_1, \ldots, x_{k-c}, n)      
\end{eqnarray*} 
 for some polynomials $\tilde{Q_1}$ and $\tilde{Q_2}$ of the same degree as $Q_1$ and $Q_2$, respectively. The product of the two sums above yields a polynomial in $n$ of degree at most the sum of the following two terms: the maximum possible degree of $n$ in the product $\binom{n-d}{k-d} \tilde{Q_1}$ and the maximum possible degree of $n$ in the product $\binom{n-c}{k-c} \tilde{Q_2}$. Therefore, the degree of the product is at most $k-d+deg(Q_1)+k-c+deg(Q_2)=(deg(Q_1)+deg(Q_2)+2k)-c-d = m-c-d$, since $m=d(f_{\underline{P},Q}) = (deg(Q_1)+deg(Q_2)+2k)$. 
 
To see Equation \eqref{T-ext-form}, let $g_i(n)$ be a polynomial in $n$ defined by 
$g_i(n) = {(n-k+i)!} / (n-k)!$. Then $g_i$ is of degree $i$, and hence $\{ g_i(n)\}_{i=0}^\infty $ form a 
basis of $\mathbb{Q}[n]$.
It follows that any polynomial of degree $i$ can be written as a linear combination of $g_0(n), \dots, g_{i}(n)$. This 
implies Equation \eqref{T-ext-form}. 
\end{proof}
Next, we consider any general statistic. Recall that a statistic is a $\mathbb{Q}$-linear combination of simple statistics. 
\begin{theorem} \label{general-sum} 
 For any statistic $f$ of degree $m$, there is a positive integer $L \leq \frac{m}{2}$, such that for all $n \geq L$, 
 \begin{eqnarray} \label{eq:statPolyForm}
   M(f, n)=  U(n) (n-L)!,
 \end{eqnarray} 
where $U(n)$ is a polynomial of degree no more than $m+L$. Equivalently, if $n \geq L $,
\begin{eqnarray}  \label{T-form2} 
 M(f, n)= \sum_{-L\leq i \leq m} \alpha{_i} (n+i)!, 
\end{eqnarray}
for some constants $\alpha{_i}  \in \mathbb{Q}$. 
\end{theorem}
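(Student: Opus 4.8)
The plan is to reduce the general statement to Theorem \ref{th:simpleStat}, which already handles a single simple statistic, and then control how the individual factorial expansions combine. By definition, a statistic $f$ of degree $m$ is a finite $\mathbb{Q}$-linear combination $f = \sum_{j} \lambda_j\, f_{\uP^{(j)}, Q^{(j)}}$ of simple statistics, where we may choose a representation realizing the degree, so that each simple statistic $f_{\uP^{(j)}, Q^{(j)}}$ has degree at most $m$. Write $k_j$ for the length of the pattern $\uP^{(j)}$, and $c_j, d_j$ for the sizes of $\boldsymbol{C}(\uP^{(j)})$ and $\boldsymbol{D}(\uP^{(j)})$. Since $M(\cdot, n)$ is linear in $f$, I would immediately get
\begin{equation*}
 M(f, n) = \sum_{j} \lambda_j\, M\!\left(f_{\uP^{(j)}, Q^{(j)}}, n\right),
\end{equation*}
and then substitute the closed form from Theorem \ref{th:simpleStat} for each summand.

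First I would set $L := \min_j k_j$; this is the length of the shortest pattern appearing in the chosen representation. Because $d(\cnt_{\uP^{(j)}}) = 2k_j \le d(f_{\uP^{(j)}, Q^{(j)}}) \le m$, every $k_j$ satisfies $k_j \le m/2$, so in particular $L \le m/2$, giving the claimed bound on $L$. For $n \ge L$ I would invoke Equation \eqref{T-ext-form} to write each $M(f_{\uP^{(j)}, Q^{(j)}}, n) = \sum_{i=0}^{m_j - c_j - d_j} c^{(j)}_i\, (n - k_j + i)!$, where $m_j \le m$ is the degree of the $j$-th simple statistic (and the term vanishes for $n < k_j$, which is automatically consistent once $n \ge L$). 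The key observation is that each factorial appearing here has the form $(n + \ell)!$ with an integer shift $\ell = i - k_j$ ranging between $-k_j \ge -m/2 \ge -L$ at the low end and $(m_j - c_j - d_j) - k_j$ at the high end; since $m_j \le m$ and $k_j, c_j, d_j \ge 0$, this upper shift is at most $m_j - k_j \le m$. Collecting all contributions by their shift $\ell$ and summing the (finitely many) rational coefficients that land on each $(n+\ell)!$ then yields Equation \eqref{T-form2} with constants $\alpha_i \in \mathbb{Q}$ and $-L \le i \le m$.

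To recover the factored form \eqref{eq:statPolyForm}, I would factor out the smallest factorial $(n-L)!$ from the linear combination $\sum_{-L \le i \le m} \alpha_i (n+i)!$. Each ratio $(n+i)!/(n-L)!$ is a product of consecutive integers $(n+i)(n+i-1)\cdots(n-L+1)$, hence a polynomial in $n$ of degree exactly $i + L$; taking $U(n) := \sum_{-L \le i \le m} \alpha_i\, (n+i)!/(n-L)!$ gives a polynomial of degree at most $m + L$, as required, and $M(f,n) = U(n)(n-L)!$ for all $n \ge L$. This last manipulation is essentially the inverse of the basis change (via the polynomials $g_i$) already used at the end of the proof of Theorem \ref{th:simpleStat}, so it is routine.

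The only genuinely delicate point is the treatment of $L$ and the lower index bound in \eqref{T-form2}. The subtlety is that the minimal shift $-k_j$ coming from the $j$-th simple statistic can be as negative as $-m/2$, and one must confirm that $L = \min_j k_j$ is indeed large enough that every shift $\ell$ satisfies $\ell \ge -L$, so that no factorial $(n+\ell)!$ with $\ell < -L$ (which would fail to be a nonnegative-integer factorial for small $n$) ever appears; this holds precisely because $-k_j \ge -\max_j k_j \ge -L$ is the wrong direction, so I must instead note $-k_j \ge -L$ fails in general and therefore take $L := \min_j k_j$ only after verifying that the $j$ achieving the most negative shift is the one with the \emph{largest} $k_j$. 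The clean fix, which I expect to be the main thing to get right, is to choose $L = \max_j k_j$ if one wants all shifts $\ge -L$, but to then re-examine the bound $L \le m/2$; reconciling these two constraints — ensuring simultaneously that $L \le m/2$ and that $-L$ is a valid lower limit for the index $i$ — is where the bookkeeping must be done carefully, and I would resolve it by absorbing any factorials with shift between $-\max_j k_j$ and $-L$ into $(n-L)!$ times a polynomial, exactly as in the factoring step above.
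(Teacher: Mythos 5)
Your overall strategy---decompose $f$ into simple statistics in a representation realizing the degree $m$, apply Theorem \ref{th:simpleStat} to each summand, and regroup the shifted factorials---is exactly the paper's route, but your handling of $L$ contains a genuine error. The constant must be $L=\max_j k_j$, not $\min_j k_j$: the most negative shift produced by the $j$-th summand is $-k_j$ (the term $(n-k_j)!$ itself), so with $L=\min_j k_j$ the expansion \eqref{T-form2} would contain factorials $(n+\ell)!$ with $\ell<-L$ whenever the pattern lengths are not all equal, and the factored form \eqref{eq:statPolyForm} would fail outright, because for $k_j>L$ the quotient $(n-k_j)!/(n-L)! = 1/\bigl[(n-L)(n-L-1)\cdots(n-k_j+1)\bigr]$ is a rational function, not a polynomial. (Your inequality chain ``$-k_j\ge -m/2\ge -L$'' runs the wrong way: $L\le m/2$ gives $-m/2\le -L$.) For the same reason, the ``clean fix'' in your last paragraph---choosing $L=\max_j k_j$ but then ``absorbing any factorials with shift between $-\max_j k_j$ and $-L$ into $(n-L)!$ times a polynomial''---is incoherent: if $L=\max_j k_j$ there is nothing to absorb, and if $L<\max_j k_j$ the absorption is impossible, since that quotient is the reciprocal of a polynomial.

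What you missed is that no reconciliation between the two constraints is needed: the bound you yourself proved, $2k_j\le d(f_{\uP^{(j)},Q^{(j)}})\le m$ for every $j$, gives $k_j\le m/2$ for \emph{every} $j$, hence $\max_j k_j\le m/2$ just as well as $\min_j k_j\le m/2$. This is precisely what the paper does: it sets $L=\max(k_i)$, notes $L\le m/2$, collects terms to write $M(f,n)=\sum_{j=0}^{L}U_j(n)(n-j)!$ with $\deg U_j\le m$, and then factors out $(n-L)!$, using that $(n-j)!/(n-L)!$ is a polynomial of degree $L-j$ whenever $j\le L$; expanding the resulting polynomial $U(n)$ (of degree at most $m+L$) in the basis $(n-L+i)!/(n-L)!$, $0\le i\le L+m$, yields \eqref{T-form2}. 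With the single substitution $L=\max_j k_j$ made at the start, the rest of your argument goes through verbatim and your spurious final paragraph can be deleted.
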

\begin{proof} 
 Assume that 
 \[
  f=\sum_{i=1}^t h_i f_{\uP_i, Q_i},
 \]
with $h_i \in \mathbb{Q}$. Then, by Theorem \ref{th:simpleStat},
\[
 M(f, n) = \sum_{i=1}^t h_i M( f_{\uP_i, Q_i}, n) = \sum_{i=1}^t h_i R_i (n) (n-k_i)!, 
\]
where $k_i$ is the length of $\underline{P_i}$ and the degree of $R_i(n)$ is no more than $deg(f_{\uP_i, Q_i})-d_i -c_i\leq m$, where $c_i=|\boldsymbol{C}(\underline{P_i})|$ and $d_i=|\boldsymbol{D}(\underline{P_i})|$. Combining the terms with the same $(n-k_i)!$ yields the equation 
\[
 M(f, n)= \sum_{j=0}^L U_j(n) (n-j)!, 
\]
where $U_j(n)$ is a polynomial of degree no more than $m$, and $L= \max(k_i) \leq \frac{m}{2}$.

As $\frac{(n-L+i)!}{(n-L)!}=(n-L+i)(n-L+(i-1))\cdots(n-L+1)$ is polynomial in $n$ of degree $i$, we obtain Equation \eqref{eq:statPolyForm} for $n \geq L$. In addition, $\frac{(n-L+i)!}{(n-L)!}$ for $0 \leq i\leq L+m$ forms a basis and Equation \eqref{T-form2} is obtained by expanding $U(n)$ under the basis $\{1, \frac{(n-L+1)!}{(n-L)!}, \frac{(n-L+2)!}{(n-L)!},\cdots, \frac{(n-L+L+m)!}{(n-L)!}\}$. 
\end{proof}

Theorem \ref{general-sum} allow us to obtain a closed form expression for $M(f,n)$ (and respectively for $\mathbb{E}(f)$), for any statistic $f$ whenever we know the exact values of $M(f,n)$ for a set of $L+m+1$ values of $n \geq L$, where $m=d(f)$. Then, we can take Equation \eqref{T-form2} and substitute each of these values for $n$. We get a system of $L+m+1$ linear equations, where the variables are the numbers $\alpha_{i}$, for $i\in [-L,m]$. After we solve it, we have a closed form expression for $M(f,n)$ as a linear combination of shifted factorials, coming from the same Equation \eqref{T-form2}. We used this approach and implemented a computer program, in order to obtain these closed forms for the aggregates of the statistics given as examples in Section \ref{sec:aggr}. Some of the results are listed below. \\

\textbf{Examples (formulas for aggregates of statistics):}\\

\begin{itemize}

\item[1.] $\cnt_{1324}$.\\
Recall that for the simple statistic $\cnt_{1324}=f_{\underline{P},Q}$, $\underline{P}=(1324,\emptyset,\emptyset)$ and $Q=1$. We have
\begin{equation*}
M(\cnt_{1324},n)=\frac{1}{24}n!-\frac{1}{6}(n+1)!+\frac{1}{8}(n+2)!-\frac{1}{36}(n+3)! + \frac{1}{576}(n+4)!.
\end{equation*}
In fact, a simple linearity of expectation argument gives that $M(\cnt_{P},n)= \frac{1}{k!}\binom{n}{k}n!$ for the number of occurrences of any classical pattern $P$ of length $k$. By using that the so-called Lah numbers, $L(k,j) = \binom{k-1}{j-1}\frac{k!}{j!}$, are the coefficients expressing rising factorials in terms of falling factorials, one can show that 
\begin{equation*} 
    M(\cnt_{P},n) = \frac{1}{k!}\binom{n}{k}n! =  \frac{(-1)^{k}}{k!}n! + \sum\limits_{j=1}^{k-1}\frac{(-1)^{k-j}}{(j!)^{2}(k-j)!}(n+j)! + \frac{1}{(k!)^{2}}(n+k)!.
\end{equation*}
Such a general formula can be derived for an arbitrary bivincular pattern.
\\
\item[2.] \emph{Descent drop}.\\
Recall that for the simple statistic $\drops=f_{\underline{P},Q}$, $\underline{P}=(21,\{1\},\emptyset)$ and $Q(s,w)=Q_{1}(s)Q_{2}(w)$, where $Q_{1}(s)=Q_{1}(t_{1},t_{2})=t_{2}-t_{1}$ and $Q_{2}(w)=1$. We have
\begin{equation*}
M(\drops,n)=-\frac{1}{2}(n+1)!+\frac{1}{6}(n+2)!.
\end{equation*}

\item[3.] \emph{Sum of peak squares}.\\
Recall that the statistic $\peakSqSum$ is a sum of the two simple statistics corresponding to the patterns $\underline{P}=(132,\{1,2\},\emptyset)$ and $\underline{P}=(231,\{1,2\},\emptyset)$, where the valuation polynomials for both statistics are $Q(s,w)=Q_{1}(s)Q_{2}(w)$ with $Q_{1}(s)=Q_{1}(t_{1},t_{2},t_{3})=t_{3}^{2}$ and $Q_{2}(w)=1$. We have
\begin{equation*}
M(\peakSqSum,n)=(n+1)!-\frac{5}{4}(n+2)!+\frac{1}{5}(n+3)!.
\end{equation*}

\end{itemize}

 \section{Higher moments of simple statistics}
 \label{sec:hMoments}
Our next goal is to show that the higher moments of statistics are also statistics. In order to investigate the higher moments, we will need to look at ordered tuples of occurrence of a given pattern. To do that, we will first define a \emph{merge} of two patterns, as done originally in \cite{partitions} for set partitions. In the definition given below, $g(S)\coloneqq \{g(x) \mid x\in S\}$, where $g$ is a function and $S$ is a set.

\begin{definition}[Merge of patterns]\label{def-merge}
Given are three patterns $\underline{P_1} = (x,\boldsymbol{C}(\underline{P_1}),\boldsymbol{D}(\underline{P_1}))$,\\ $\underline{P_2} = (y,\boldsymbol{C}(\underline{P_2}),\boldsymbol{D}(\underline{P_2}))$ and $\underline{P_3} = (z,\boldsymbol{C}(\underline{P_3}),\boldsymbol{D}(\underline{P_3}))$ of sizes $k_{1},k_{2}$ and $k_{3}$, respectively. A \emph{merge} of $\underline{P_1}$ and $\underline{P_2}$ onto $\underline{P_3}$ is a pair of increasing functions $m_{1}\mathbin{:}  [k_{1}]\to [k_{3}]$ and $m_{2}\mathbin{:} [k_{2}]\to [k_{3}]$, such that 
\begin{enumerate}
    \item $m_1([k_1])\cup m_2([k_2])= [k_3]$.
    \item for every $i,j\in[k_1]$, $(m_1(i),m_1(j))\in A(z)$ if and only if $(i,j)\in A(x)$ and for every $i,j\in[k_2]$, $(m_2(i),m_2(j))\in A(z)$ if and only if $(i,j)\in A(y)$.
    \item for every $j \in \boldsymbol{C}(\underline{P_1})$, $z^{-1}(m_{1}(x_{j+1})) = z^{-1}(m_{1}(x_{j}))+1$ and for every $j \in \boldsymbol{C}(\underline{P_2})$, $z^{-1}(m_{2}(y_{j+1})) = z^{-1}(m_{2}(y_{j}))+1$. In addition, 
   \[
    \boldsymbol{C}(\underline{P_3}) = \{z^{-1}(m_{1}(x_{j}))\mid j \in \boldsymbol{C}(\underline{P_1})\}\cup \{z^{-1}(m_{2}(y_{j}))\mid j \in \boldsymbol{C}(\underline{P_2})\}.
    \]
     \item for every $j \in \boldsymbol{D}(\underline{P_1})$, $m_{1}(j+1) = m_{1}(j)+1$ and for every $j \in \boldsymbol{D}(\underline{P_2})$, $m_2(j+1)=m_2(j)+1$. In addition,
     \[
     \boldsymbol{D}(\underline{P_3}) = \{m_1(j)\mid j\in \boldsymbol{D}(\underline{P_1})\} \cup \{m_2(j)\mid j\in \boldsymbol{D}(\underline{P_2}) \}.
     \]
\end{enumerate}
A merge will be denoted by $m_1, m_2\mathbin{:} \underline{P_1}, \underline{P_2}\rightarrow \underline{P_3}$.
\end{definition}

\begin{example} \leavevmode \newline
Let
$\underline{P_1}=(132,\{1\},\{2\})$,
$\underline{P_2}=(21,\emptyset,\emptyset)$ and
$\underline{P_3}=(2143,\{2\},\{3\})$.\\
Define the increasing functions $m_1$ and $m_2$ as follows: \\
$m_{1}(1) = 1$, $m_{1}(2) = 3$, $m_{1}(3) = 4$ \\
$m_{2}(1) = 1$, $m_{1}(2) = 2$
\end{example}

Note that for a merge, the pattern $\underline{P_{3}}$ is not uniquely defined by the functions $m_1$,$m_2$ and the patterns $\underline{P_1}$, $\underline{P_2}$. For instance, assume that $\underline{P_1}=321$,
$\underline{P_2}=21$ and 
$m_{1}(1) = 1$, $m_{1}(2) = 2$, $m_{1}(3) = 4$,
$m_{2}(1) = 3$, $m_{1}(2) = 4$. Then, $\underline{P_3}$ can be $4321$, $4231$ or $4213$. 

\begin{lemma}\label{lemma:higherMoments}
Let $\underline{P_1}$ and $\underline{P_2}$ be two patterns. For any $ \sigma \in S_n$, there is a one-to-one correspondence between the following sets.
 $$\{(s_1,s_2)\mathbin{:} s_1 \in_{\underline{P_{1}}} \sigma,s_2 \in_{\underline{P_{2}}} \sigma\}\leftrightarrow \{s_3 \in_{P_3} \sigma \mid m_1,m_2 \mathbin{:}\underline{P_1},\underline{P_2} \rightarrow \underline{P_3}\}$$.
\end{lemma}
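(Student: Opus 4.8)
The plan is to exhibit an explicit bijection, reading the right-hand set as the collection of triples consisting of a target pattern $\underline{P_3}$, a merge $m_1,m_2:\underline{P_1},\underline{P_2}\to\underline{P_3}$, and an occurrence $s_3\in_{\underline{P_3}}\sigma$. The guiding idea is that a pair of occurrences is recorded losslessly by the sorted tuple of values they jointly use, together with the data of how each of the two occurrences embeds into that tuple. Concretely, given $s_1=(a_1,\ldots,a_{k_1})\in_{\underline{P_1}}\sigma$ and $s_2=(b_1,\ldots,b_{k_2})\in_{\underline{P_2}}\sigma$, I let $s_3=(c_1,\ldots,c_{k_3})$ be the increasing tuple whose entries are the distinct values of $\{a_1,\ldots,a_{k_1}\}\cup\{b_1,\ldots,b_{k_2}\}$, and I define $m_1,m_2$ by $c_{m_1(i)}=a_i$ and $c_{m_2(i)}=b_i$. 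Both are increasing because the three tuples are sorted, and condition (1) of Definition \ref{def-merge} holds since $s_3$ lists exactly the union. I take $z=P_3$ to be the reduction of the subsequence of $\sigma$ formed by $c_1,\ldots,c_{k_3}$, and I define $\boldsymbol{C}(\underline{P_3})$ and $\boldsymbol{D}(\underline{P_3})$ to be the right-hand sides of the identities in conditions (3) and (4). This produces the candidate target and candidate merge.

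The substance of the argument is checking that this data is legitimate, i.e.\ that $m_1,m_2$ really is a merge and that $s_3\in_{\underline{P_3}}\sigma$. Condition (2) and occurrence-condition (b) for $s_3$ are both immediate, since the $A$-relations among $c_1,\ldots,c_{k_3}$ are those inherited from $\sigma$ and restrict correctly to the values of $s_1$ and of $s_2$. The two consecutiveness constraints require a little care. For $\boldsymbol{D}$: if $j\in\boldsymbol{D}(\underline{P_1})$ then occurrence-condition (d) for $s_1$ gives $a_{j+1}=a_j+1$, so no value lies strictly between them and they are adjacent in $s_3$; hence $m_1(j+1)=m_1(j)+1$ (this is condition (4)) and $c_{m_1(j)+1}=c_{m_1(j)}+1$, which is occurrence-condition (d) for $s_3$ at the index $m_1(j)\in\boldsymbol{D}(\underline{P_3})$. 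For $\boldsymbol{C}$: if $j\in\boldsymbol{C}(\underline{P_1})$ then occurrence-condition (c) for $s_1$ places the values $a_{x_j}$ and $a_{x_{j+1}}$ at adjacent positions of $\sigma$, so no other $c_i$ can fall positionally between them; this forces $a_{x_{j+1}}$ to be the immediate successor of $a_{x_j}$ in $\sigma$-position order among the $c_i$, which by the definition of $z$ reads as $z^{-1}(m_1(x_{j+1}))=z^{-1}(m_1(x_j))+1$ (condition (3)), and substituting $c_{m_1(x_j)}=a_{x_j}$ and $c_{m_1(x_{j+1})}=a_{x_{j+1}}$ back in gives occurrence-condition (c) for $s_3$ at the index $z^{-1}(m_1(x_j))\in\boldsymbol{C}(\underline{P_3})$. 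The identical computations through $m_2$ handle $\underline{P_2}$.

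For the inverse I would go the other way: given $\underline{P_3}$, a merge $m_1,m_2:\underline{P_1},\underline{P_2}\to\underline{P_3}$, and $s_3=(c_1,\ldots,c_{k_3})\in_{\underline{P_3}}\sigma$, set $s_1=(c_{m_1(1)},\ldots,c_{m_1(k_1)})$ and $s_2=(c_{m_2(1)},\ldots,c_{m_2(k_2)})$. These are increasing since $m_1,m_2$ are, and occurrence-conditions (b), (c), (d) for $s_1\in_{\underline{P_1}}\sigma$ and $s_2\in_{\underline{P_2}}\sigma$ come from reading merge-conditions (2), (3), (4) in reverse, i.e.\ running the displayed computations backwards. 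That the two maps are mutually inverse is then formal: $c_{m_1(i)}=a_i$ and $c_{m_2(i)}=b_i$ give one composite, while condition (1) guarantees that the values recovered in $s_1,s_2$ exhaust $\{c_1,\ldots,c_{k_3}\}$, so the reconstructed sorted union is again $s_3$, and $z$ together with $\boldsymbol{C}(\underline{P_3})$ and $\boldsymbol{D}(\underline{P_3})$ are reproduced because the merge definition pins them down exactly from the unchanged data $P_1,P_2,m_1,m_2$. I expect the only real obstacle to be bookkeeping: cleanly juggling the three layers of indexing — positions within the patterns, ranks among the selected values, and positions within $\sigma$ — when translating merge-conditions (3) and (4) into occurrence-conditions (c) and (d), since those identities simultaneously involve $z$, $z^{-1}$, $m_1$, $m_2$ and $\sigma^{-1}$.
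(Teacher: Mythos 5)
Your proposal is correct and matches the paper's own proof essentially step for step: both construct the forward map by sorting the union of values, defining $m_1,m_2$ by rank-matching, taking $z$ as the reduction of the induced subsequence of $\sigma$ with $\boldsymbol{C}(\underline{P_3})$, $\boldsymbol{D}(\underline{P_3})$ given by the formulas in conditions (3) and (4), and both invert by restricting $s_3$ along $m_1,m_2$, with the same key observations that adjacency of positions in $\sigma$ and of values in $[n]$ is preserved under passing to the merged tuple. Your explicit check that the two maps are mutually inverse is slightly more careful than the paper, which leaves that step implicit, but the route is the same.
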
 
\vspace{-1cm}
\begin{proof}
Let $\underline{P_{1}}=(x,C(\underline{P_{1}}),D(\underline{P_{1}}))$ and $\underline{P_{2}}=(y,C(\underline{P_{2}}),D(\underline{P_{2}}))$.

$( \Longrightarrow )$ Assume that $s_{1}\in_{\underline{P_{1}}}$ and $s_{2}\in_{\underline{P_{2}}}$. Take the union of the elements of $s_1$ and $s_2$ and sort the elements of this union in increasing order. Let $s_{3}$ be the resulting increasing vector of numbers in $[n]$. As in the case of matchings and partitions, the maps $m_{a}$, for $a=1,2$, must be given by the unique function so that $m_{a}(i)=j$ if and only if the $i$-th smallest element of $s_{a}$ equals the $j$-th smallest element of $s_{3}$. If the elements of $s_{3}$ form the subsequence $\sigma_{i_{1}}\ldots \sigma_{i_{k_{3}}}$ in $\sigma$, then let $z = \red(\sigma_{i_{1}}\ldots \sigma_{i_{k_{3}}})$ and let $\underline{P_{3}} = (z,\boldsymbol{C}(\underline{P_{3}}),\boldsymbol{D}(\underline{P_{3}}))$, where $\boldsymbol{C}(\underline{P_3}) = \{z^{-1}(m_{1}(x_{j}))\mid j \in \boldsymbol{C}(\underline{P_1})\}\cup \{z^{-1}(m_{2}(y_{j}))\mid j \in \boldsymbol{C}(\underline{P_2})\}$ and $\boldsymbol{D}(\underline{P_3}) = \{m_1(j)\mid j\in \boldsymbol{D}(\underline{P_1})\} \cup \{m_2(j)\mid j\in \boldsymbol{D}(\underline{P_2}) \}$. 

We will show that $m_1,m_2 \mathbin{:}P_1,P_2 \rightarrow P_3$. One can easily verify that conditions (1) and (2) of Definition \ref{def-merge} hold. It remains to show that conditions (3) and (4) of the same definition also hold. We will do this just for $C(\underline{P_1})$ and $D(\underline{P_1})$ since one can proceed in the same way for $C(\underline{P_2})$ and $D(\underline{P_2})$. To check condition (3), it suffices to show that for every $j\in C(\underline{P_1})$, $z^{-1}(m_{1}(x_{j+1}))=z^{-1}(m_{1}(x_{j}))+1$. Indeed, the positions of the elements corresponding to $x_{j}$ and $x_{j+1}$ in every occurrence of $\underline{P_{1}}$, must be consecutive. Thus, since $s_{1}\in_{\underline{P_{1}}}\sigma$, the positions of $m_{1}(x_{j})$ and $m_{1}(x_{j+1})$ in $\sigma$, and consequently in $z$, must be consecutive, because $z$ is the reduction of $s_{3}$, which is the union of $s_1$ and $s_2$. Also, if $j\in D(\underline{P_1})$, then $t_{j+1}=t_{j}+1$, where $s_{1}=(t_{1},\ldots ,t_{k_{1}})$. Therefore, these two elements have consecutive values in $s_{3}$, as well, i.e., $m_1(j+1)=m_1(j)+1$. With that, we showed that $m_1,m_2 \mathbin{:}P_1,P_2 \rightarrow P_3$. Now, it is easy to check that $s_{3}\in_{\underline{P_{3}}}\sigma$.

$( \Longleftarrow )$ Let $s_{3}\in_{\underline{P_{3}}}\sigma$, where $s_{3}=(t_{1},t_{2}, \ldots , t_{k_{3}})$ is an increasing vector, $m_{1},m_{2}: \underline{P_{1}},\underline{P_{2}}\to \underline{P_{3}}$ and $\underline{P_{3}} = (z,\boldsymbol{C}(\underline{P_{3}}),\boldsymbol{D}(\underline{P_{3}}))$. Define $s_{1}\coloneqq t|_{m_{1},k_{1}}$ and $s_{2}\coloneqq t|_{m_{2},k_{2}}$, where $t|_{h,k} \coloneqq (t_{h(1)},t_{h(2)}, \ldots ,t_{h(k)})$. We must show that $t|_{m_{1},k_{1}}\in_{\underline{P_{1}}}\sigma$. One can similarly show that $t|_{m_{2},k_{2}}\in_{\underline{P_{2}}}\sigma$. Condition (2) of Definition \ref{def-merge} implies that the elements of $t|_{m_{1},k_{1}}$ are in the same relative order in $\sigma$ as the elements of $\underline{P_{1}}$. Now, assume that $j\in \boldsymbol{C}(\underline{P_{1}})$. We have to show that the positions of the elements $t_{m_{1}(x_{j})}$ and $t_{m_{1}(x_{j+1})}$ in $\sigma$ are consecutive. According to condition (3) of Definition \ref{def-merge}, $z(m_{1}(x_{j+1})) = z(m_{1}(x_{j}))+1$, i.e., $m_{1}(x_{j})$ and $m_{1}(x_{j+1})$ have consecutive positions in $z$ and $z^{-1}(m_{1}(x_{j}))\in  \boldsymbol{C}(\underline{P_3})$. Therefore, these positions must be also consecutive in $\sigma$ since $s_{3}\in_{\underline{P_{3}}}\sigma$. Finally, assume that $j\in \boldsymbol{D}(\underline{P_{1}})$. We have to show that $t_{m_{1}(j+1)}=t_{m_{1}(j)}+1$. According to condition (4) of Definition \ref{def-merge}, we must have that $m_{1}(j)\in \boldsymbol{D}(\underline{P_{3}})$ and $m_{1}(j+1)=m_{1}(j)+1$. Since $s_{3}\in_{\underline{P_{3}}}\sigma$, we have $t_{m_{1}(j)}+1 = t_{m_{1}(j)+1} = m_{1}(j+1)$.
\end{proof} 

Assume that $f$ is a simple statistic associated with the pattern $\underline{P_{1}}$ and valuation function $Q_{1}Q'_{1}$, whereas $g$ is a simple statistic associated with the pattern $\underline{P_{2}}$ and valuation function $Q_{2}Q'_{2}$.

Assume, also, that $m_1,m_2 \mathbin{:}\underline{P_1},\underline{P_2} \rightarrow \underline{P_3}$ for some $m_{1},m_{2}$ and $\underline{P_3}$. If $s_3=(t_1,t_2,\cdots, t_{k_3})\in_{\underline{P_{3}}}\sigma$ and $w_3=(\sigma^{-1}(t_1), \sigma^{-1}(t_2), \ldots, \sigma^{-1}(t_{k_3}))$, then let us define
\begin{align*}
 Q_{m_{1},m_{2},Q_{1},Q_{2}}(s_{3})\coloneqq Q_{1}(t|_{m_{1},k_{1}},n)Q_{2}(t|_{m_{2},k_{2}},n). 
   \end{align*}
   and 
   \begin{align*}
 Q'_{m_{1},m_{2},Q_{1},Q_{2}}(w_3)\coloneqq Q'_{1}(\sigma^{-1}(t|_{m_{1},k_{1}}),n)Q'_{2}(\sigma^{-1}(t|_{m_{2},k_{2}}),n).
   \end{align*}
   
\begin{theorem}\label{higher-moment} Let $\mathbbm{St}$ be the set of all permutation statistics thought
 of as functions $f \mathbin{:} \cup_{n}{S_n}\rightarrow \mathbb{Q}$.
 Then $\mathbbm{St}$ is closed under the operations of point-wise scaling, addition 
and multiplication. Thus, if $f$, $g$ $\in \mathbbm{St}$ and $a\in \mathbb{Q}$,
 then there exist permutation statistics $h_a$, $h_{+}$ and $h_{*}$ so that for all permutations $\sigma \in \mathbbm{St}$,
\begin{eqnarray*}
af(\sigma)&=&h_a(\sigma),\\
f(\sigma)+g(\sigma)&=&h_{+}(\sigma),\\
f(\sigma)g(\sigma)&=&h_{*}(\sigma).
\end{eqnarray*}
Furthermore, we have the following inequalities for the degrees:
$d(h_a)\leq d(f)$, $d(h_{+})\leq \max \{ d(f), d(g)\}$ and $d(h_{*})\leq d(f)+d(g)$.
\end{theorem}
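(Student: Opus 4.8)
The plan is to reduce the whole statement to the multiplicative case, since scaling and addition are bookkeeping at the level of the defining linear combinations. Fix a degree-optimal representation $f = \sum_{i} a_i f_{\underline{P_i}, Q_i}$. For scaling, $af = \sum_i (a a_i) f_{\underline{P_i}, Q_i}$ is a $\mathbb{Q}$-linear combination of the very same simple statistics, so $h_a := af$ lies in $\mathbbm{St}$ and $d(h_a) \leq d(f)$. For addition, I would concatenate degree-optimal representations of $f$ and $g$ into a single $\mathbb{Q}$-linear combination of simple statistics whose largest summand-degree is $\max\{d(f), d(g)\}$, yielding $h_+ := f+g \in \mathbbm{St}$ with $d(h_+) \leq \max\{d(f), d(g)\}$.

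The substance is multiplication. By $\mathbb{Q}$-bilinearity of the pointwise product it suffices to treat two simple statistics $f = f_{\underline{P_1}, Q_1 Q'_1}$ and $g = f_{\underline{P_2}, Q_2 Q'_2}$; the general case follows by expanding $f = \sum_i a_i f_i$ and $g = \sum_j b_j g_j$ in degree-optimal form and writing $fg = \sum_{i,j} a_i b_j (f_i g_j)$. For the simple case I would first expand $f(\sigma)g(\sigma)$ as the double sum $\sum_{s_1 \in_{\underline{P_1}} \sigma} \sum_{s_2 \in_{\underline{P_2}} \sigma} Q_1(s_1) Q'_1(\sigma^{-1}(s_1)) Q_2(s_2) Q'_2(\sigma^{-1}(s_2))$ over ordered pairs of occurrences, and then apply Lemma \ref{lemma:higherMoments} to reindex: the set of pairs $(s_1, s_2)$ is in bijection with the disjoint union, over all merges $m_1, m_2 : \underline{P_1}, \underline{P_2} \to \underline{P_3}$, of the occurrence sets $\{ s_3 \in_{\underline{P_3}} \sigma \}$.

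After reindexing, the plan is to regroup by merge. For a fixed triple $(m_1, m_2, \underline{P_3})$ the inner sum over $s_3 \in_{\underline{P_3}} \sigma$ is, by construction, exactly the simple statistic $f_{\underline{P_3}, Q''}$ whose valuation is $Q''(s_3, w_3) = Q_{m_1, m_2, Q_1, Q_2}(s_3) \, Q'_{m_1, m_2, Q_1, Q_2}(w_3)$, with the two factors being the polynomials defined just before the theorem. Here I would observe that substituting $y_i \mapsto t_{m_1(i)}$ (and likewise into $Q_2$) turns each $Q$ into a genuine polynomial in the coordinates of $s_3$ and in $n$, and similarly for the $Q'$ factors in the coordinates of $w_3 = \sigma^{-1}(s_3)$, so $Q''$ has the product form demanded of a valuation. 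Because $\max(k_1, k_2) \leq k_3 \leq k_1 + k_2$ and each admissible $k_3$ allows only finitely many increasing maps $m_1, m_2$ and finitely many target patterns $\underline{P_3}$, the sum over merges is finite, exhibiting $h_* := fg$ as a finite sum of simple statistics, hence a statistic.

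It remains to account for degrees. Substitution does not alter the degree of a valuation polynomial, so $\deg(Q'') = \deg(Q_1) + \deg(Q'_1) + \deg(Q_2) + \deg(Q'_2)$ and $d(f_{\underline{P_3}, Q''}) = 2k_3 + \deg(Q'')$; using $k_3 \leq k_1 + k_2$ this is at most $(2k_1 + \deg Q_1 + \deg Q'_1) + (2k_2 + \deg Q_2 + \deg Q'_2) = d(f) + d(g)$. Taking the maximum over the finitely many merges gives $d(h_*) \leq d(f) + d(g)$ for simple statistics, and the general bound follows since $d(f_i) \leq d(f)$ and $d(g_j) \leq d(g)$ in the chosen representations. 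I expect the main obstacle to be conceptual rather than computational: correctly invoking Lemma \ref{lemma:higherMoments} to trade pairs of occurrences for single occurrences of merged patterns, and verifying that each regrouped inner sum is genuinely a simple statistic with the asserted product valuation. Once that bijective reindexing is secured, the degree inequalities drop out immediately from $k_3 \leq k_1 + k_2$.
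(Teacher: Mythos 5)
Your proposal is correct and follows essentially the same route as the paper's proof: reduce scaling and multiplication to the case of simple statistics, expand the product as a double sum over ordered pairs of occurrences, reindex via Lemma \ref{lemma:higherMoments} as a finite sum over merges, and bound the degree via $k_3 \leq k_1 + k_2$. The only (harmless) differences are that you keep each merge $(m_1,m_2,\underline{P_3})$ as a separate simple statistic rather than grouping all merges onto a common $\underline{P_3}$ into one valuation $\tilde{Q}$ as the paper does --- which is, if anything, cleaner, since it makes the required product form of each valuation manifest --- and that you spell out the degree bookkeeping that the paper asserts follows "directly from the proof and the definitions."
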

\begin{proof}
The addition of two statistics is obviously a statistic by definition. Now, one can easily see that it suffices to show the existence of $h_{a}$ and $h_{*}$, when $f$ and $g$ are simple statistics. If $f$ corresponds to the pattern $\underline{P}$ and the valuation function $Q(s,w)=Q_1(s)Q_2(w)$, then let $h_{a}$ be the simple statistic corresponding to the same pattern $\underline{P}$ and valuation function $Q'(s,w)=aQ_1(s)Q_2(w)=Q'_1(s)Q_2(w)$. Clearly, $h_{a}$ is a statistic. To establish the fact that the product of two simple statistics is a statistic, we need Lemma \ref{lemma:higherMoments}. Let $f$ and $g$ have associated patterns $\underline{{P}_1}$, $\underline{{P}_2}$ and valuations functions $Q_1Q_1'$ and $Q_2Q_2'$, respectively. For any positive integer $n$, let $\sigma \in S_n$ and consider 
\begin{align*}
f_{\underline{{P}_1}, Q_1}(\sigma)g_{\underline{{P}_2}, Q_2}(\sigma) = \sum_{s_1\in _{\underline{{P}_1}} \sigma}Q_1(s_1) Q_1'(\sigma^{-1}(s_1))\sum_{s_2\in _{\underline{{P}_2}} \sigma}Q_2(s_2)Q_2'(\sigma^{-1}(s_2))\\
\overset{\text{\scalebox{0.5}{(by Lemma \ref{lemma:higherMoments})}}}= \sum_{\underline{P_{3}}}\left(\sum_{s_3\in_{\underline{P_3}}\sigma} \left(\sum_{m_1, m_2\mathbin{:}\underline{P_1}, \underline{P_2}\to \underline{P_3}} Q_{m_1,m_2,Q_1,Q_2}(s_3)Q_{m_1,m_2,Q_1,Q_2}'(\sigma^{-1}(s_3))\right)\right)
= \sum_{\underline{P_{3}}}f_{\underline{P_{3}},\tilde{Q}},
\end{align*}
where
$$\tilde{Q}(s_3) = \sum_{m_1, m_2\mathbin{:}\underline{P_1}, \underline{P_2}\to \underline{P_3}} Q_{m_1,m_2,Q_1,Q_2}(s_3)Q_{m_1,m_2,Q_1,Q_2}'(\sigma^{-1}(s_3))$$ 
for the fixed $\underline{P_{1}},\underline{P_{2}}$ and $\underline{P_{3}}$. We get that the product $fg$ is a finite sum of statistics and thus, it is a statistic itself. Indeed, this sum is finite since the number of patterns $\underline{P_{3}}$ that one can get as a merge of $\underline{P_{1}}$ and $\underline{P_{2}}$ is finite. Note that the bounds on the degrees of the statistics $h_{a}$, $h_{+}$ and $h_{*}$ follow directly from our proof and the definitions.
\end{proof}   
We will also need a generalization of Definition \ref{def-merge}. Let $\underline{P_1}$,$\underline{P_2}$, $\ldots$, $\underline{P_l}$ be $l$ patterns, where $k$ is the length of the pattern $\underline{P}$ and for each $i \in [l]$, $k_i$ is the length of the pattern $\underline{P_i}$. If we have the increasing functions $m_1\mathbin{:}[k_1]\to [k]$, $m_2\mathbin{:}[k_2]\to [k]$, $\ldots$ , $m_l\mathbin{:}[k_l]\to [k]$, then a \emph{merge} of  these $l$ patterns corresponding to the listed functions is denoted by $m_1,m_2, \ldots, m_l\mathbin{:} \underline{P_1}, \underline{P_2},\ldots,\underline{P_l}\to \underline{P}$ or by the shorthand $\mathcal{M}_{l}\mathbin{:}\Pi_{l}\to\underline{P}$. Similarly, for any $\sigma \in S_n$ one can establish an analogue of Lemma \ref{lemma:higherMoments}. We state this result without a proof. 
\begin{lemma}\label{r-moment version}
Assume that we have the $r$ patterns $(\underline{P_1},\boldsymbol{C}(\underline{P_1}), \boldsymbol{D}(\underline{P_1}))$, $(\underline{P_2},\boldsymbol{C}(\underline{P_2}), \boldsymbol{D}(\underline{P_2}))$, $\ldots$ , $(\underline{P_{r}}, \boldsymbol{C}(\underline{P_r}), \boldsymbol{D}(\underline{P_r}))$. There is a one-to-one correspondence between the following sets.
\begin{align*}
\{(s_1,s_2, \ldots, s_r)\mid s_1\in_{\underline{P_1}}\sigma, s_2\in_{\underline{P_2}}\sigma, \ldots, s_r\in_{\underline{P_r}}\sigma\}\\
\leftrightarrow \{s\in_{\underline{P}} \sigma \mid m_1,m_2,\ldots,m_r\mathbin{:} \underline{P_1},\underline, \ldots, \underline{P_r}\rightarrow P\}. 
\end{align*}
\end{lemma}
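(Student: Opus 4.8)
The plan is to generalize the proof of Lemma \ref{lemma:higherMoments} from two patterns to $r$ patterns essentially verbatim, exploiting the fact that each of the conditions (1)--(4) in the $r$-fold analogue of Definition \ref{def-merge} is indexed by a single source pattern $\underline{P_a}$, so the verifications decouple across $a \in [r]$. First I would make the generalized notion of merge precise: given increasing functions $m_a : [k_a] \to [k]$ for $a \in [r]$ and a pattern $\underline{P} = (z, \boldsymbol{C}(\underline{P}), \boldsymbol{D}(\underline{P}))$ of length $k$, the tuple $\mathcal{M}_r : \Pi_r \to \underline{P}$ is a merge when $\bigcup_{a} m_a([k_a]) = [k]$; for each $a$ the order condition $(m_a(i), m_a(j)) \in A(z) \Leftrightarrow (i,j) \in A(P_a)$ holds; and the consecutivity data satisfy the natural unions $\boldsymbol{C}(\underline{P}) = \bigcup_a \{ z^{-1}(m_a((P_a)_j)) : j \in \boldsymbol{C}(\underline{P_a}) \}$ and $\boldsymbol{D}(\underline{P}) = \bigcup_a \{ m_a(j) : j \in \boldsymbol{D}(\underline{P_a}) \}$, together with the two local consecutivity requirements of conditions (3) and (4).

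Next I would describe the forward map. Given $(s_1, \ldots, s_r)$ with $s_a \in_{\underline{P_a}} \sigma$, let $s$ be the increasing vector obtained by sorting the union of the entries of all the $s_a$, and let $m_a$ be the unique increasing injection sending the $i$-th smallest entry of $s_a$ to its rank among the entries of $s$. If $s$ reads off the subsequence $\sigma_{i_1} \cdots \sigma_{i_k}$ of $\sigma$, set $z = \red(\sigma_{i_1} \cdots \sigma_{i_k})$ and let $\boldsymbol{C}(\underline{P}), \boldsymbol{D}(\underline{P})$ be the prescribed unions above. Condition (1) holds by construction, and condition (2) holds because reduction preserves the relative order of subsequences; conditions (3) and (4) follow for each $a$ by exactly the argument used for $\underline{P_1}$ in the proof of Lemma \ref{lemma:higherMoments}, since the positions (resp.\ values) of consecutive entries singled out by $\boldsymbol{C}(\underline{P_a})$ (resp.\ $\boldsymbol{D}(\underline{P_a})$) stay consecutive after merging because $s_a$ is a sub-tuple of $s$. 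Hence $\mathcal{M}_r : \Pi_r \to \underline{P}$ is a merge and $s \in_{\underline{P}} \sigma$. For the inverse map, given $s = (t_1, \ldots, t_k) \in_{\underline{P}} \sigma$ together with a merge $\mathcal{M}_r : \Pi_r \to \underline{P}$, I would set $s_a := t|_{m_a, k_a} = (t_{m_a(1)}, \ldots, t_{m_a(k_a)})$ and verify $s_a \in_{\underline{P_a}} \sigma$ for each $a$, again repeating the single-index verification of Lemma \ref{lemma:higherMoments}: condition (2) supplies the correct relative order and conditions (3), (4) supply the required position- and value-consecutivity.

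That the two maps are mutually inverse follows because $m_a$ is defined precisely to mark the entries of $s_a$ inside $s$, so extracting $t|_{m_a, k_a}$ from the sorted union returns $s_a$; conversely, condition (1) guarantees that re-sorting the union of the extracted $s_a$ reproduces $s$, while $z$, $\boldsymbol{C}(\underline{P})$ and $\boldsymbol{D}(\underline{P})$ are forced by $\mathcal{M}_r$ and hence recovered. The one point deserving care, and the only genuine obstacle, is confirming that the pair is a true bijection rather than just a pair of well-defined maps: the subtlety is that $\underline{P}$ is not determined by the $m_a$ and the $\underline{P_a}$ alone, as the remark following Definition \ref{def-merge} illustrates, so one must check that the forward map outputs the pattern $\underline{P}$ with exactly the $\boldsymbol{C}(\underline{P}), \boldsymbol{D}(\underline{P})$ prescribed by the merge, and that this is the same $\underline{P}$ recovered on the return trip. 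Since these sets are defined as the fixed unions over $a$, no freedom remains and the round trip is forced.

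An alternative would be to argue by induction on $r$, merging $\underline{P_1}, \ldots, \underline{P_{r-1}}$ into an intermediate pattern and then applying the two-pattern Lemma \ref{lemma:higherMoments}; this, however, requires establishing an associativity property of merging that is messier to verify than the direct decoupled argument sketched above, so I would prefer the direct route.
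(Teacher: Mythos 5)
Your proposal is correct and takes exactly the approach the paper intends: the paper states Lemma \ref{r-moment version} without proof, explicitly remarking that one can establish it just as Lemma \ref{lemma:higherMoments}, and your argument is precisely that generalization, with the forward/backward maps and the observation that conditions (1)--(4) of the merge definition decouple across the $r$ source patterns. Your added care about the merged pattern $\underline{P}$ not being determined by the $m_a$ alone, and the round trip being forced by the prescribed unions for $\boldsymbol{C}(\underline{P})$ and $\boldsymbol{D}(\underline{P})$, is a faithful (and slightly more explicit) rendering of the same point in the paper's two-pattern proof.
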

Using this lemma, one can obtain analogously that the product of $r$ statistics of degrees $d_{1}, \ldots , d_{r}$ is a statistic of degree not more than $\sum_{j=1}^{r}d_{j}$. We use this observation to obtain the following result.

\begin{theorem}
\label{th:main}
Let $f$ be any statistic of degree $m$. Then, for any positive integer $r$, the $r$-th moment of $f$ is given by 
\begin{equation}\label{eq-higher_moment}
M(f^r,n)= \sum_{-I\leq i\leq J} \alpha_i(n+i)!,
\end{equation} 
where $I$ and $J$ are constants that satisfy $-I\geq \frac{-rm}{2}$, $J\leq mr$ and $n\geq I$, and the $\alpha _i$'s are rational constants.
\end{theorem}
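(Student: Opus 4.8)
The plan is to reduce Theorem \ref{th:main} to the results already established for single statistics, namely Theorem \ref{general-sum}, by first showing that $f^r$ is itself a statistic of controlled degree and then applying the aggregate formula to it. The key structural fact is already available: Theorem \ref{higher-moment} shows that $\mathbbm{St}$ is closed under multiplication with $d(h_*) \leq d(f) + d(g)$, and the $r$-fold generalization via Lemma \ref{r-moment version} gives that the product of $r$ statistics of degrees $d_1, \ldots, d_r$ is again a statistic of degree at most $\sum_{j=1}^r d_j$. Applying this with all factors equal to $f$, I obtain that $f^r$ is a statistic of degree at most $rm$. This is the conceptual heart of the argument; everything else is bookkeeping about the range of the factorial expansion.

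First I would invoke the $r$-fold merge (Lemma \ref{r-moment version}) to write $f^r$ explicitly as a finite $\mathbb{Q}$-linear combination of simple statistics $f_{\underline{P}, \tilde{Q}}$, where each target pattern $\underline{P}$ arises as a merge $\mathcal{M}_r : \Pi_r \to \underline{P}$ of $r$ copies of the patterns defining the simple-statistic summands of $f$. Each such merged pattern $\underline{P}$ has length $k$ lying between $\max_j k_j$ and $\sum_j k_j$, and the associated valuation polynomial $\tilde{Q}$ has degree bounded so that $d(f_{\underline{P}, \tilde{Q}}) \leq rm$. Because the number of merges is finite, $f^r$ is a genuine statistic of degree $m' \leq rm$, so Theorem \ref{general-sum} applies directly to $f^r$.

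Next I would track the endpoints of the factorial expansion. Theorem \ref{general-sum} gives $M(f^r, n) = \sum_{-L \leq i \leq m'} \alpha_i (n+i)!$ for $n \geq L$, where $L = \max_j k_j$ over all merged patterns and $m' = d(f^r)$. Setting $I \coloneqq L$ and $J \coloneqq m'$ yields Equation \eqref{eq-higher_moment}. The upper bound $J = m' \leq rm$ is immediate from the degree estimate above. For the lower bound $-I \geq -rm/2$, I would note that every merged pattern has length at most $\sum_j k_j$, and since each constituent pattern of $f$ has length $k_j$ with $2k_j \leq m$ (the degree of a simple statistic is at least twice its pattern length), the largest merged pattern has length at most $\sum_j k_j \leq rm/2$; hence $I = L \leq rm/2$, i.e. $-I \geq -rm/2$. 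The condition $n \geq I$ is exactly the threshold $n \geq L$ from Theorem \ref{general-sum}.

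The main obstacle I anticipate is the careful verification of the degree inequality $d(f^r) \leq rm$ through the merge construction, specifically confirming that the merged valuation polynomial $\tilde{Q}$ does not inflate the degree beyond the sum of the individual degrees. One must check that, under the compression/merge identifications, the product $Q_{m_1,\ldots,m_r} \cdot Q'_{m_1,\ldots,m_r}$ contributes degree exactly the sum of the contributing $\deg Q_i + \deg Q'_i$ while the pattern-length contribution $2k$ for the merged pattern never exceeds $\sum_j 2k_j$ — the latter because merges only identify positions, so $k \leq \sum_j k_j$. This is routine given the definitions but is where the bounds $-I \geq -rm/2$ and $J \leq rm$ are actually pinned down, so it deserves explicit care rather than being waved through.
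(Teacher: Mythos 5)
Your proposal is correct and follows essentially the same route as the paper: expand $f^r$ via the $r$-fold merge (Lemma \ref{r-moment version}) into a finite $\mathbb{Q}$-linear combination of simple statistics $f_{\underline{P^r},\tilde{Q}}$ of degree at most $rm$, then apply Theorem \ref{general-sum} to pin down the factorial expansion and its endpoints. Your explicit tracking of the lower endpoint via merged pattern lengths ($I \leq \sum_j k_j \leq rm/2$) is in fact slightly more careful than the paper's proof, which simply states the bound $L \leq \frac{m}{2}$ (evidently a typo for $\frac{rm}{2}$, the bound matching the theorem statement).
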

\begin{proof}
Let $f=\sum_{i=0}^{t}\beta_i f_{\underline{P_i},Q_i}$. We have 
\begin{equation}
\label{eq:rMoments}
\resizebox{0.94\hsize}{!}{
$\begin{aligned}
    M(f^r,n) = \sum_{\sigma\in S_n}{\left(\sum_{i=1}^t \beta_i f_{\underline{P_i},Q_i}(\sigma)\right)}^r =
    \sum_{\sigma\in S_n}\sum_{\underline{P^{r}}}\gamma_{j}\left(\sum_{s\in_{\underline{P^r}}\sigma}\left(\sum_{\mathcal{M}_{r}\mathbin{:}\Pi_{r}\to \underline{P^r}}\prod_{i=1}^{r}Q_{i}(t\mid_{m_{i},k_{i}},\sigma^{-1}(t\mid_{m_{i},k_{i}}))\right)\right)\\
    \overset{\text{\scalebox{0.55}{(by Lemma \ref{r-moment version})}}}=\sum_{\sigma\in S_n}\sum_{\underline{P^{r}}}\gamma_{j}f_{\underline{P^{r}},\tilde{Q}}(\sigma) = \sum_{\underline{P^{r}}}\gamma_{j}M(f_{\underline{P^{r}},\tilde{Q}},n),
\end{aligned}$
}
\end{equation}
for some constants $\gamma_{j}\in \mathbb{Q}$.
Each of the statistics $f_{\underline{P^{r}},\tilde{Q}}$ is a summation of products of $r$ statistics, with each of them being of degree not more than $m$. Thus, $f_{\underline{P^{r}},\tilde{Q}}$ is a statistic of degree not more than $rm$, for every $\underline{P^{r}}$. Therefore, by Theorem \ref{general-sum}, we get
\begin{equation}
\label{eq:linearFormMain}
M(f^r,n)= \sum_{-L\leq i\leq rm} \alpha_i(n+i)!  
\end{equation}
where $L\leq \frac{m}{2}$.
\end{proof}
In order to establish Lemma \ref{lemma:mainVinc}, which is an important special case of Theorem \ref{th:main}, we will need the lemma below.
\begin{lemma}
\label{lemma:boundMerge}
Consider a merge of the vincular patterns $\underline{P_{1}} = (x,\boldsymbol{C}(\underline{P_{1}}))$ and $\underline{P_{2}} = (y,\boldsymbol{C}(\underline{P_{2}}))$ onto $\underline{P_{3}} = (z,\boldsymbol{C}(\underline{P_{3}}))$, where $x$, $y$ and $z$ are of lengths $k_1$, $k_2$ and $k_3$, respectively and the values of $|\boldsymbol{C}(\underline{P_{1}})|$, $|\boldsymbol{C}(\underline{P_{2}})|$ and $|\boldsymbol{C}(\underline{P_{3}})|$ are $c_1$, $c_2$ and $c_3$, respectively. Then, 
\[
k_{3}-c_{3}\leq (k_{1}+k_{2})-(c_{1}+c_{2}).
\]
\end{lemma}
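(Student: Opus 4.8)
The plan is to rephrase the claimed inequality $k_3 - c_3 \le (k_1+k_2) - (c_1+c_2)$ in the equivalent additive form $c_1 + c_2 - c_3 \le k_1 + k_2 - k_3$ and prove the latter. First I would pass from the value-rank maps $m_1, m_2$ to position maps on the patterns: define $\phi_1, \phi_2$ by $\phi_1(j) = z^{-1}(m_1(x_j))$ and $\phi_2(j) = z^{-1}(m_2(y_j))$, which send the positions of $\underline{P_1}$ and $\underline{P_2}$ to the positions of $\underline{P_3}$. Since each $m_a$, the permutations $x, y$, and the inverse $z^{-1}$ are all injective, each $\phi_a$ is injective; writing $B_1 = \phi_1([k_1])$ and $B_2 = \phi_2([k_2])$, we get $|B_1| = k_1$ and $|B_2| = k_2$, and condition (1) of Definition \ref{def-merge} together with the bijectivity of $z^{-1}$ and of $x, y$ gives $B_1 \cup B_2 = [k_3]$. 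Inclusion--exclusion then yields $|B_1 \cap B_2| = k_1 + k_2 - k_3$, which is exactly the right-hand side we are aiming for.

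Next I would analyze the constraint sets through condition (3). For each $j \in \boldsymbol{C}(\underline{P_1})$ that condition guarantees $\phi_1(j+1) = \phi_1(j) + 1$, so $\phi_1$ carries the adjacency at position $j$ of $\underline{P_1}$ to the adjacency at position $\phi_1(j)$ of $\underline{P_3}$; moreover the explicit formula for $\boldsymbol{C}(\underline{P_3})$ in condition (3) says precisely that $\boldsymbol{C}(\underline{P_3}) = G_1 \cup G_2$, where $G_a = \{\phi_a(j) \mid j \in \boldsymbol{C}(\underline{P_a})\} \subseteq [k_3-1]$. Injectivity of $\phi_a$ gives $|G_a| = c_a$, so $c_3 = |G_1 \cup G_2|$ and hence, by inclusion--exclusion, $c_1 + c_2 - c_3 = |G_1 \cap G_2|$.

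Finally I would bound $|G_1 \cap G_2|$. If $i \in G_1 \cap G_2$, then $i = \phi_1(j)$ and $i+1 = \phi_1(j+1)$ for some $j \in \boldsymbol{C}(\underline{P_1})$, so $i, i+1 \in B_1$; likewise $i, i+1 \in B_2$, and in particular $i \in B_1 \cap B_2$. The assignment $i \mapsto i$ is therefore an injection of $G_1 \cap G_2$ into $B_1 \cap B_2$, giving $|G_1 \cap G_2| \le |B_1 \cap B_2| = k_1 + k_2 - k_3$. Combining this with the previous paragraph yields $c_1 + c_2 - c_3 \le k_1 + k_2 - k_3$, which rearranges to the statement. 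The only genuinely delicate part is the first, bookkeeping step: transporting the adjacency constraints through the conjugation by $z$ and confirming that $c_3 = |G_1 \cup G_2|$ with $|G_a| = c_a$. Once that translation is in place, the inequality collapses to the one-line observation that consecutive-integer adjacencies shared by $\underline{P_1}$ and $\underline{P_2}$ must have both endpoints in the overlap $B_1 \cap B_2$ and are indexed injectively by their left endpoints.
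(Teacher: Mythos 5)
Your proof is correct and follows essentially the same route as the paper: both arguments apply inclusion--exclusion to get $k_1+k_2-k_3$ as the size of the overlap of the two images and $c_1+c_2-c_3$ as the size of the overlap of the two constraint sets, and then observe that the latter overlap is contained in the former. The only (cosmetic) difference is that you transport everything to position sets via $\phi_a = z^{-1}\circ m_a \circ x$ (resp.\ $y$), whereas the paper works directly with the value sets $m_1([k_1])$, $m_2([k_2])$ and $\{m_a(\cdot)\}$; since $z^{-1}$ is a bijection this changes nothing, and your version is in fact slightly more careful in distinguishing sets from their cardinalities.
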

\begin{proof}
Part (3) of Definition \ref{def-merge} allows us to write the following:
\begin{equation*}
\resizebox{0.93\hsize}{!}{
$\begin{aligned}
k_{3}-c_{3} = (k_{1}+k_{2}-(m_{1}([k_{1}])\cap m_{2}([k_{2}])))-(c_{1}+c_{2}-(\{m_{1}(x_{i})\mid i\in \boldsymbol{C}(\underline{P_{1}})\}\cap\{m_{2}(y_{j})\mid j\in \boldsymbol{C}(\underline{P_{2}})\})) = \\
(k_{1}+k_{2})-(c_{1}+c_{2}) - [(m_{1}([k_{1}])\cap m_{2}([k_{2}])) - (\{m_{1}(x_{i})\mid i\in \boldsymbol{C}(\underline{P_{1}})\}\cap\{m_{2}(y_{j})\mid j\in \boldsymbol{C}(\underline{P_{2}})\})].    
\end{aligned}
$}
\end{equation*}
Thus, it suffices to show that
\begin{equation*}
    (m_{1}([k_{1}])\cap m_{2}([k_{2}])) - (\{m_{1}(x_{i})\mid i\in \boldsymbol{C}(\underline{P_{1}})\}\cap\{m_{2}(y_{j})\mid j\in \boldsymbol{C}(\underline{P_{2}})\})\geq 0,
\end{equation*}
but the latter is clearly true since $\boldsymbol{C}(\underline{P_{1}})$ and $\boldsymbol{C}(\underline{P_{2}})$ are subsets of $[k_{1}]$ and $[k_{2}]$, respectively.
\end{proof}

\begin{theorem}
\label{lemma:mainVinc}
If $\underline{P}$ is a vincular pattern of length $k$, such that $|\boldsymbol{C}(\underline{P})|=c$, then 
\begin{equation}
\label{eq:linFormVinc}
M(\cnt_{\underline{P}}^r,n)= \sum_{0\leq i\leq r(k-c)} \alpha_i(n+i)!,
\end{equation}
 for $n\geq rk$.
\end{theorem}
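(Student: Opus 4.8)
The plan is to specialize the expansion used in the proof of Theorem \ref{th:main} to the counting statistic $\cnt_{\underline{P}}$ (so that the valuation polynomial is identically $1$) and then to exploit two features that are invisible in the general argument: the absence of value constraints ($\boldsymbol{D}(\underline{P})=\emptyset$) and the absence of a polynomial weight. First I would apply Lemma \ref{r-moment version} with all $r$ factors equal to $\underline{P}$ to write
\begin{equation*}
M(\cnt_{\underline{P}}^r,n)=\sum_{\underline{P^{r}}} N(\underline{P^{r}})\, M(\cnt_{\underline{P^{r}}},n),
\end{equation*}
where the outer sum runs over the finitely many patterns $\underline{P^{r}}$ obtainable as a merge of $r$ copies of $\underline{P}$, and $N(\underline{P^{r}})$ is the number of such merges, a constant independent of $n$. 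Since $\boldsymbol{D}(\underline{P})=\emptyset$, part (4) of Definition \ref{def-merge} forces $\boldsymbol{D}(\underline{P^{r}})=\emptyset$, so every $\underline{P^{r}}$ is again a vincular pattern; I write $k_{3}$ for its length and $c_{3}=|\boldsymbol{C}(\underline{P^{r}})|$, noting $k_{3}\le rk$.

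The key refinement comes from evaluating each inner term explicitly. Because $\underline{P^{r}}$ is vincular with valuation $1$, the proof of Theorem \ref{th:simpleStat} gives $M(\cnt_{\underline{P^{r}}},n)=(n-k_{3})!\binom{n}{k_{3}}\binom{n-c_{3}}{k_{3}-c_{3}}$. I would then absorb the factorial into the first binomial, rewriting this as $\frac{n!}{k_{3}!}\binom{n-c_{3}}{k_{3}-c_{3}}$. The crucial observation is that, after factoring out $n!$ rather than $(n-k_{3})!$, what remains is a polynomial in $n$ of degree only $k_{3}-c_{3}$. Expanding this polynomial in the basis $\{(n+i)!/n!\}_{i\ge 0}$, each element of which is a monic polynomial of degree $i$, shows that $M(\cnt_{\underline{P^{r}}},n)$ is a linear combination of $(n+i)!$ with $0\le i\le k_{3}-c_{3}$; in particular no negative shifts occur, which accounts for the lower limit $0$ in \eqref{eq:linFormVinc}.

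It remains to bound the range uniformly over all merge patterns. Iterating Lemma \ref{lemma:boundMerge} along a realization of the $r$-fold merge as $r-1$ successive pairwise merges, or equivalently running the same set-counting inequality directly for $r$ sets, yields $k_{3}-c_{3}\le r(k-c)$ for every $\underline{P^{r}}$. Summing the expansions of the previous paragraph over all $\underline{P^{r}}$ with the constant weights $N(\underline{P^{r}})$ therefore produces a linear combination of $(n+i)!$ with $0\le i\le r(k-c)$, which is exactly \eqref{eq:linFormVinc}. The restriction $n\ge rk$ guarantees $n\ge k_{3}$ for every merge pattern, so each explicit formula $M(\cnt_{\underline{P^{r}}},n)=\frac{n!}{k_{3}!}\binom{n-c_{3}}{k_{3}-c_{3}}$ is valid rather than vacuously zero.

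The main obstacle is the uniform degree bound $k_{3}-c_{3}\le r(k-c)$: everything else is bookkeeping once the explicit evaluation is in hand, but this inequality is precisely what pins down the sharp upper index $r(k-c)$ and separates the result from the coarser bound $J\le mr=2kr$ of Theorem \ref{th:main}. I would handle it via Lemma \ref{lemma:boundMerge}, whose content is that the vincular positions of each merged copy lie inside that copy's image, so the non-vincular surplus $k-c$ behaves subadditively under merging.
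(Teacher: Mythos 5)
Your proposal is correct and follows essentially the same route as the paper: decompose $M(\cnt_{\underline{P}}^r,n)$ over merges of $r$ copies of $\underline{P}$ via Lemma \ref{r-moment version}, evaluate each $M(\cnt_{\underline{P^r}},n)$ explicitly as $\frac{n!}{k_3!}\binom{n-c_3}{k_3-c_3}$ (the paper cites Lemma \ref{lemma:ExpBivinc} for this), bound $k_3-c_3\leq r(k-c)$ by Lemma \ref{lemma:boundMerge}, and expand in the basis $(n+i)!$. Your explicit remark that the $r$-fold degree bound needs the pairwise lemma iterated (or the set-counting inequality run directly for $r$ sets) is a point the paper glosses over, but it is the same argument.
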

\begin{proof}
One can easily prove the following equality (Lemma \ref{lemma:ExpBivinc}, proved in the next section, gives a generalisation):
\[
M(\cnt_{\underline{P}},n) =\frac{\binom{n-c}{k-c}}{k!}n!.
\]
Since $\binom{n-c}{k-c}$ is a polynomial in $n$ of degree $k-c$, the statement of the lemma holds, when $r=1$. For bigger values of $r$, we can look at Equation \eqref{eq:rMoments} and plug in $t=1$, $\beta_{1}=1$ and $Q=1$ for all valuation functions $Q$, as well as $\underline{P_{1}}=\underline{P_{2}} = \ldots \underline{P_{r}} = \underline{P}$ . We will get that 
\begin{equation}
\label{eq:tmp}
M(\cnt_{\underline{P}}^{r},n) = \sum_{\underline{P^r}}\delta_{j}M(\cnt_{\underline{P^r}},n),
\end{equation}
where the summation is over all possible merges $\underline{P^r}$ of $r$ copies of $\underline{P}$ and where $\delta_j$ are some rational constants. Using Lemma \ref{lemma:boundMerge}, we can see that each of the patterns $\underline{P^r}=(z,\boldsymbol{C}(\underline{P^r}))$ is a vincular pattern with $|z|-|\boldsymbol{C}(\underline{P^r})|\leq r(k-c)$. Therefore, each of the aggregates $M(\cnt_{\underline{P^r}},n)$ can be written in the form, as in the right side of Equation \eqref{eq:linFormVinc}. After we substitute these forms in the right side of Equation \eqref{eq:tmp} and regroup, we see that the claim holds.
\end{proof}

Theorem \ref{th:main} and Theorem \ref{lemma:mainVinc} generalize a result of Zeilberger \cite[Main formula]{Z.I}. What he proved is that for any classical pattern $\underline{P}$ of length $k$, $\mathbb{E}(\cnt_{\underline{P}}^{r})$ is a polynomial of degree $rk$. In the same article, he used this observation to get the polynomials for the second and the third moments of the statistic $\cnt_{\underline{P}}$, for various classical patterns $\underline{P}$. To do that, he implemented a computer program that fits the actual values of this statistic for $0,1,\ldots , rk$ to a polynomial of degree $rk$. Below, we give explicit expressions for the second moment of some of the statistics introduced in Section \ref{sec:defs}. We use the same approach by fitting small values of these statistics to the right side of Theorem \ref{th:main} or Theorem \ref{lemma:mainVinc}, in order to find the coefficients $\alpha_{i}$.\\

\textbf{Examples (formulas for aggregates of higher moments):}
\begin{itemize}
\item[1.] Second moment of the double ascents.\\
\begin{equation*}
M(\cnt_{\underline{123}}^2,n)=-\frac{1}{12}n!-\frac{1}{15}(n+1)!+\frac{1}{36}(n+2)!.
\end{equation*}

\item[2.] Second moment of $\cnt_{\scaleto{\Vectorstack{1\overline{23} 3\underline{12}}}{16pt}}$.\\
\begin{equation*}
M(\cnt_{\scaleto{\Vectorstack{1\overline{23} 3\underline{12}}}{16pt}}^2,n)=\frac{1}{2}n!-\frac{9}{28}(n+1)!+\frac{29}{672}(n+2)!+\frac{11}{10080}(n+3)!-\frac{1}{45360}(n+4)!.
\end{equation*}
\end{itemize}

Several important simple statistics have unit valuation function associated to them, i.e., $Q(s,w)=1$. For these cases, we give the following important corollary from Theorem \ref{th:main}, which is an analogue of \cite[Proposition 3.5]{matchings} and will be substantially used in the next two sections. 
\begin{corollary}
\label{corr:main}
Let $\underline{P}$ be a pattern of length $k$ with $|\boldsymbol{C}(\underline{P})|=c$, $|\boldsymbol{D}(\underline{P})|=d$ and unit valuation function. Then,
\begin{equation}
\label{eq:corr}
M(\cnt_{\underline{P}}^{r},n) = \sum_{\tilde{k},\tilde{c},\tilde{d}}w^{(r)}_{\tilde{k},\tilde{c},\tilde{d}}\binom{n-\tilde{c}}{\tilde{k}-\tilde{c}}\binom{n-\tilde{d}}{\tilde{k}-\tilde{d}}(n-k)!,
\end{equation}
where $w^{(r)}_{\tilde{k},\tilde{c},\tilde{d}}$ is the number of ways to merge $r$ copies of $\underline{P}$ and get a pattern $\underline{P^{r}}$ of length $\tilde{k}$, with $|C(\underline{P^{r}})|=\tilde{c}$, $|D(\underline{P^{r}})|=\tilde{d}$ and where $k\leq \tilde{k}\leq rk$, $c\leq \tilde{c}\leq rc$ and $d\leq \tilde{d}\leq rd$.
\end{corollary}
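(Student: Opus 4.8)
The plan is to specialize the multiplication machinery of Theorem \ref{higher-moment} and Lemma \ref{r-moment version} to the case where all $r$ factors are copies of the single unit-valuation statistic $\cnt_{\underline{P}}$, and then to evaluate each resulting aggregate in closed form. First I would apply Lemma \ref{r-moment version} with $\underline{P_1}=\cdots=\underline{P_r}=\underline{P}$. Since all valuation functions equal $1$, for each fixed $\sigma\in S_n$ the product $\cnt_{\underline{P}}^r(\sigma)$ counts tuples $(s_1,\dots,s_r)$ of occurrences, and the lemma rewrites this as a sum over merged patterns $\underline{P^r}$ of the occurrence counts $\cnt_{\underline{P^r}}(\sigma)$, each weighted by the number of merge structures $\mathcal{M}_r\colon\Pi_r\to\underline{P^r}$ realizing it. The central observation is that this weight depends only on the combinatorial data of the merge, not on the occurrence or on $\sigma$; hence it factors out as a constant $w_{\underline{P^r}}$, giving $\cnt_{\underline{P}}^r(\sigma)=\sum_{\underline{P^r}} w_{\underline{P^r}}\,\cnt_{\underline{P^r}}(\sigma)$ and, after summing over $\sigma\in S_n$, the identity $M(\cnt_{\underline{P}}^r,n)=\sum_{\underline{P^r}} w_{\underline{P^r}}\,M(\cnt_{\underline{P^r}},n)$.

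Next I would evaluate $M(\cnt_{\underline{P^r}},n)$ for a single bivincular pattern with unit valuation. Revisiting the proof of Theorem \ref{th:simpleStat} with $Q_1=Q_2=1$, the aggregate collapses to $(n-\tilde{k})!\,|T'|\,|W'|$, where $\tilde{k}$ is the length of $\underline{P^r}$. The compression technique identifies $T'$ with $V_{n-\tilde{d},\tilde{k}-\tilde{d}}$ and $W'$ with $V_{n-\tilde{c},\tilde{k}-\tilde{c}}$, so that $|T'|=\binom{n-\tilde{d}}{\tilde{k}-\tilde{d}}$ and $|W'|=\binom{n-\tilde{c}}{\tilde{k}-\tilde{c}}$, where $\tilde{c}=|\boldsymbol{C}(\underline{P^r})|$ and $\tilde{d}=|\boldsymbol{D}(\underline{P^r})|$. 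This yields the clean product $M(\cnt_{\underline{P^r}},n)=\binom{n-\tilde{c}}{\tilde{k}-\tilde{c}}\binom{n-\tilde{d}}{\tilde{k}-\tilde{d}}(n-\tilde{k})!$, which is exactly the unit-valuation specialization recorded as Lemma \ref{lemma:ExpBivinc}.

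Substituting this back and collecting the merged patterns according to their parameters $(\tilde{k},\tilde{c},\tilde{d})$ produces the stated sum, with $w^{(r)}_{\tilde{k},\tilde{c},\tilde{d}}$ equal to the total number of merges of $r$ copies of $\underline{P}$ landing on a pattern with those parameters. The ranges follow directly from Definition \ref{def-merge}: part (1) forces $k\leq\tilde{k}\leq rk$, the common length of a merge of $r$ blocks of size $k$ being minimized when the blocks coincide and maximized when their images are disjoint; parts (3) and (4) express $\boldsymbol{C}(\underline{P^r})$ and $\boldsymbol{D}(\underline{P^r})$ as unions of $r$ injective images of $\boldsymbol{C}(\underline{P})$ and $\boldsymbol{D}(\underline{P})$, whence $c\leq\tilde{c}\leq rc$ and $d\leq\tilde{d}\leq rd$, the lower bounds coming from the image of a single copy and the upper bounds from a union of $r$ disjoint copies.

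I expect the point requiring most care to be the factoring-out of the merge multiplicity $w_{\underline{P^r}}$: one must check that the number of merge data realizing a given occurrence $s_3\in_{\underline{P^r}}\sigma$ is genuinely independent of the choice of $s_3$ and of $\sigma$, so that it can be pulled out of both the inner sum over occurrences and the outer sum over permutations. This is where the combinatorial nature of Definition \ref{def-merge}---that a merge is a structure on the abstract patterns rather than on any particular embedding into $\sigma$---does the work, and it is the analogue of the corresponding step in \cite[Proposition 3.5]{matchings}. The remaining verifications, namely the unit-valuation evaluation drawn from Theorem \ref{th:simpleStat} and the union bounds on $\tilde{c}$ and $\tilde{d}$, are routine.
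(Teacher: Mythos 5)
Your proposal is correct and follows essentially the same route as the paper: specialize the $r$-fold merge machinery (the paper plugs $t=1$, $\beta_1=1$, $Q=1$ and $\underline{P_1}=\cdots=\underline{P_r}=\underline{P}$ into Equation \eqref{eq:rMoments}) to write $M(\cnt_{\underline{P}}^r,n)$ as a constant-weighted sum of the aggregates $M(\cnt_{\underline{P^r}},n)$, and then evaluate each such aggregate as $\binom{n-\tilde{c}}{\tilde{k}-\tilde{c}}\binom{n-\tilde{d}}{\tilde{k}-\tilde{d}}(n-\tilde{k})!$ --- the paper cites Lemma \ref{lemma:ExpBivinc} for this step, while you re-derive the same formula from the proof of Theorem \ref{th:simpleStat} with $Q_1=Q_2=1$, which is the identical computation. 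Incidentally, your factor $(n-\tilde{k})!$ is the correct one: the $(n-k)!$ appearing in Equation \eqref{eq:corr} is evidently a typo for $(n-\tilde{k})!$, as the paper's own uses of the corollary (Theorems \ref{th:eulerMoments} and \ref{th:adj}) confirm.
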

\begin{proof}
Take Equation \eqref{eq:rMoments} in the proof of Theorem \ref{th:main} and plug in $t=1$, $\beta_{1}=1$, $Q=1$ for all valuation functions $Q$ and $\underline{P_{1}}=\underline{P_{2}} = \ldots \underline{P_{r}} = \underline{P}$. Then, $M(\cnt_{\underline{P}}^{r},n) = \sum_{\underline{P^r}}\gamma_{j}M(f_{\underline{P^r},\tilde{Q}},n)=\sum_{\underline{P^r}}\gamma_{j}'M(\cnt_{\underline{P^r}},n)$, for some rational constants $\gamma_{j}'$. In addition, use Lemma \ref{lemma:ExpBivinc} to get that
\[
M(\cnt_{\underline{P}},n) =\frac{\binom{n-\tilde{c}}{\tilde{k}-\tilde{c}}\binom{n-\tilde{d}}{\tilde{k}-\tilde{d}}}{n_{(\tilde{k})}}n! = \binom{n-\tilde{c}}{\tilde{k}-\tilde{c}}\binom{n-\tilde{d}}{\tilde{k}-\tilde{d}}(n-\tilde{k})!,
\]
for every pattern $\underline{P}$ of length $\tilde{k}$, with $|C(\underline{P})|=\tilde{c}$ and $|D(\underline{P})|=\tilde{d}$.
\end{proof}

\section{Descents and minimal descents. Explicit formulas for the higher moments.}
\label{sec:descents}
The results from the previous section can be used to obtain an explicit formula for the $r$-th moment of some permutation statistics. In this section, we illustrate how this can be done for the descents and the minimal descents statistics. We will use the following simple lemma.
\begin{lemma}
\label{lemma:ExpBivinc}
For any bivincular pattern $\underline{P}$ of length $k$, such that $|\boldsymbol{C}(\underline{P})|=c$ and $|\boldsymbol{D}(\underline{P})|=d$,
\[
\mathbb{E}(\cnt_{\underline{P}},n) =\frac{\binom{n-c}{k-c}\binom{n-d}{k-d}}{n_{(k)}}.
\]
\end{lemma}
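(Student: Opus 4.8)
The plan is to compute the aggregate $M(\cnt_{\underline{P}},n)=\sum_{\sigma\in S_n}\cnt_{\underline{P}}(\sigma)$ directly and then divide by $n!$ to get the expectation. The key tool is the counting of pairs $(s,w)$ of admissible value-vectors and position-vectors already developed in the proof of Theorem~\ref{th:simpleStat}. Recall that for the unit valuation function $Q=1$ we have
\begin{equation*}
M(\cnt_{\underline{P}},n)=\sum_{s\in T}\sum_{\substack{\sigma\in S_n\\ s\in_{\underline{P}}\sigma}}1=(n-k)!\,|T'|\,|W'|,
\end{equation*}
where $T'=\{s\in T\mid G(s)\neq\emptyset\}$ counts the admissible sets of values and $W'=\{w\in W\mid H(w)\neq\emptyset\}$ counts the admissible sets of positions, and where the factor $(n-k)!$ accounts for freely arranging the remaining $n-k$ entries once a valid value-position assignment is fixed. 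So the entire computation reduces to evaluating the two cardinalities $|T'|$ and $|W'|$.

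First I would count $|T'|$. By the compression technique used in the proof of Theorem~\ref{th:simpleStat}, the constraint imposed by $\boldsymbol{D}(\underline{P})$ (the $d$ pairs of consecutive \emph{values}) means that choosing an admissible value-set $s$ is equivalent to choosing an increasing vector in $V_{n-d,k-d}$, i.e. a $(k-d)$-subset of $[n-d]$. Hence $|T'|=\binom{n-d}{k-d}$. Symmetrically, I would count $|W'|$: the constraint imposed by $\boldsymbol{C}(\underline{P})$ (the $c$ pairs of consecutive \emph{positions}) together with the bijection $w\mapsto\overline{w}$ reduces the count of admissible position-sets to choosing an increasing vector in $V_{n-c,k-c}$, giving $|W'|=\binom{n-c}{k-c}$. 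Substituting these into the displayed formula yields
\begin{equation*}
M(\cnt_{\underline{P}},n)=\binom{n-c}{k-c}\binom{n-d}{k-d}(n-k)!.
\end{equation*}

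To finish, I would divide by $n!$ and rewrite $(n-k)!/n!=1/n_{(k)}$, where $n_{(k)}=n(n-1)\cdots(n-k+1)$ is the falling factorial, obtaining exactly
\begin{equation*}
\mathbb{E}(\cnt_{\underline{P}},n)=\frac{\binom{n-c}{k-c}\binom{n-d}{k-d}}{n_{(k)}},
\end{equation*}
as claimed. The main conceptual point — and the only step needing care — is the independence of the value-constraints from the position-constraints, namely the factorization $Z(s,w)\neq\emptyset\iff s\in T'\text{ and }w\in W'$ established in Theorem~\ref{th:simpleStat}: this is what lets $|T'|$ and $|W'|$ be counted separately and multiplied. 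Everything else is the routine bookkeeping of the two compression bijections, which I would invoke rather than re-derive.
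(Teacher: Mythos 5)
Your proof is correct, but it reaches the formula by a different route than the paper. The paper proves this lemma with a short, self-contained linearity-of-expectation argument: for each pair consisting of an admissible position-set (there are $|I|=\binom{n-c}{k-c}$ of them) and an admissible value-set (there are $|J|=\binom{n-d}{k-d}$), it introduces the indicator $X_{i,j}$ of the event that those values occupy those positions in the relative order prescribed by $P$, observes that $\mathbb{E}(X_{i,j})=\frac{1}{n_{(k)}}$ for a uniformly random permutation, and sums over all pairs. You instead specialize the machinery built inside the proof of Theorem~\ref{th:simpleStat}: the factorization $M(\cnt_{\underline{P}},n)=(n-k)!\,|T'|\,|W'|$ (valid here since $Q_1=Q_2=1$), the two compression bijections giving $|T'|=\binom{n-d}{k-d}$ and $|W'|=\binom{n-c}{k-c}$, and finally division by $n!$ together with $(n-k)!/n!=1/n_{(k)}$. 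The combinatorial substance is identical --- your $|W'|$ and $|T'|$ are the paper's $|I|$ and $|J|$, and the factor $(n-k)!/n!$ plays exactly the role of $\mathbb{E}(X_{i,j})$ --- but the packaging differs, and each has its merits. Your route avoids inventing any new argument and actually \emph{justifies} the two binomial counts via the already-established compression bijections, whereas the paper simply asserts $|I|$ and $|J|$; the mild cost is that you rely on facts proved inside the proof of Theorem~\ref{th:simpleStat} (the factorization and the equivalence $Z(s,w)\neq\emptyset \iff s\in T', w\in W'$) rather than on its statement, so a reader must re-enter that proof to verify them. There is no circularity in doing so, since Theorem~\ref{th:simpleStat} does not depend on this lemma. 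The paper's probabilistic argument, by contrast, buys brevity and independence from the aggregate machinery.
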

\begin{proof}
Let $I$ be the set of possible positions for an occurrence of $\underline{P}$ in a permutation of length $n$. Similarly, let $J$ be the set of possible values of the numbers in such an occurrence. By linearity of expectation, we have that
\[
\mathbb{E}(\cnt_{\underline{P}},n) = \sum_{i\in I,j\in J} X_{i,j},
\]
where the random variable $X_{i,j}\coloneqq 1,$ if the set of possible values with index $j$ are at the set of possible positions with index $i$, and these values are in the relative order determined by the permutation $P$. Otherwise, $X_{i,j}\coloneqq 0$. Note that when we choose a permutation of length $n$ at random, $\mathbb{E}(X_{i,j})=\frac{1}{n_{(k)}}$. Also, note that $|I|=\binom{n-c}{k-c}$ and $|J|=\binom{n-d}{k-d}$.
\end{proof}

Consider the statistic $\des = \cnt_{\underline{21}}$. It is well known that the number of permutations of length $n$ having $k$ descents is given by the \emph{Eulerian numbers} and the corresponding distribution is called \emph{Eulerian distribution}. A comprehensive source dedicated to Eulerian numbers is the book \cite{petersen}. Its preface and the notes at the end of Chapter 1 provide a good historical overview. A recent article by Hwang et al. gives a complicated recurrence relation as a way to calculate the higher moments of the Eulerian distribution and a family of other distributions with generating functions satisfying a similar relation (see \cite[Section 2.2]{hwang}). Below, we give a direct summation formula for the $r$-th moment of the Eulerian distribution.

\begin{theorem}
\label{th:eulerMoments}
Consider a random permutation of length $n$ and $r\geq 1$. Then,
\begin{equation*}
    \Scale[0.95]{\mathbb{E}(\des^{r}) = \sum\limits_{m=2}^{min(n,2r)}\sum\limits_{u=1}^{\lfloor \frac{m}{2}\rfloor}\left(\sum\limits_{w=0}^{m-u}(-1)^{w}\binom{m-u}{w}(m-u-w)^{r}\right)\left(\sum\limits_{\substack{q_{1}+\dots + q_{u}=m\\q_{i}\geq 2}}\binom{m}{q_{1},\dots ,q_{u}}\right)\frac{\binom{n-(m-u)}{u}}{m!}}.
\end{equation*}
\end{theorem}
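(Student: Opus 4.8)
The plan is to specialize Corollary \ref{corr:main} to the vincular pattern $\underline{21}=(21,\{1\},\emptyset)$, for which $\des=\cnt_{\underline{21}}$ with $k=2$, $c=1$ and $d=0$, and then to evaluate the combinatorial weights $w^{(r)}_{\tilde{k},\tilde{c},\tilde{d}}$ explicitly. Since $\boldsymbol{D}(\underline{21})=\emptyset$, every merge of $r$ copies of $\underline{21}$ again has empty value-constraint set, so only terms with $\tilde{d}=0$ survive and $\binom{n-\tilde{d}}{\tilde{k}-\tilde{d}}=\binom{n}{\tilde{k}}$. Writing $m=\tilde{k}$ for the length of the merged pattern and $u=\tilde{k}-\tilde{c}$, so that $\tilde{c}=m-u$, Corollary \ref{corr:main} gives $M(\cnt_{\underline{21}}^{r},n)=\sum_{m,u}w^{(r)}_{m,m-u,0}\binom{n-(m-u)}{u}\binom{n}{m}(n-m)!$. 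Dividing by $n!$ and using $\binom{n}{m}(n-m)!/n!=1/m!$ reduces the whole statement to proving that $w^{(r)}_{m,m-u,0}$ equals the product of the two inner parenthesized sums in the claimed formula, together with determining the ranges of $m$ and $u$.

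The core of the argument is thus to count $w^{(r)}_{m,m-u,0}$, the number of ways to merge $r$ copies of $\underline{21}$ into a (vincular) pattern $\underline{P^{r}}=(z,\boldsymbol{C}(\underline{P^{r}}))$ of length $m$ with $|\boldsymbol{C}(\underline{P^{r}})|=m-u$. First I would analyze the structure of such a target. The $m-u$ consecutive constraints partition the $m$ positions into $u$ connected blocks; since every position must be covered by some copy and each copy occupies a constrained adjacent pair, every block has size $q_j\ge 2$, yielding a composition $q_1+\dots+q_u=m$ with $q_i\ge 2$ (whence $1\le u\le\lfloor m/2\rfloor$). Within each block the entries of $z$ are forced to decrease, while across blocks no relation is imposed by $\boldsymbol{C}(\underline{P^{r}})$. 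Consequently, for a fixed composition the distinct admissible patterns $z$ are obtained by choosing which values land in which block and ordering each block decreasingly, giving exactly $\binom{m}{q_1,\dots,q_u}$ patterns; summing over compositions yields the factor $\sum_{q_1+\dots+q_u=m,\,q_i\ge 2}\binom{m}{q_1,\dots,q_u}$.

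Next I would count the merges onto a single fixed target. Each copy of $\underline{21}$ maps onto one of the $m-u$ constrained adjacent pairs (descent slots), and this choice determines the corresponding increasing merge map uniquely because the larger value must occupy the earlier position. The merge conditions of Definition \ref{def-merge}, in the $r$-fold form of Lemma \ref{r-moment version}, force $\boldsymbol{C}(\underline{P^{r}})$ to be exactly the union of the images of the copies' constraints; hence a family of copies realizes this target if and only if every one of the $m-u$ descent slots is used at least once, i.e.\ the assignment $[r]\to[m-u]$ is surjective. Surjectivity onto the slots also guarantees the covering condition $(1)$ of Definition \ref{def-merge}, since each block of size $\ge 2$ is covered by its own internal slots. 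By inclusion-exclusion the number of such surjections is $\sum_{w=0}^{m-u}(-1)^{w}\binom{m-u}{w}(m-u-w)^{r}$, which is independent of the particular target; multiplying the number of targets by this surjection count gives the claimed factorization of $w^{(r)}_{m,m-u,0}$.

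Finally I would assemble the pieces: substituting the factorization into the reduced sum and noting that $m$ ranges from $2$ (all copies collapsing onto one descent) to $\min(n,2r)$ (all copies disjoint, capped by $n$), while terms with $m-u>r$ contribute nothing automatically through the vanishing of the surjection count, produces the displayed formula. The main obstacle is precisely the evaluation of $w^{(r)}_{m,m-u,0}$, specifically verifying (a) that the block sizes are necessarily $\ge 2$, (b) that covering all consecutive constraints is equivalent to the merge covering condition, so that the merge count is a clean surjection count, and (c) that the number of targets for a fixed block composition is exactly the multinomial coefficient, with \emph{no} correction for maximality needed because the block decomposition is recorded by $\boldsymbol{C}(\underline{P^{r}})$ rather than by the descent set of $z$.
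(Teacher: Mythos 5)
Your proposal is correct and follows essentially the same route as the paper's proof: specialize Corollary \ref{corr:main} to $\underline{21}$, decompose each merged pattern into $u$ decreasing segments of sizes $q_i\geq 2$ (giving the multinomial sum over compositions), count merges onto a fixed target as surjections of the $r$ copies onto the $m-u$ descent slots via inclusion-exclusion, and evaluate each aggregate $M(\cnt_{\underline{P^r}},n)=\frac{\binom{n-(m-u)}{u}}{m!}n!$ by Lemma \ref{lemma:ExpBivinc}. Your write-up is in fact somewhat more explicit than the paper's on the points it glosses over (why block sizes are forced to be at least $2$, why surjectivity onto slots is equivalent to the merge conditions, and why no maximality correction is needed since $\boldsymbol{C}(\underline{P^r})$ is part of the pattern data), but the underlying argument is the same.
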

\begin{proof}
Use Corollary \ref{corr:main} and note that for $\underline{P}=\underline{21}$, $d=0$ and $c=1$. 
Let us find the numbers $w^{(r)}_{\tilde{k},\tilde{c}}$ for the pattern $\underline{21}$. We will need to sum over all possible merges $\underline{P^{r}}$ depending on their length $\tilde{k}$ and the value $\tilde{c}$ of $|\boldsymbol{C}(\underline{P^{r}})|$. Instead of $\tilde{k}$, we will write $m$. Any of the patterns $\underline{P^{r}}$ can have between $m=2$ and $m=2r$ letters. For a fixed $m$, any such pattern can be comprised of $u$ segments of consecutive letters, where $1\leq u \leq \lceil \frac{m}{2}\rceil$. For example, $q=\underline{43}\, \underline{61}\, \underline{752}$ has length $m=7$ and is comprised of three segments of consecutive letters, namely $43$, $61$ and $752$. Note that getting a pattern $\underline{P}$ with $u$ segments requires merging at least $m-u$ copies of the pattern $\underline{21}$ since a segment of length $h$ requires merging at least $h-1$ copies of $\underline{21}$. For instance, the segment $\underline{752}$ in the pattern $q$ above can be obtained after merging multiple copies of $\underline{21}$, corresponding either to $\underline{75}$ or to $\underline{52}$ and at least one copy corresponding to each of them. In general, for a merge with $u$ segments, each of the $r$ copies of the descent pattern \underline{21} must correspond to one out of $m-u$ pairs of consecutive elements and we must have at least one copy for each of these pairs. The inclusion-exclusion principle gives us $\sum\limits_{w=0}^{m-u}(-1)^{w}\binom{m-u}{w}(m-u-w)^{r}$ ways to achieve that. In addition, every segment must be a decreasing sequence of elements. If the lengths of the segments in the pattern $\underline{P^{r}}$ are denoted by $q_{1},\ldots ,q_{u}$, then we must have $q_{1}+ \cdots + q_{u} = m$ and $q_{i}\geq 2$ for each $1\leq i\leq u$. Thus, for every such composition of $m$, we can choose the numbers in each of the segments in $\binom{m}{q_{1},\dots ,q_{u}}$ ways. Finally, for every pattern $\underline{P^{r}}$ with $u$ segments, $|\boldsymbol{C}(\underline{P^{r}})| = m-u$. Therefore $w^{(r)}_{\tilde{k},\tilde{c}}=w^{(r)}_{m,m-u} = \sum\limits_{u=1}^{\lfloor \frac{m}{2}\rfloor}\left(\sum\limits_{w=0}^{m-u}(-1)^{w}\binom{m-u}{w}(m-u-w)^{r})\right)\sum\limits_{\substack{q_{1}+\dots + q_{u}=m\\q_{i}\geq 2}}\binom{m}{q_{1},\dots ,q_{u}}$ and $M(\cnt_{\underline{P^{r}}},n) = \frac{\binom{n-(m-u)}{u}}{m!}n!$, by Lemma \ref{lemma:ExpBivinc}. Our goal is to find $E(\des^{r})$, so we are dividing both sides by $n!$ to obtain the desired formula.
\end{proof}

Similarly, we can obtain the moments of the minimal descents statistic $\cnt_{\scaleto{\Vectorstack{\overline{12} \underline{21}}}{16pt}}$, i.e., a descent, such that the two numbers in it are consecutive. In the literature, this statistic is also known as \emph{adjacency} and we will denote it by $\adj$. The following Theorem will be used in Section \ref{subsec:bivinc}.

\begin{theorem}
\label{th:adj}
Consider a random permutation of length $n$ and $r\geq 1$. Then,
\begin{equation*}
    \Scale[1]{\mathbb{E}(\adj^{r}) = \sum\limits_{m=2}^{min(n,2r)}\sum\limits_{u=1}^{\lfloor \frac{m}{2}\rfloor}\left(\left(\sum\limits_{w=0}^{m-u}(-1)^{w}\binom{m-u}{w}(m-u-w)^{r}\right)\binom{m-u-1}{u-1}u!\frac{\binom{n-(m-u)}{u}^{2}}{n_{(m)}}\right)}.
\end{equation*}
\end{theorem}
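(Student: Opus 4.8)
The plan is to mirror the proof of Theorem \ref{th:eulerMoments}, now working with the bivincular pattern $\underline{P}=(21,\{1\},\{1\})$, for which $k=2$, $c=1$ and $d=1$, and to invoke Corollary \ref{corr:main}. The first step is to pin down the structure of an arbitrary merge $\underline{P^r}$ of $r$ copies of $\underline{P}$. Writing $\underline{P_i}=(21,\{1\},\{1\})$ for each copy, conditions (3) and (4) of Definition \ref{def-merge} force every increasing map $m_i$ to send $[2]$ to two consecutive value-indices (by condition (4), $m_i(2)=m_i(1)+1$) whose images occupy two consecutive positions (by condition (3)); since the underlying permutation is $21$, the larger value lands at the earlier position. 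Thus each copy contributes simultaneously one element to $\boldsymbol{C}(\underline{P^r})$ and one to $\boldsymbol{D}(\underline{P^r})$.

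Next I would prove the key structural claim: a merge of $r$ copies decomposes into $u$ \emph{blocks}, where each block is a maximal run of consecutive positions carrying a strictly decreasing run of consecutive integer values. Indeed, if the value-index pairs $\{v,v+1\}$ and $\{v+1,v+2\}$ are both linked by copies, chaining the position-consecutiveness forces $v+2,v+1,v$ to sit at three consecutive positions in decreasing order; iterating shows that a block of $q_i$ linked value-indices occupies $q_i$ consecutive positions as a decreasing run of $q_i$ consecutive integers. Hence the blocks simultaneously tile the positions and the values of $\underline{P^r}$ into $u$ intervals, and each internal link of a block is at once a position-adjacency and a value-adjacency, while no between-block pair is constrained. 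Writing $m=\tilde k$ for the length, this gives $\tilde c=\tilde d=m-u$, so Lemma \ref{lemma:ExpBivinc} yields $\mathbb{E}(\cnt_{\underline{P^r}})=\binom{n-(m-u)}{u}^2/n_{(m)}$, which already accounts for the squared binomial and the denominator $n_{(m)}$ in the claimed formula.

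It then remains to count $w^{(r)}_{m,m-u,m-u}$, the number of merges producing a pattern of length $m$ with $u$ blocks, and I expect this to factor into three independent choices. First, exactly as in the descent computation, each of the $r$ copies must be assigned to one of the $m-u$ internal links and every link must be used, giving $\sum_{w=0}^{m-u}(-1)^w\binom{m-u}{w}(m-u-w)^r$ surjections by inclusion-exclusion. Second, the position-block sizes form a composition of $m$ into $u$ parts each at least $2$, and there are $\binom{m-u-1}{u-1}$ such compositions. Third---and this is where the bivincular case departs from the vincular one---the values inside each block are no longer free to be chosen (as the multinomial $\binom{m}{q_1,\dots,q_u}$ of Theorem \ref{th:eulerMoments} would allow) but are forced to be consecutive integers; the only remaining freedom is the order in which the $u$ blocks occupy the $u$ value-intervals, contributing a factor $u!$. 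Multiplying the three factors and summing over $m$ and $u$, with $1\le u\le\lfloor m/2\rfloor$ and $2\le m\le\min(n,2r)$, produces the stated expression for $\mathbb{E}(\adj^r)$.

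The main obstacle is the structural claim of the second paragraph: establishing that the \emph{joint} position- and value-consecutiveness imposed by the pattern rigidly locks each block into a decreasing run of consecutive integers, and---crucially for the count---that this replaces the free value-assignment of the descent computation by the single factor $u!$. The delicate bookkeeping step is then to verify that the correspondence (composition, value-ordering) $\mapsto\underline{P^r}$ is a bijection onto the merged patterns with $u$ blocks, so that distinct data give distinct patterns and every merge is obtained exactly once; everything else is a direct transcription of the argument for $\des$.
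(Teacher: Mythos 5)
Your proposal is correct and follows essentially the same route as the paper's proof: mirror the descent computation via Corollary \ref{corr:main}, note that now $|\boldsymbol{C}(\underline{P^r})|=|\boldsymbol{D}(\underline{P^r})|=m-u$ so Lemma \ref{lemma:ExpBivinc} gives $\binom{n-(m-u)}{u}^{2}/n_{(m)}$, and replace the multinomial value-count by $\binom{m-u-1}{u-1}\,u!$ (compositions into parts $\geq 2$ times the vertical ordering of the blocks), with the surjection count unchanged. Your explicit structural argument that joint position- and value-consecutiveness rigidly locks each block into a decreasing run of consecutive integers is a detail the paper leaves implicit, so your write-up is, if anything, more complete.
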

\begin{proof}
Proceed as in the proof of the previous Theorem \ref{th:eulerMoments}. One difference is that now, for a pattern $\underline{P^r}$ of length $m$ with $u$ segments, the values of the numbers in each segment must be consecutive. Thus, instead of $\sum\limits_{\substack{q_{1}+\cdots + q_{u}=m\\q_{i}\geq 2}}\binom{m}{q_{1},\ldots ,q_{u}}$ possible ways to determine the numbers in a pattern with $u$ segments, we have just $u!$ such segments for every solution of $q_{1}+\cdots + q_{u}=m$, where $q_{i}\geq 2$. By using the stars and bars model, one can see that the number of these solutions is exactly $\binom{m-u-1}{u-1}$. In addition, one can see that $|\boldsymbol{C}(\underline{P^r})|=|\boldsymbol{D}(\underline{P^r})|=m-u$, for every pattern $\underline{P^r}$ with $u$ segments and therefore by Lemma \ref{lemma:ExpBivinc}, we get $M(\cnt_{\underline{P^{r}}},n) =\frac{\binom{n-(m-u)}{u}^{2}}{n_{(m)}}$.
\end{proof}

\section{Central limit theorems for $\cnt_{\underline{P}}$.}
\label{sec:CLTs}
The normal distribution is frequently appearing in the context of combinatorial enumeration \cite[Chapter 3]{HEC}. A major reason is, of course, the central limit theorem, which gives us that under rather general circumstances, when independent random variables are added, their properly normalized sum converges in distribution to the normal distribution. Formally, a random variable $X$ is \emph{normally distributed} when 
\[
\mathbb{P}(X\leq x) = \frac{1}{\sqrt{2\pi}}\int_{-\infty}^{x}e^{-t^{2}/2}dt.
\]
In this section, we will reprove some limiting laws for the random variable $\cnt_{\underline{P}}$, which counts the number of occurrences of the pattern $\underline{P}$ in a given permutation. 

\subsection{Classical Patterns}
\label{sec:classical}
Recall that if $\boldsymbol{C}(\underline{P})$ and $\boldsymbol{D}(\underline{P})$ are empty, then $\underline{P}$ is a classical pattern. The limiting normality of $\cnt_{\underline{P}}$, when $\underline{P}$ is a classical pattern was first established by B\'{o}na \cite{bona}. He uses the method of \emph{dependency graphs} and the \emph{Janson dependency criterion}. This method is used when we have a set of partially dependent random variables, for every value of $n$, and we want to prove that the sum of these variables has a certain asymptotic distribution. To obtain a dependency graph for a set of random variables, we take a vertex for each variable and connect the dependent random variables by edges. We can construct a dependency graph for each value of $n$. The idea of the method is that if the degrees of the vertices in the obtained sequence of dependency graphs do not grow too fast, then the corresponding variables behave as if independent and their sum is asymptotically normal \cite{ferayDepGraphs}. Janson's criterion gives one sufficient condition for this asymptotic normality, quantifying that the degrees do not grow too quickly. A main fact that B\'{o}na uses when checking the criterion is a lower bound on the variance of $\cnt_{\underline{P}}$. In this subsection, we reprove this result by using Corollary \ref{corr:main} and Lemma \ref{lemma:classical} given below, which was established by Burstein and H\"{a}st\"{o} \cite{burstein}. This gives a new proof that $\cnt_{\underline{P}}$ has asymptotically normal distribution. We also provide a new interpretation of Lemma \ref{lemma:classical}.

Let $A_{\sigma}(r)$ denotes the set of possible merges of two copies of the pattern $\underline{\sigma}$, which is of length $k$, and where the resulting pattern is of length $r$. Formally, $A_{\sigma}(r)$ can be defined as the set of triples $(\pi, m_{1}, m_{2})$, such that $m_1,m_2 \mathbin{:}\underline{\sigma} ,\underline{\sigma} \rightarrow \underline{\pi}$ and $\pi\in S_{r}$. However, it will be more convenient for us to look at the subsequences of $\pi$ formed by the images of the functions $m_1$ and $m_2$, i.e., we will use the following equivalent definition.
\begin{definition}
For $\sigma\in S_{k}$, let 
\[
A_{\sigma}(r) \coloneqq \{(\underline{\pi}, x,y)\mid \pi\in S_{r},\text{ }x,y\in \subs(\pi),\text{ }\red(x) {=} \sigma,\text{ }\red(y){=}\sigma,\text{ } |x\cap y|=2r-k\},
\]
where $\subs(\pi)$ denotes the set of the subsequences of the permutation $\pi$.
\end{definition}

For instance, if $\underline{\sigma}$ is the classical pattern $312$, then $A_{312}(5)$ contains $(54213, 523, 413)$, since $\red(523)=312$, $\red(413)=312$ and these two subsequences have exactly one common element (see Figure 1). 
\begin{table}
    $
\begin{array}{|c|c|c|c|c|c|}
\hline
5 & 4 & 2 & 1 & 3\\ \hhline{|=|=|=|=|=|}
5 &   & 2 &   & 3\\ \hline
  & 4 &   & 1 & 3\\ \hline
\end{array}$
    \caption{Merge of two copies of the pattern $312$.}
    \end{table}
Let $a_{\sigma}(r)\coloneqq |A_{\sigma}(r)|$.
\begin{lemma}[{Burstein and H\"{a}st\"{o}, \cite[Lemma 4.3]{burstein}}]
\label{lemma:classical}
For any classical pattern $\sigma = \sigma_{1}\cdots\sigma_{k}$, 
\begin{equation}
\label{eq:burnstein}
a_{\sigma}(2k-1) > \binom{2k-1}{k}^{2}.    
\end{equation}
\end{lemma}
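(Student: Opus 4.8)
The plan is to reduce the inequality to a positive-definiteness statement about an explicit matrix. First I would reinterpret a triple $(\underline{\pi},x,y)\in A_\sigma(2k-1)$ as a permutation $\pi\in S_{2k-1}$ together with two occurrences of $\sigma$ that share exactly one element; since each occurrence uses $k$ positions and they overlap in one, their positions (and likewise their values) cover all of $[2k-1]$. Classifying these triples by the pair $(a,b)$, where the shared element is the $a$-th entry of $x$ and the $b$-th entry of $y$, I would observe the key decoupling: once $a,b$ are fixed, the shared position is forced to be $p^\ast=a+b-1$ and the shared value is forced to be $v^\ast=\sigma_a+\sigma_b-1$ (the $a-1$ positions of $x$ and the $b-1$ positions of $y$ lying left of $p^\ast$ must exactly fill the $p^\ast-1$ slots below it, and symmetrically for values via the ranks $\sigma_a,\sigma_b$). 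Counting the independent ways to distribute the remaining positions and the remaining values then yields the exact formula
\[
a_\sigma(2k-1)=\sum_{a=1}^{k}\sum_{b=1}^{k}F_{a,b}\,F_{\sigma_a,\sigma_b},\qquad F_{a,b}:=\binom{a+b-2}{a-1}\binom{2k-a-b}{k-a}.
\]

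Next I would record three facts about the $k\times k$ matrix $F$: it is symmetric; its entry $F_{a,b}$ equals the number of monotone lattice paths from $(0,0)$ to $(k-1,k-1)$ through $(a-1,b-1)$; and, by Vandermonde's convolution, every row sum and every column sum equals $M:=\binom{2k-1}{k}$. Writing $J$ for the all-ones matrix and $F_0:=F-\tfrac{M}{k}J$ (so $F_0$ is symmetric with vanishing row and column sums and $F_0\mathbf{1}=0$), and noting that $(\sigma_a,\sigma_b)$ ranges over $[k]^2$ as $(a,b)$ does, a short computation in which all cross terms drop out gives
\[
a_\sigma(2k-1)=M^2+\bigl\langle F_0,\,P_\sigma F_0 P_\sigma^{\mathsf T}\bigr\rangle,
\]
where $P_\sigma$ is the permutation matrix of $\sigma$ and $\langle\cdot,\cdot\rangle$ is the Frobenius inner product. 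Since $M^2=\binom{2k-1}{k}^2$, the lemma becomes equivalent to the strict positivity of the correction term $\langle F_0,P_\sigma F_0 P_\sigma^{\mathsf T}\rangle$.

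To finish, I would prove that $F$ is positive definite. The point is that $F=U\circ V$ is the Hadamard product of $U_{a,b}=\binom{(a-1)+(b-1)}{a-1}$ and $V_{a,b}=\binom{(k-a)+(k-b)}{k-a}$, each of which is a Pascal matrix: by Vandermonde, $U_{a,b}=\sum_m\binom{a-1}{m}\binom{b-1}{m}$ is the Gram matrix of the invertible lower-triangular binomial matrix, hence positive definite, and similarly for $V$. The Schur product theorem then gives that $F$ is positive definite, so $F_0$ is positive semidefinite with kernel exactly $\mathrm{span}(\mathbf{1})$. Diagonalizing $F_0=\sum_i\mu_i w_i w_i^{\mathsf T}$ with $\mu_i\ge 0$, one computes $\langle F_0,P_\sigma F_0 P_\sigma^{\mathsf T}\rangle=\sum_{i,j}\mu_i\mu_j\,(w_i^{\mathsf T}P_\sigma w_j)^2\ge 0$; since $P_\sigma$ fixes $\mathbf{1}$ and hence preserves the range $\mathbf{1}^{\perp}$ of $F_0$, for $k\ge 2$ it sends some positive-eigenvalue eigenvector to a vector with nonzero component on another, forcing strict positivity. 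Thus $a_\sigma(2k-1)>\binom{2k-1}{k}^2$.

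I expect the main obstacle to be the first step: pinning down the exact formula for $a_\sigma(2k-1)$, that is, justifying that fixing the roles $(a,b)$ rigidly determines the shared position and value and cleanly separates the positional and value counts. Once that combinatorial bookkeeping is in place, the algebra (centering by $J$) and the positive-definiteness argument (Hadamard product of two Pascal matrices together with the Schur product theorem) are routine.
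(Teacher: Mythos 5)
Your proof is correct (with the implicit restriction $k\ge 2$, which is genuinely needed: at $k=1$ one has $a_\sigma(1)=1=\binom{1}{1}^2$, so the inequality degenerates to equality), and it follows essentially the same algebraic route as the Burstein--H\"{a}st\"{o} proof \cite{burstein} that the paper cites and summarizes rather than reproves. Your identity $a_\sigma(2k-1)=\bigl\langle F,\,P_\sigma F P_\sigma^{\mathsf T}\bigr\rangle$ exhibits the count as the trace of a product of two symmetric positive-definite matrices (positive-definiteness supplied by your Pascal-matrix/Schur-product argument), and your centering by $\tfrac{M}{k}J$ is a repackaging of the facts the paper attributes to that proof, namely that $\binom{2k-1}{k}^{2}$ is one eigenvalue of the product (with eigenvector $\mathbf{1}$) while all remaining eigenvalues are positive.
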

\begin{example}
$k=2$ and $\sigma = 21$. Then, $\binom{2k-1}{k}^2 = 9$ and $a_{\sigma}(2k-1)=a_{21}(3)=10$, since $A_{21}(3)$ consists of the $10$ triples $(\pi, x,y)$ given below:

$\pi = 321$: $(321,32,31)$, $(321,31,32)$, $(321,32,21)$, $(321,21,32)$, $(321,31,21)$, $(321,21,31)$.

$\pi = 312$: $(312,31,32)$, $(312,32,31)$.

$\pi = 231$: $(231,21,31)$, $(231,31,21)$. 
\end{example}

Now, we are ready to prove the bound for the variance of $\cnt_{\underline{P}}$ used by B\'{o}na.
\begin{theorem}
\label{th:boundBona}
Let $X_{n}\coloneqq \cnt_{\sigma}$ be the number of occurrences of a classical pattern $\sigma = \sigma_{1}\cdots\sigma_{k}$ in a random permutation of length $n$. Then, there exists $c>0$, such that for all $n$,
\[
\Var(X_{n})\geq cn^{2k-1}.
\]
\end{theorem}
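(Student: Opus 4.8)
The plan is to show that $\Var(X_n)$ is a polynomial in $n$ of degree exactly $2k-1$ whose leading coefficient is positive, the positivity being precisely the content of Lemma \ref{lemma:classical}. Since a classical pattern has $c=d=0$, Corollary \ref{corr:main} with $r=2$ gives $M(\cnt_\sigma^2,n)=\sum_{\tilde k}a_\sigma(\tilde k)\binom{n}{\tilde k}^2(n-\tilde k)!$, where I identify the merge-count $w^{(2)}_{\tilde k,0,0}$ with $a_\sigma(\tilde k)$: every merge of two classical patterns is again classical, and $a_\sigma(\tilde k)$ counts exactly the merges of two copies of $\sigma$ onto a pattern of length $\tilde k$. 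Dividing by $n!$ and using $\binom{n}{\tilde k}^2(n-\tilde k)!/n!=n_{(\tilde k)}/(\tilde k!)^2$, I obtain
\[
\mathbb{E}(X_n^2)=\sum_{\tilde k=k}^{2k}\frac{a_\sigma(\tilde k)}{(\tilde k!)^2}\,n_{(\tilde k)},\qquad \mathbb{E}(X_n)=\frac{n_{(k)}}{(k!)^2},
\]
the second equality coming from the length-$k$ case of Lemma \ref{lemma:ExpBivinc}. Both $\mathbb{E}(X_n^2)$ and $\mathbb{E}(X_n)^2$ are then explicit polynomials in $n$ of degree $2k$.

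The first key step is to verify that the degree-$2k$ terms cancel in $\Var(X_n)=\mathbb{E}(X_n^2)-\mathbb{E}(X_n)^2$. The top term of $\mathbb{E}(X_n^2)$ comes from $\tilde k=2k$, i.e.\ from merges in which the two copies of $\sigma$ are disjoint; counting these directly (choose the $k$ positions and the $k$ values carried by the first copy, each in $\binom{2k}{k}$ ways) gives $a_\sigma(2k)=\binom{2k}{k}^2$. Its leading coefficient is therefore $\binom{2k}{k}^2/((2k)!)^2=1/(k!)^4$, which matches the leading coefficient $1/(k!)^4$ of $\mathbb{E}(X_n)^2=n_{(k)}^2/(k!)^4$. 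Hence $\Var(X_n)$ has degree at most $2k-1$.

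The second key step is to isolate the coefficient of $n^{2k-1}$, which receives three contributions: the leading term $a_\sigma(2k-1)/((2k-1)!)^2$ of the $\tilde k=2k-1$ summand, together with the subleading terms of the $\tilde k=2k$ summand and of $\mathbb{E}(X_n)^2$. Using $n_{(2k)}=n^{2k}-k(2k-1)n^{2k-1}+\cdots$ and $n_{(k)}^2=n^{2k}-k(k-1)n^{2k-1}+\cdots$, the last two contributions combine to $-\tfrac{k^2}{(k!)^4}\,n^{2k-1}$, and a short manipulation shows $k^2/(k!)^4=\binom{2k-1}{k}^2/((2k-1)!)^2$. Thus the coefficient of $n^{2k-1}$ in $\Var(X_n)$ equals
\[
\frac{a_\sigma(2k-1)-\binom{2k-1}{k}^2}{((2k-1)!)^2},
\]
which is strictly positive by Lemma \ref{lemma:classical}. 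Consequently $\Var(X_n)$ is a polynomial of degree exactly $2k-1$ with positive leading coefficient, and the bound $\Var(X_n)\geq cn^{2k-1}$ follows for a suitable $c>0$ and all sufficiently large $n$, which is what the central limit argument requires.

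I expect the main obstacle to be the exact cancellation of the $n^{2k}$ terms: everything hinges on the identity $a_\sigma(2k)=\binom{2k}{k}^2$ agreeing with the square of the mean's leading coefficient, and on keeping the two $n^{2k-1}$ subleading coefficients precise enough that the threshold $\binom{2k-1}{k}^2$ of Lemma \ref{lemma:classical} emerges \emph{exactly}. Once this bookkeeping is pinned down, the Burstein--H\"ast\"o inequality supplies the sign of the leading coefficient and the theorem follows at once.
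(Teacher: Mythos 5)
Your proposal is correct and follows essentially the same route as the paper's proof: expand $\mathbb{E}(X_n^2)$ via Corollary \ref{corr:main} in terms of the merge counts $a_\sigma(\tilde k)$, verify the cancellation of the $n^{2k}$ terms using $a_\sigma(2k)=\binom{2k}{k}^2$, and reduce positivity of the $n^{2k-1}$ coefficient exactly to the Burstein--H\"ast\"o inequality $a_\sigma(2k-1)>\binom{2k-1}{k}^2$ of Lemma \ref{lemma:classical}. Your bookkeeping with falling factorials (rather than the paper's Stirling-number expansion) is equivalent, and your identity $k^2/(k!)^4=\binom{2k-1}{k}^2/((2k-1)!)^2$ matches the paper's simplification step.
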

\begin{proof}
Since $\sigma$ is a classical pattern, Lemma \ref{lemma:ExpBivinc} gives us that $\mathbb{E}(X_{n})=\frac{\binom{n}{k}}{k!}$. Using this fact and Corollary \ref{corr:main}, we obtain
\begin{equation*}
\begin{split}
\Scale[0.95]{\Var(X_{n}) = \mathbb{E}(X_{n}^{2}) - \mathbb{E}^{2}(X_{n}) = [a_{\sigma}(2k)\frac{\binom{n}{2k}}{(2k)!} + a_{\sigma}(2k-1)\frac{\binom{n}{2k-1}}{(2k-1)!}+ \mathcal{O}(n^{2k-2})] - \frac{\binom{n}{k}^{2}}{(k!)^{2}}}.    
\end{split}
\end{equation*}
We know that $\binom{n}{k} = \frac{(n)_{k}}{k!}$ and that $(n)_{k} = \sum\limits_{i=0}^{k}s(k,i)n^{i}$, where $s(k,i)$ are the \emph{Stirling numbers of the first kind}. We have $s(k,i) = (-1)^{k-i}{ k\brack i}$, where ${ k\brack i}$ is the number of permutations in $S_{k}$ with $i$ disjoint cycles. In particular, ${ k\brack k-1} = \binom{k}{2}$. Therefore,
\begin{equation*}
\begin{split}
\Scale[0.95]{\Var(X_{n}) = [a_{\sigma}(2k)\frac{n^{2k}-\binom{2k}{2}n^{2k-1}}{((2k)!)^{2}} + a_{\sigma}(2k-1)\frac{n^{2k-1})}{((2k-1)!)^{2}}] - \frac{n^{2k}-2\binom{k}{2}n^{2k-1}}{(k!)^{4}}+ \mathcal{O}(n^{2k-2}).}
\end{split}
\end{equation*}
It is easy to see that $a_{\sigma}(2k) = \binom{2k}{k}^{2}$ since a merge of length $2k$ of two copies of $\underline{\sigma}$ is uniquely determined by the set of $k$ positions among $[2k]$, where the first copy will be placed and the set of $k$ values among $[2k]$ at these positions. The values and the positions for the letters of the second copy are those remaining. Then, one can see that the coefficient of $\Var(X_{n})$ in front of $n^{2k}$ is $0$ and the coefficient in front of $n^{2k-1}$ is
\begin{equation*}
    \frac{-\binom{2k}{k}^{2}\binom{2k}{2}}{((2k)!)^{2}} + \frac{a_{\sigma}(2k-1)}{((2k-1)!)^{2}} + \frac{2\binom{k}{2}}{(k!)^{4}}.
\end{equation*}
Simplify the last expression to get that this coefficient is positive, only if
\begin{equation*}
    a_{\sigma}(2k-1)> \binom{2k-1}{k}^{2},
\end{equation*}
which follows from Lemma \ref{lemma:classical}.
\end{proof}
It is interesting to note that Burstein and H\"{a}st\"{o} obtained the same bound for the variance of $X_{n}$ in \cite{burstein}, but they did not state that it implies the central limit theorem for $\cnt_{\underline{P}}$. At the same time, in \cite{bona}, B\'{o}na proved the bound independently and did not cite the work of Burstein and H\"{a}st\"{o}.  

The proof of Lemma \ref{lemma:classical}, found in \cite{burstein}, is algebraic. As a first step, this proof shows that $a_{\sigma}(2k-1)$ is the trace of a product of two symmetric matrices, for which we know that they have only positive eigenvalues. In addition, one of the eigenvalues of the product matrix turns out to be $\binom{2k-1}{k}^{2}$. The result follows, since the trace of a matrix equals the sum of its eigenvalues. Next, we give an interpretation of Lemma \ref{lemma:classical}, which may be useful to obtain a combinatorial proof for it. 

Let 
\[A_{\sigma, \sigma'}(r) \coloneqq \{(\pi, x,y)\mid \pi\in S_{r},\text{ }x,y\in \subs(\pi),\text{ }\red(x) {=} \sigma,\text{ }\red(y){=}\sigma',\text{ } |x\cap y|=2r-k\},
\]
be the set of merges of length $r$ for the permutations $\sigma\in S_k$ and $\sigma'\in S_k$, corresponding to the patterns $\underline{\sigma}$ and $\underline{\sigma'}$, respectively. Let $a_{\sigma , \sigma'}(r) = |A_{\sigma, \sigma'}(r)|$. 

\begin{theorem}
\label{th:interpretation}
Lemma \ref{lemma:classical} is equivalent to 

\begin{equation}
    a_{\sigma}(2k-1) > \mathbb{E}(a_{\sigma , \sigma'}(2k-1)),
\end{equation}
where $\sigma\in S_k$ is a fixed classical pattern and $\sigma'\in S_{k}$ is chosen uniformly at random.
\end{theorem}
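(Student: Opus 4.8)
The plan is to reduce Theorem~\ref{th:interpretation} to the single evaluation $\mathbb{E}(a_{\sigma,\sigma'}(2k-1)) = \binom{2k-1}{k}^{2}$, where $\sigma'$ is uniform over $S_k$. Once this is established, the inequality in Lemma~\ref{lemma:classical} and the inequality in Theorem~\ref{th:interpretation} say literally the same thing, since the quantity $a_\sigma(2k-1)$ on the left is common to both. So the entire content is this expectation computation.

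Since $\sigma'$ is chosen uniformly, $\mathbb{E}(a_{\sigma,\sigma'}(2k-1)) = \frac{1}{k!}\sum_{\sigma'\in S_k} a_{\sigma,\sigma'}(2k-1)$, and I would exploit that the sets $A_{\sigma,\sigma'}(2k-1)$ are pairwise disjoint as $\sigma'$ varies, because each triple $(\pi,x,y)$ determines $\red(y)$ uniquely. Their union over all $\sigma'\in S_k$ is then exactly the set of triples $(\pi,x,y)$ with $\pi\in S_{2k-1}$, with $x,y\in\subs(\pi)$ of length $k$, with $\red(x)=\sigma$, and with $x$ and $y$ overlapping in a single element (equivalently, for $r=2k-1$ the two length-$k$ subsequences must cover all of $\pi$). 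In other words, summing over $\sigma'$ simply deletes the constraint $\red(y)=\sigma'$, as every length-$k$ subsequence reduces to \emph{some} element of $S_k$. Hence $\sum_{\sigma'} a_{\sigma,\sigma'}(2k-1)$ counts precisely these triples with no condition imposed on the reduction of $y$.

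I would then count these triples in two independent stages via the position sets $P_x,P_y\subseteq[2k-1]$ that determine $x$ and $y$ (a subsequence of a permutation is determined by its positions). First, the overlap/covering condition forces $|P_x\cap P_y|=1$ and $P_x\cup P_y=[2k-1]$; choosing the shared position and then splitting the remaining $2k-2$ positions between $x$ and $y$ gives $(2k-1)\binom{2k-2}{k-1}$ ordered pairs $(P_x,P_y)$. Second, for each fixed pair the number of $\pi\in S_{2k-1}$ with $\red(\pi|_{P_x})=\sigma$ is $\binom{2k-1}{k}(k-1)!$, since one chooses the $k$ values occupying $P_x$ in $\binom{2k-1}{k}$ ways with their order forced by $\sigma$, and arranges the remaining $k-1$ values freely. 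Multiplying and dividing by $k!$ gives $\mathbb{E}(a_{\sigma,\sigma'}(2k-1)) = \frac{1}{k!}(2k-1)\binom{2k-2}{k-1}\binom{2k-1}{k}(k-1)!$, and the identity $\frac{2k-1}{k}\binom{2k-2}{k-1}=\binom{2k-1}{k}$ collapses this to $\binom{2k-1}{k}^{2}$.

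The only genuinely delicate point is the middle step: one must verify that averaging over $\sigma'$ deletes the reduction constraint on $y$ exactly, and that the resulting count factors as (a choice of the two position sets) times (a choice of $\pi$ compatible with $\sigma$ alone, independent of which pair was chosen). Everything after that is a one-line binomial simplification, so I expect no obstacle beyond this bookkeeping, and the proof amounts to the evaluation above together with the observation that it turns Lemma~\ref{lemma:classical} into the displayed inequality of Theorem~\ref{th:interpretation}.
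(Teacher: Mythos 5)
Your proposal is correct, and it follows the same overall strategy as the paper: both reduce the theorem to the single evaluation $\mathbb{E}(a_{\sigma,\sigma'}(2k-1)) = \binom{2k-1}{k}^{2}$, after which the two inequalities coincide verbatim. Where you differ is in how this expectation is computed. The paper argues probabilistically: it fixes the positions of $x$, the choice of which element of $x$ is the common element $c$, and the values of $x$ (giving $\binom{2k-1}{k}\,k\cdot\binom{2k-1}{k}$ partial configurations), and then observes that a uniformly random $\sigma'$ is compatible with such a configuration with probability exactly $\frac{1}{k}$, since $c$ must land at one prescribed position of $y$; linearity of expectation then gives the product $\binom{2k-1}{k}k\cdot\binom{2k-1}{k}\cdot\frac{1}{k}$. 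You instead de-randomize from the start: you write the expectation as $\frac{1}{k!}$ times the size of the disjoint union $\bigcup_{\sigma'} A_{\sigma,\sigma'}(2k-1)$, note that this union is exactly the set of triples with no constraint on $\red(y)$, and count it by a clean two-stage enumeration (ordered pairs of position sets $(P_x,P_y)$ with one common position, times the number of $\pi$ with $\red(\pi|_{P_x})=\sigma$, which is $\binom{2k-1}{k}(k-1)!$ independently of the pair). Your version is arguably more self-contained and easier to verify, since every step is a finite count and the key independence (the count of compatible $\pi$ not depending on the chosen position sets) is explicit; the paper's version is shorter but leans on a conditioning argument whose bijective content (that each partial configuration together with a compatible $\sigma'$ determines a unique triple) is left implicit. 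Both yield the same identity and hence the same equivalence.
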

\begin{proof}
First, note that $\binom{2k-1}{k}^2$, which is the right-hand side of Equation \eqref{eq:burnstein} in Lemma \ref{lemma:classical}, can be written as $\frac{\binom{2k-1}{k}}{k}\binom{2k-1}{k}k$. Then, observe that $\binom{2k-1}{k}k$ is the number of ways to choose the $k$ positions from $[2k-1]$ for the numbers of the subsequence $x$ (that is order isomorphic to $\sigma$), as well as the position of the common element $c$ for $x$ and the subsequence $y$ (that is order-isomorphic to $\sigma'$). For each of these choices, we can select the values of the numbers of $x$ at the already selected positions in $\binom{2k-1}{k}$ ways. Once this choice is made, the values of $x,y$ and $c$ are uniquely determined. Suppose that $c$ has to be at position $p$ in $y$. Since $\sigma'$ is chosen uniformly at random, we have probability $\frac{1}{k}$ for the element $c$ to be at position $p$ in $y$. This gives $\frac{\binom{2k-1}{k}}{k}$ for the expected number of merges when we know the positions of the elements of $x$ and the position of $c$. Therefore, 
\[
\mathbb{E}(a_{\sigma , \sigma'}(2k-1)) = \frac{\binom{2k-1}{k}}{k}\binom{2k-1}{k}k = \binom{2k-1}{k}^2.
\]
\end{proof}
Interestingly, if $\sigma$ is fixed, $a_{\sigma , \sigma'}(k,2k-1)$ does not necessarily reach its maximum when $\sigma'=\sigma$. For instance, $a_{1324 , 1234}(4,7)>a_{1324 , 1324}(4,7)$. However, since we know that Lemma \ref{lemma:classical} holds, Theorem \ref{th:interpretation} gives us that when $\sigma'=\sigma$, we always get a value greater than the expectation over $\sigma'$. 

\subsection{Vincular Patterns}
\label{sec:vincular}
Recall that if $\boldsymbol{D}(\underline{P})$ is empty, then $\underline{P}$ is a vincular pattern and to denote it, we write $P$ with the positions $i$ and $i+1$ of $P$ underlined, for every $i\in\boldsymbol{C}(\underline{P})$. The \emph{blocks} of a vincular pattern are the groups of numbers at consecutive positions at the pattern, such that their corresponding numbers in an occurrence must be at consecutive positions, as well. For example, if $\underline{P} = (135246, \{1,2,5\},\emptyset)=\underline{135}2\underline{46}$, then $\boldsymbol{C}(\underline{P})$ has three blocks, namely $135$, $2$ and $46$.

The limiting normality of $\cnt_{\underline{\sigma}}$, when $\underline{\sigma}$ is a vincular pattern was first established by Hofer \cite{hofer}. She proposes two different approaches to bound the Kolmogorov distance between the distribution of $\cnt_{\underline{\sigma}}$ and the Normal distribution, both based on dependency graphs. To apply them, she needs a lower bound for the variance of $\cnt_{\underline{\sigma}}$, i.e., to prove a more general version of Theorem \ref{th:boundBona}, which holds for any vincular pattern. Hofer obtained such a generalization by a rather complicated recurrence based on the law of total variance.
\begin{theorem}[Hofer, \cite{hofer}]
\label{th:hofer}
Let $X_{n}=\cnt_{\underline{\sigma}}$ be the number of occurrences of a vincular pattern $\underline{\sigma}$ with $j$ blocks, in a random permutation of length $n$. Then, there exists $c>0$, such that for all $n$,
\[
\Var(X_{n})\geq cn^{2j-1}.
\]
\end{theorem}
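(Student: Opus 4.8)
The plan is to mimic the proof of Theorem~\ref{th:boundBona}, replacing the two relevant merge counts with their vincular analogues and tracking all degrees through the number of blocks $j = k - c$. First I would record the first moment: since $\underline{\sigma}$ is vincular, Lemma~\ref{lemma:ExpBivinc} (with $d = 0$) gives $\mathbb{E}(X_n) = \binom{n-c}{k-c}/k!$, a polynomial in $n$ of degree $j$ with leading coefficient $1/(k!\,j!)$; hence $\mathbb{E}^2(X_n)$ has degree $2j$ with leading coefficient $1/((k!)^2(j!)^2)$. For the second moment I would invoke Corollary~\ref{corr:main}. Because merging vincular patterns produces vincular patterns (by part (4) of Definition~\ref{def-merge}, $\boldsymbol{D}(\underline{P^2})=\emptyset$), every merge $\underline{P^2}$ has $\tilde d = 0$, so Equation~\eqref{eq:corr} collapses, after dividing by $n!$, to
\begin{equation*}
\mathbb{E}(X_n^2) = \sum_{\tilde k, \tilde c} w^{(2)}_{\tilde k, \tilde c, 0}\,\frac{\binom{n-\tilde c}{\tilde k - \tilde c}}{\tilde k!},
\end{equation*}
in which the summand indexed by a merge of length $\tilde k$ with $|\boldsymbol{C}(\underline{P^2})| = \tilde c$ is a polynomial in $n$ of degree $\tilde j := \tilde k - \tilde c$. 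Lemma~\ref{lemma:boundMerge} bounds $\tilde j \le 2j$, so $\mathbb{E}(X_n^2)$ again has degree $2j$, and the task reduces to computing the top two coefficients of $\Var(X_n) = \mathbb{E}(X_n^2) - \mathbb{E}^2(X_n)$.

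Next I would establish cancellation at order $n^{2j}$. Conceptually this is asymptotic independence: to leading order the two occurrences of $\underline{\sigma}$ are placed freely, so the $n^{2j}$ coefficients of $\mathbb{E}(X_n^2)$ and $\mathbb{E}^2(X_n)$ must coincide. Concretely, the $n^{2j}$ term of the display above comes precisely from the merges attaining equality $\tilde j = 2j$ in Lemma~\ref{lemma:boundMerge}, i.e.\ those whose overlap produces no net reduction of the block count. Counting these ``generic'' merges --- the vincular analogue of the identity $a_{\sigma}(2k) = \binom{2k}{k}^2$ used in Theorem~\ref{th:boundBona} --- I would verify the identity
\begin{equation*}
\sum_{\tilde k - \tilde c = 2j} \frac{w^{(2)}_{\tilde k, \tilde c, 0}}{\tilde k!} = \frac{(2j)!}{(k!)^2(j!)^2},
\end{equation*}
which specializes correctly to the classical computation when $c = 0$ and forces the $n^{2j}$ terms to cancel.

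It then remains to show that the surviving coefficient of $n^{2j-1}$ in $\Var(X_n)$ is strictly positive; since the variance is nonnegative, positivity of this leading surviving coefficient gives $\Var(X_n) \geq c\,n^{2j-1}$ for large $n$, and shrinking $c$ covers the finitely many remaining $n$. Expanding each binomial $\binom{n-\tilde c}{\tilde k - \tilde c}$ to one further order via the Stirling-number expansion of the falling factorial (exactly as in Theorem~\ref{th:boundBona}), this coefficient becomes an explicit rational combination of the subleading contributions of the generic merges, the subleading term of $\mathbb{E}^2(X_n)$, and the leading contributions of the merges with $\tilde j = 2j-1$. After simplification I expect positivity to reduce to a single strict inequality asserting that the number of block-degree-$(2j-1)$ merges of two copies of $\underline{\sigma}$ exceeds a prescribed value --- precisely the vincular analogue of Lemma~\ref{lemma:classical} and its bound~\eqref{eq:burnstein}.

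The hard part will be this final inequality. For classical patterns it is exactly Lemma~\ref{lemma:classical}, proved by Burstein and H\"{a}st\"{o} through a trace-of-positive-eigenvalue argument; for a general vincular pattern with $j$ blocks no such independent proof is presently available, and extracting the inequality from Theorem~\ref{th:hofer} itself would make the argument circular. Thus the reduction I have outlined is complete and purely formal, but a genuinely new proof of Theorem~\ref{th:hofer} along these lines hinges on establishing the vincular merge-count inequality \emph{directly} --- either by an eigenvalue computation analogous to Burstein--H\"{a}st\"{o} or by a combinatorial bijection --- which is the key point the paper flags as open.
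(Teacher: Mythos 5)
Your reduction is carried out correctly --- the first- and second-moment computations via Lemma~\ref{lemma:ExpBivinc} and Corollary~\ref{corr:main}, the cancellation at order $n^{2j}$ (which works because any merge in which the two copies share an element has at most $2j-1$ blocks, so only the length-$2k$ merges contribute at top order, giving exactly the paper's count $b_{\sigma}(2k,2j)=\binom{2k}{k}\binom{2j}{j}$), and the identification of the surviving $n^{2j-1}$ coefficient --- but it is not a proof of Theorem~\ref{th:hofer}, and you say so yourself: everything hinges on the vincular merge-count inequality, which you leave unproven. That inequality is precisely Inequality~\eqref{eq:lemmaVinc}, and the paper is explicit that it could not prove it (``We were not able to prove Inequality \eqref{eq:lemmaVinc} for vincular patterns with arbitrary number of blocks''); it survives only as Conjecture~\ref{conj:vinc} and as an open problem in the final section. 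So what you have reconstructed is, almost verbatim, the paper's Theorem~\ref{th:vinc}: the statement that Hofer's variance bound is \emph{equivalent} to
\[
\sum\limits_{l=1}^{M_{\underline{\sigma}}} (2k)_{l}\,b_{\sigma}(2k-l,2j-1) > \binom{2k}{k}\binom{2j-1}{j}j,
\]
not a derivation of either side.

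The paper itself handles Theorem~\ref{th:hofer} quite differently: it does not prove it at all, but cites Hofer, whose proof proceeds by a recurrence based on the law of total variance --- a self-contained route that never passes through merge counts. This asymmetry with the classical case is the entire point of Section~\ref{sec:vincular}: for classical patterns the needed inequality is Lemma~\ref{lemma:classical} of Burstein and H\"{a}st\"{o}, so Theorem~\ref{th:boundBona} genuinely gets proved by this method, whereas for vincular patterns the inequality is currently known to hold only \emph{because} Hofer's theorem is true, and invoking it would be circular, exactly as you note. One smaller caution: your closing remark that ``shrinking $c$ covers the finitely many remaining $n$'' does not work as stated, since $\Var(X_{n})=0$ for $n<k$; the bound has to be read, as in Theorem~\ref{th:boundBona}, as holding for all sufficiently large $n$. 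The substantive verdict, though, is that your argument establishes a correct reduction (the content of Theorem~\ref{th:vinc}), while the theorem itself remains unproved along this route.
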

Below, we show that this more general bound is equivalent to a lemma generalizing Lemma ~\ref{lemma:classical}, that has an analogous interpretation as the one given with Theorem \ref{th:interpretation}. 

If $\underline{\sigma}$ is a vincular pattern of length $k$ with $j$ blocks, then we denote by $b_{\sigma}(m,j')$ the number of merges of two copies of $\underline{\sigma}$, where the resulting pattern is of length $m$ and has $j'$ blocks. 
\begin{example}[Merge of two copies of a vincular pattern]\leavevmode\\

Let $\underline{\sigma} = \underline{431}\,\underline{52}$. This pattern has length $k=5$ and $j=2$ blocks. Below is given a merge of two copies of $\underline{\sigma}$. The resulting pattern $\underline{6531}\,\underline{84}\,\underline{72}$ is of length $m=8$ and has $j'=3$ blocks.

\begin{table}[h!]
$\begin{array}{|c|c|c|c|c|c|c|c|}
\hline
6 & 5 & 3 & 1 & 8 & 4 & 7 & 2\\   \hhline{|=|=|=|=|=|=|=|=|}
6 & 5  & 3 &   & 8 & 4 &   &   \\ \hline
  & 5  & 3 & 1 &   &   & 7 & 2 \\ \hline
\end{array}$
\caption{Merge of two copies of the pattern $\underline{431}\,\underline{52}$.}
\label{fig:vincMergeExample}
\end{table}
\end{example}
\vspace{-6mm}
If $\underline{\sigma}$ has blocks of sizes $\alpha_{1},\ldots , \alpha_{j}$, then let $M_{\underline{\sigma}} = \max\limits_{1\leq i \leq j} \{\alpha_{i}\}$ and let $[x^{k}]P$ denotes the coefficient of the polynomial $P$ in front of $x^k$.

\begin{theorem}
\label{th:vinc}
Theorem \ref{th:hofer} is equivalent to 
\begin{equation}
\label{eq:lemmaVinc}
    \sum\limits_{l=1}^{M_{\underline{\sigma}}} (2k)_{l}b_{\sigma}(2k-l,2j-1) > \binom{2k}{k}\binom{2j-1}{j}j.
\end{equation}
\end{theorem}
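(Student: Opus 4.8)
The plan is to follow the template of the proof of Theorem~\ref{th:boundBona}, now carrying the extra bookkeeping forced by the block structure. First I would record that, since $\underline{\sigma}$ is vincular ($d=0$) with $j=k-c$ blocks, Lemma~\ref{lemma:ExpBivinc} gives $\mathbb{E}(X_n)=\binom{n-c}{j}/k!$, a polynomial of degree $j$ in $n$, so that $\mathbb{E}(X_n)^2$ has degree $2j$. Theorem~\ref{th:hofer} is then equivalent to strict positivity of the coefficient of $n^{2j-1}$ in $\Var(X_n)$, provided one first checks that the coefficient of $n^{2j}$ vanishes identically. The whole proof reduces to computing these two coefficients.

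Next I would expand the second moment with Corollary~\ref{corr:main}, specialised to a single pattern, $r=2$ and $d=\tilde d=0$:
\[
\mathbb{E}(X_n^2)=\sum_{m,\,j'} b_{\sigma}(m,j')\,\frac{\binom{n-(m-j')}{j'}}{m!},
\]
where $b_{\sigma}(m,j')$ counts the merges of two copies of $\underline{\sigma}$ of length $m$ with $j'$ blocks (so $\tilde k=m$, $\tilde c=m-j'$). Each summand is a polynomial of degree $j'$ in $n$, and Lemma~\ref{lemma:boundMerge} gives $j'\le 2j$. I would first argue, by refining Lemma~\ref{lemma:boundMerge}, that $j'=2j$ forces the two copies to share no positions, i.e. $m=2k$: any maximal run of shared positions contributes strictly more to the position overlap than to the constraint overlap, so equality in Lemma~\ref{lemma:boundMerge} occurs only for disjoint merges. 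Counting these disjoint merges (choose the $k$ values of the first copy and the interleaving of its $j$ blocks among the $2j$ blocks of the merge) should give $b_{\sigma}(2k,2j)=\binom{2k}{k}\binom{2j}{j}$, whose contribution $b_{\sigma}(2k,2j)/((2k)!\,(2j)!)$ to the leading coefficient matches that of $\mathbb{E}(X_n)^2$; this establishes the vanishing of the $n^{2j}$ coefficient.

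The coefficient of $n^{2j-1}$ then collects three pieces. The first is the leading part of the $j'=2j-1$ merges, namely $\tfrac{1}{(2j-1)!}\sum_m b_{\sigma}(m,2j-1)/m!$; writing $m=2k-l$ and using the identity $(2k)_l/(2k)!=1/(2k-l)!$ turns $(2j-1)!\,(2k)!$ times this piece into exactly the left-hand side $\sum_{l}(2k)_l\,b_{\sigma}(2k-l,2j-1)$ of \eqref{eq:lemmaVinc}. Here the range $1\le l\le M_{\underline{\sigma}}$ appears because passing from $2j$ to $2j-1$ blocks fuses exactly one block of the first copy with one of the second, and such a fusion can share between $1$ and $\min$ of the two block sizes $\le M_{\underline{\sigma}}$ positions. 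The remaining two pieces are the subleading ($n^{2j-1}$) term of the single $j'=2j$ summand $\binom{n-2c}{2j}/(2k)!$ and the subleading term of $\mathbb{E}(X_n)^2=\binom{n-c}{j}^2/(k!)^2$; both are explicit rational multiples of $\binom{2k}{k}\binom{2j-1}{j}$ once one uses $\binom{2j}{j}=2\binom{2j-1}{j}$ and $\binom{2j-1}{j}=(2j-1)!/(j!\,(j-1)!)$.

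The main obstacle is exactly this last bookkeeping: extracting the second coefficient of each falling-factorial binomial $\binom{n-a}{p}$ via the root-sum formula $-(2a+p-1)/\bigl(2(p-1)!\bigr)$, and then showing that the $j'=2j$ subleading term and the $\mathbb{E}(X_n)^2$ subleading term combine into a single multiple of $\binom{2k}{k}\binom{2j-1}{j}$ constituting the right-hand side of \eqref{eq:lemmaVinc}. Once the $n^{2j}$ coefficient is known to vanish, positivity of $\Var(X_n)$ at order $n^{2j-1}$ is equivalent to the resulting inequality, giving the claimed equivalence with Theorem~\ref{th:hofer}. As a consistency check to run first, specialising to a classical pattern ($j=k$, $M_{\underline{\sigma}}=1$) collapses the left-hand sum to the single term $2k\,a_{\sigma}(2k-1)$ and must reproduce Lemma~\ref{lemma:classical} exactly.
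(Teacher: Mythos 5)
Your outline reproduces the paper's own strategy step for step: expand $\mathbb{E}(X_n^2)$ via Corollary~\ref{corr:main}, observe that the unique degree-$2j$ contribution comes from the position-disjoint merges with $b_{\sigma}(2k,2j)=\binom{2k}{k}\binom{2j}{j}$, so that the $n^{2j}$ terms cancel against $\mathbb{E}(X_n)^2$, and then identify positivity of the coefficient of $n^{2j-1}$ in $\Var(X_n)$ with Inequality~\eqref{eq:lemmaVinc}. The pieces you do work out are correct, and your argument that a merge with $2j$ blocks must be position-disjoint (each maximal run of shared positions loses at least one block) is a point the paper glosses over entirely.

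However, there is a genuine gap, and it sits exactly where you defer the work: you assert, without computing, that the subleading term of the disjoint-merge summand and the subleading term of $\mathbb{E}(X_n)^2$ combine into the right-hand side $j\binom{2k}{k}\binom{2j-1}{j}$ of \eqref{eq:lemmaVinc}. That assertion is the entire content of the theorem, and carrying out the bookkeeping with your own root-sum formula contradicts it. The disjoint-merge summand contributes $-\bigl(2j(2k-2j)+\binom{2j}{2}\bigr)/(k!\,j!)^2$ and $-\mathbb{E}(X_n)^2$ contributes $+\bigl(2j(k-j)+j(j-1)\bigr)/(k!\,j!)^2$ to the coefficient of $n^{2j-1}$; their sum is $-j(2k-j)/(k!\,j!)^2$, not $-j^2/(k!\,j!)^2$. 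Consequently, positivity of $[n^{2j-1}]\Var(X_n)$ is equivalent to
\[
\sum_{l=1}^{M_{\underline{\sigma}}}(2k)_{l}\,b_{\sigma}(2k-l,2j-1) \;>\; (2k-j)\binom{2k}{k}\binom{2j-1}{j},
\]
with $2k-j$ in place of $j$. (The paper's own proof lands on $-j^2$ only by dropping the factors $2j$ and $j$ when expanding $(n-2k+2j)^{2j}$ and $(n-k+j)^{j}$.) Your proposed consistency check cannot detect this, because $2k-j=j$ precisely in the classical case $j=k$, where both versions collapse to Lemma~\ref{lemma:classical}. The decisive test is $\sigma=\underline{21}$ ($k=2$, $j=1$, $M_{\underline{\sigma}}=2$, $b_{\sigma}(3,1)=2$, $b_{\sigma}(2,1)=1$), where $\Var(X_n)=(n+1)/12$: the corrected constant gives $-3/4+2/6+1/2=1/12$, matching the true leading coefficient, whereas $-j^2/(k!\,j!)^2$ would give $7/12$. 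So your plan is the right plan, but finishing it honestly yields a corrected inequality rather than the one quoted; any writeup should either fix the constant or explain the discrepancy.
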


\begin{proof}
We will use that the expected number of occurrences of a vincular pattern $\underline{\sigma}$ of length $k$, with $j$ blocks, in a random permutation of length $n$ is $\frac{\binom{n-(k-j)}{j}}{k!}$. This follows from Lemma ~\ref{lemma:ExpBivinc} and the fact that $|\boldsymbol{C}(\underline{\sigma})|=k-j$. Apply Corollary \ref{corr:main} and note that if $m_{1},m_{2}: \underline{\sigma},\underline{\sigma}\to \underline{P}$ and $\underline{P}$ has $2j-1$ blocks, then exactly one block of the first copy of $\underline{\sigma}$ was merged with one block of the second copy of $\underline{\sigma}$. Therefore, $|P|=\tilde{k}\in [2k-M_{\underline{\sigma}},2k]$ and $|\boldsymbol{C}(\underline{P})| = \tilde{k}-(2j-1)$. We have \\

\resizebox{0.97\hsize}{!}{
$
\begin{aligned}
\Var(X_{n}) = \mathbb{E}(X_{n}^{2}) -\mathbb{E}^{2}(X_{n}) =  \left[b(2k,2j)\frac{\binom{n-(2k-2j)}{2j}}{(2k)!} + \sum\limits_{l=1}^{M_{\underline{\sigma}}} b_{\sigma}(2k-l,2j-1)\frac{\binom{n-(2k-l-2j+1)}{2j-1}}{(2k-l)!}\right] \\
- \frac{\binom{n-(k-j)}{j}^{2}}{(k!)^{2}} + \mathcal{O}(n^{2j-2}). 
\end{aligned}$
}

We will again use that $\binom{n}{k} = \frac{(n)_{k}}{k!}$ and that $(n)_{k} = \sum\limits_{i=0}^{k}s(k,i)n^{i}$, where $s(k,i) = (-1)^{k-i}{ k\brack i}$ are the Stirling numbers of the first kind and ${ k\brack i}$ is the number of permutations in $S_{k}$ with $i$ disjoint cycles. Since ${ k\brack k-1} = \binom{k}{2}$ and $b_{\sigma}(2k,2j)=\binom{2k}{k}\binom{2j}{j} = \frac{(2j)!(2k)!}{(k!j!)^{2}}$, we get the following.

\resizebox{0.98\hsize}{!}{
$\begin{aligned}
\Var(X_{n}) & = \left[ \frac{(2j)!(2k)!}{(k!j!)^{2}}\frac{(n-2k+2j)_{2j}}{(2j)!(2k)!} + \sum\limits_{l=1}^{M_{\underline{\sigma}}} b_{\sigma}(2k-l,2j-1)\frac{(n-(2k-l-2j+1))_{2j-1}}{(2k-l)!(2j-1)!}\right] -
\frac{(n-k+j)_{j}^{2}}{(k!)^{2}(j!)^{2}} + \mathcal{O}(n^{2j-2}) \\
& = \frac{1}{(k!j!)^{2}}[(n-2k+2j)^{2j}-\binom{2j}{2}(n-2k+2j)^{2j-1}-
((n-k+j)^{j}-\binom{j}{2}(n-k+j)^{j-1}+\mathcal{O}(n^{j-2}))^{2}] \\
& +\sum\limits_{l=1}^{M_{\underline{\sigma}}} b_{\sigma}(2k-l,2j-1)\frac{(n-2k+l+2j-1)^{2j-1} + \mathcal{O}(n^{2j-2})}{(2k-l)!(2j-1)!}+ \mathcal{O}(n^{2j-2})\\
& = \frac{1}{(k!j!)^{2}}[n^{2j}-(2k-2j)n^{2j-1}-j(2j-1)n^{2j-1}-(n^{j}-(k-j)n^{j-1}-\frac{j(j-1)}{2}n^{j-1}+\mathcal{O}(n^{j-2}))^{2}] \\ & +\sum\limits_{l=1}^{M_{\underline{\sigma}}} b_{\sigma}(2k-l,2j-1)\frac{n^{2j-1}}{(2j-1)!(2k-l)!} + \mathcal{O}(n^{2j-2}).
\end{aligned}$
}\\

After simplifying, we get that $[n^{2j-1}]\Var(X_{n})>0$ if and only if
\begin{equation*}
\begin{split}
 \frac{-j^{2}}{(k!j!)^{2}}+ \sum\limits_{l=1}^{M_{\underline{\sigma}}}\frac{b_{\sigma}(2k-l,2j-1)}{(2k-l)!(2j-1)!} > 0 \Longleftrightarrow  \\
\sum\limits_{l=1}^{M_{\underline{\sigma}}} (2k)_{l}b_{\sigma}(2k-l,2j-1) > \binom{2k}{k}\binom{2j-1}{j}j.
\end{split}
\end{equation*}

\end{proof}
Note that when $j=k$, we have $M_{\underline{\sigma}}=1$ and $b_{\sigma}(2k-1,2j-1)=a_{\sigma}(2k-1)$, so we get Lemma \ref{lemma:classical}. When $j=1$, Inequality \eqref{eq:lemmaVinc} is trivial, since $M_{\underline{\sigma}}=k$ and on the left, just one of the summands (when $l=k$) is $(2k)_{k}$, while on the right we have $\binom{2k}{k}<(2k)_{k}$. We were not able to prove Inequality \eqref{eq:lemmaVinc} for vincular patterns with arbitrary number of blocks.

However, we can give an interpretation of this inequality. Note that when one merges two copies of a pattern with $j$ blocks and the obtained pattern has $2j-1$ blocks, then the blocks of the two copies can be aligned in exactly $\binom{2j-1}{j}j$ ways. These alignments will be called \emph{configurations}. For example, when $j=2$, there are $\binom{3}{2}2 = 6$ configurations shown below (the $\square$ symbol represents a block):

\renewcommand{\thefigure}{3}
\begin{figure}[ht!]
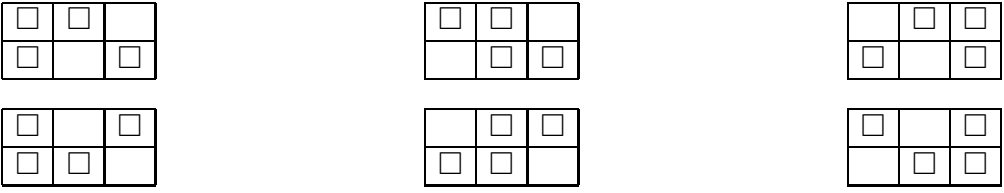

    \centering
    \begin{multicols}{3}
    \begin{itemize}[itemsep=10pt]
        \item[] $\begin{array}{|c|c|c|}
\hline
\square & \square &  \\ \hline
\square &   & \square\\ \hline
\end{array}$
\item[] $\begin{array}{|c|c|c|}
\hline
\square &   & \square\\ \hline
\square & \square &  \\ \hline
\end{array}$
\item[] $\begin{array}{|c|c|c|}
\hline
\square & \square & \\ \hline
 & \square &  \square \\ \hline
\end{array}$
\item[] $\begin{array}{|c|c|c|}
\hline
& \square &  \square \\ \hline
\square & \square  & \\ \hline
\end{array}$
\item[] $\begin{array}{|c|c|c|}
\hline
 & \square & \square  \\ \hline
\square &   & \square\\ \hline
\end{array}$
\item[] $\begin{array}{|c|c|c|}
\hline
\square &   & \square\\ \hline
& \square & \square  \\ \hline
\end{array}$
\end{itemize}
\end{multicols}
    \caption{The $6$ possible configurations, when merging two copies of a pattern with two blocks.}
    \label{fig:configs}
\end{figure}

For instance, the configuration corresponding to the merge shown on Figure \ref{fig:vincMergeExample} is the top-left configuration shown on Figure \ref{fig:configs}.
It is not difficult to see that the conjecture we give next would imply Inequality \eqref{eq:lemmaVinc} and respectively Theorem \ref{th:hofer} and the CLT for vincular patterns.
\begin{conjecture}
\label{conj:vinc}
For every vincular pattern $\sigma$ with $j$ blocks and every $1\leq l\leq M_{\underline{\sigma}}$,
\begin{equation}
\label{eq:conj}
b'_{\sigma}(2k-l,2j-1)>\frac{\binom{2k-l}{k}}{k_{(l)}}c_{\sigma ,l},
\end{equation}
 where $c_{\sigma ,l}\coloneqq$ the number of possible configurations for a merge of two copies of $\underline{\sigma}$, such that the minimum of the sizes of the two merged blocks is $l$ and $b'_{\sigma}(2k-l,2j-1)$ is the number of merges of two copies of $\underline{\sigma}$ with $l$ common elements and $2j-1$ blocks, such that they correspond to one of the same $c_{\sigma ,l}$ configurations.
\end{conjecture}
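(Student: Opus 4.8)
The plan is to lift the algebraic argument of Burstein and H\"{a}st\"{o} behind Lemma \ref{lemma:classical} to the block-refined vincular setting, using the probabilistic reading of Theorem \ref{th:interpretation} only as a compass for which quantity to isolate. The reduction in the proof of Theorem \ref{th:vinc} already expresses the variance through the merge counts, and the per-$(l,\text{configuration})$ quantity $b'_{\sigma}(2k-l,2j-1)$ is the natural building block. First I would record the analogue of Theorem \ref{th:interpretation} at the level of a single configuration: for a fixed configuration in which a block of one copy of $\underline{\sigma}$ of size $l$ is absorbed into a longer block of the other copy, the value $\frac{\binom{2k-l}{k}}{k_{(l)}}$ is exactly the expected number of value-and-position assignments realising that configuration when the second copy is replaced by a uniformly random vincular pattern with the same block profile. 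Summing over the $c_{\sigma,l}$ such configurations then identifies the right-hand side of \eqref{eq:conj} as the expectation $\mathbb{E}_{\sigma'}\big(b'_{\sigma,\sigma'}(2k-l,2j-1)\big)$, so that \eqref{eq:conj} asserts that the diagonal term, where both copies equal $\sigma$, strictly dominates this average.

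To prove the domination I would, for fixed $\sigma$ and fixed $l$, follow Burstein and H\"{a}st\"{o} and write $b'_{\sigma}(2k-l,2j-1)=\operatorname{tr}(A_l B_l)$ for two symmetric matrices $A_l,B_l$ whose common index set is the combinatorial data of a merge, namely the ambient pattern $\pi$ of length $2k-l$ together with the placement of the shared size-$l$ seam-block. The key structural step is to exhibit each of $A_l,B_l$ as a Gram matrix, $A_l=U_lU_l^{\top}$, where the columns of $U_l$ record the weighted multiplicity with which $\sigma$ embeds into $\pi$ so that its blocks meet the adjacency demands of the configuration; this makes $A_l B_l$ have nonnegative, indeed positive, eigenvalues. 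I would then show that the uniform direction, along which averaging over $\sigma'$ acts, is an eigenvector with eigenvalue exactly $\frac{\binom{2k-l}{k}}{k_{(l)}}c_{\sigma,l}$, the block-refined counterpart of the eigenvalue $\binom{2k-1}{k}^2$ isolated in the classical case. Since $\operatorname{tr}(A_l B_l)$ is the sum of all these positive eigenvalues, it exceeds the distinguished one, and strictness follows as soon as a second eigenvalue is positive, which I would secure by exhibiting one further embedding vector not proportional to the uniform one, so that $A_l B_l$ is not of rank one.

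I expect the principal obstacle to be the positive-semidefiniteness together with the \emph{exact} eigenvalue computation once the vincular constraints of Definition \ref{def-merge} are present. In the classical case $j=k$ every position of $\pi$ plays a symmetric role, which is precisely what makes the distinguished eigenvalue come out as a clean binomial square; the block adjacency conditions destroy this homogeneity, since an element may embed into $\pi$ only when the consecutiveness demands of its block are met, and these demands vary from one configuration to another. Thus $U_l$ ceases to be a plain incidence matrix, and checking that the uniform direction remains an \emph{exact} eigenvector, rather than merely an approximate one, is the crux of the argument.

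Finally, a pragmatic remark on scope. The conjectured inequality \eqref{eq:conj} is asserted separately for each $l$ and each configuration class counted by $c_{\sigma,l}$, which is strictly finer than the aggregate bound \eqref{eq:lemmaVinc} that already implies Theorem \ref{th:hofer} and the central limit theorem. Should the per-configuration eigenvalue identification prove intractable, a natural fallback is to prove only \eqref{eq:lemmaVinc} directly, by summing the traces over all $l$ and bounding $\sum_l\operatorname{tr}(A_lB_l)$ below by the total uniform-direction eigenvalue $\binom{2k}{k}\binom{2j-1}{j}j$; this aggregate spectral estimate would suffice for all the intended consequences even without the sharper statement in \eqref{eq:conj}.
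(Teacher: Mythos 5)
You have attempted to prove Conjecture \ref{conj:vinc}, but note first that the paper offers no proof of this statement: it is stated as a conjecture, and the authors say explicitly that they were unable to prove even the weaker aggregate inequality \eqref{eq:lemmaVinc} that it implies. So your proposal must stand on its own, and it does not: it is a research program whose every load-bearing step is left open. Your first paragraph, identifying the right-hand side of \eqref{eq:conj} as $\mathbb{E}_{\sigma'}\bigl(b'_{\sigma,\sigma'}(2k-l,2j-1)\bigr)$, is correct but is exactly the interpretation the paper itself gives after Conjecture \ref{conj:vinc} (the analogue of Theorem \ref{th:interpretation}); it is a restatement of the inequality, not progress toward it. The spectral core is then asserted rather than constructed: (a) you never exhibit the matrices $A_l,B_l$ with $b'_{\sigma}(2k-l,2j-1)=\operatorname{tr}(A_lB_l)$, nor their Gram structure — and since the vincular adjacency constraints vary from configuration to configuration, it is not even clear what common index set such matrices should have; (b) the claim that the uniform direction is an \emph{exact} eigenvector with eigenvalue $\frac{\binom{2k-l}{k}}{k_{(l)}}c_{\sigma,l}$ is precisely where the classical Burstein--H\"{a}st\"{o} proof uses the homogeneity of positions, which you yourself concede is destroyed by the block constraints; (c) strictness needs a second positive eigenvalue, which you promise to "secure" but never do. The paper's own remark that $a_{\sigma,\sigma'}$ need not be maximized at $\sigma'=\sigma$ (e.g., $a_{1324,1234}(7)>a_{1324,1324}(7)$) shows the "diagonal dominates the average" phenomenon is delicate even classically, and your conjecture-level refinement (per $l$, per configuration class) is strictly stronger than what a trace-of-product argument yields in aggregate, so there is no a priori reason the per-configuration eigenvalue identity should hold at all.

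Your fallback — proving \eqref{eq:lemmaVinc} directly by summing traces and bounding below by $\binom{2k}{k}\binom{2j-1}{j}j$ — is not a fallback but \emph{is} the open problem: it is listed verbatim among the paper's further questions, and it presupposes the same unconstructed Gram decomposition and the same unverified eigenvalue computation. In short, the proposal correctly locates where the difficulty lives (the loss of position-homogeneity under vincular constraints) but proves nothing; both the conjectured inequality and its aggregate consequence remain open after your argument exactly as they were before it.
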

Indeed, it suffices to note that $\sum_{l=1}^{M_{\underline{\sigma}}}c_{\sigma ,l} = \binom{2j-1}{j}j$ and that $\frac{\binom{2k-l}{k}}{k_{(l)}} = \frac{\binom{2k}{k}}{(2k)_{l}}$. Thus, if we sum up Inequality \eqref{eq:conj} over $l$, we get Inequality \eqref{eq:lemmaVinc} with $b_{\sigma}$ replaced with $b'_{\sigma}$. Since $b_{\sigma}(2k-l,2j-1)\geq b'_{\sigma}(2k-l,2j-1)$, for all $l,j$ and $k$, Conjecture \ref{conj:vinc} would indeed imply Inequality \eqref{eq:lemmaVinc}. The ratio $\frac{\binom{2k-l}{k}}{k_{(l)}}$ is the expected number of merges when we fix one of the $c_{\sigma ,l}$ configurations and when we merge $\underline{\sigma}$ and $\underline{\sigma'}$, where $\sigma'\in S_k$ is a permutation selected uniformly at random and $\underline{\sigma'}$ has the same block structure as $\underline{\sigma}$. Therefore, Inequality \eqref{eq:conj} can be written as 
\begin{equation}
b'_{\sigma}(2k-s,2j-1)>\mathbb{E}(b'_{\sigma,\sigma'}(2k-s,2j-1)),
\end{equation}
where $b'_{\sigma,\sigma'}(2k-s,2j-1)$ is defined analogously to $a_{\sigma,\sigma'}(2k-1)$.
\subsection{Bivincular patterns}
\label{subsec:bivinc}
In the general case when $\underline{P}$ is a pattern for which $\boldsymbol{D}(\underline{P})$ might be non-empty, we do not necessarily have asymptotic normality of the distribution of $\cnt_{\underline{P}}$. For example, $\cnt_{\scaleto{\Vectorstack{\overline{12} \underline{21}}}{15pt}}$, which is the adjacency statistic $\adj$ introduced in Section \ref{sec:descents}, has Poisson distribution with mean $1$. This follows from a result proved by Wolfowitz \cite{wolfowitz} and independently by Kaplansky \cite{kaplansky} in the 1940s. They showed that if $X$ denotes the pairs of numbers $a$, $a+1$ that have consecutive positions in a permutation in $S_n$ that is chosen uniformly at random, then $X$ is asymptotically Poisson distributed with mean $2$. In 2014, Corteel et al. \cite{corteel} give another proof of this result that uses the method of Chen, which is used to prove convergence to Poisson distribution and which is an adaptation of the method of Stein for convergence to normal distribution \cite{stein}. Roughly, the method of Chen can be applied when one considers a sum of Bernoulli random variables such that many of them are independent. The article \cite{arratia} contains an accessible introduction and some good examples. 
 
Here, we reprove the fact that the asymptotic distribution of  $\adj$ is Poisson with mean $1$ by using Theorem \ref{th:adj} and the Fr\'{e}chet-Shohat Theorem given below.

\begin{theorem}[{\cite[Theorem 30.2]{momMeas}}]
\label{th:FrSh}
Suppose that the distribution of $X$ is determined by its moments and that $X_{n}$ have moments of all orders. Suppose also that $\lim_{n\to \infty}\mathbb{E}(X_{n}^{r}) = \mathbb{E}(X^{r})$, for $r=1,2,\ldots$. Then, $X_n$ converges in distribution to $X$.
\end{theorem}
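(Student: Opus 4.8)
The Fr\'{e}chet--Shohat theorem is a classical statement (the \emph{method of moments}), so I will only sketch the standard proof. The plan is to reduce the convergence in distribution $X_n \Rightarrow X$ to three ingredients: (i) tightness of the sequence $(X_n)$, so that every subsequence of the distribution functions has a weakly convergent sub-subsequence whose limit is a genuine probability law; (ii) identification of the moments of every such subsequential limit with $\mathbb{E}(X^r)$; and (iii) the determinacy hypothesis, which forces each subsequential limit to coincide with the law of $X$. A routine subsequence argument then upgrades the subsequential conclusion to convergence of the whole sequence.

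First I would establish tightness. Since $\mathbb{E}(X_n^2)\to\mathbb{E}(X^2)<\infty$, the second moments are uniformly bounded, say by $C$, and Chebyshev's inequality gives $\mathbb{P}(|X_n|>M)\le C/M^2$ uniformly in $n$; hence no mass escapes to infinity and $(X_n)$ is tight. By Helly's selection theorem, any subsequence of the distribution functions $(F_n)$ admits a further subsequence converging weakly to some distribution function $F$, and tightness guarantees that $F$ is the distribution function of an honest random variable $Y$ (no defect of total mass).

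Next I would show that $Y$ has exactly the moments of $X$. Along the convergent sub-subsequence, $X_{n_j}\Rightarrow Y$, so for each fixed $r$ I want $\mathbb{E}(X_{n_j}^r)\to\mathbb{E}(Y^r)$. This is the one nontrivial analytic point: weak convergence alone does not transmit moments. The remedy is uniform integrability of $(X_{n_j}^r)_j$, which follows from the uniform boundedness of a strictly higher moment, itself immediate from hypothesis~(iii) (each sequence $\mathbb{E}(X_n^{s})$ converges, hence is bounded in $n$). Granted uniform integrability, $\mathbb{E}(Y^r)=\lim_j \mathbb{E}(X_{n_j}^r)=\mathbb{E}(X^r)$ for all $r$. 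Because the law of $X$ is determined by its moments, $Y\stackrel{d}{=}X$, i.e. $F$ is the distribution function of $X$. Since every subsequence of $(F_n)$ has a further subsequence converging weakly to this single limit, the full sequence converges, proving $X_n\Rightarrow X$.

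The main obstacle is the analytic heart of step (ii), namely justifying the passage of moments through the weak limit via uniform integrability; the determinacy hypothesis is what renders the limit unique and cannot be dropped, since the moment problem can be indeterminate. In the application that motivates stating the theorem, $X\sim\Po(1)$ is determined by its moments and $X_n=\adj$ on a uniformly random permutation of $[n]$; Theorem~\ref{th:adj} supplies closed forms for $\mathbb{E}(\adj^r)$ whose limits as $n\to\infty$ match the moments of $\Po(1)$, so the theorem then delivers the claimed Poisson limit law.
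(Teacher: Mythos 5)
The paper does not prove this statement at all: it imports it verbatim as Theorem 30.2 of Billingsley's \emph{Probability and Measure}, the very reference cited in the theorem's header, and uses it as a black box. Your sketch --- tightness from the uniformly bounded second moments, Helly selection, uniform integrability (via boundedness of a strictly higher moment) to pass moments through the weak limit, the determinacy hypothesis to identify every subsequential limit with the law of $X$, and the subsequence principle to upgrade to convergence of the full sequence --- is exactly the standard method-of-moments argument found in that cited source, and it is correct.
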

\begin{definition}
The discrete random variable $X\coloneqq\Po(\lambda)$ is said to have a Poisson distribution, with parameter $\lambda>0$, if
\[
\mathbb{P}(X=k) = \frac{\lambda^{k}e^{-\lambda}}{k!},\qquad \text{for }k=0,1,\ldots
\]
\end{definition}
\begin{theorem}
As $n\to \infty$, $\adj$ converges in distribution to $\Po(1)$.
\end{theorem}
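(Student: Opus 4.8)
The plan is to apply the method of moments via the Fr\'{e}chet-Shohat Theorem (Theorem \ref{th:FrSh}). First I would record the target moments: the $r$-th moment of $\Po(1)$ is the Bell number $B_r=\sum_{u=1}^{r}S(r,u)$, where $S(r,u)$ denotes the Stirling numbers of the second kind, and the Poisson distribution is determined by its moments since its moment generating function $e^{e^{t}-1}$ is finite for all $t$ (so Carleman's condition holds). Thus it suffices to show that for every fixed $r\geq 1$, the explicit expression for $\mathbb{E}(\adj^{r})$ from Theorem \ref{th:adj} converges to $B_r$ as $n\to\infty$.

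The heart of the argument is a termwise limit of that finite double sum. For fixed $r$ the outer index ranges over $2\leq m\leq 2r$ and the inner over $1\leq u\leq\lfloor m/2\rfloor$, so the sum is finite and I can pass to the limit summand by summand without any uniformity concerns. The only factor depending on $n$ is $\frac{\binom{n-(m-u)}{u}^{2}}{n_{(m)}}$, and since $\binom{n-(m-u)}{u}\sim n^{u}/u!$ and $n_{(m)}\sim n^{m}$, this ratio behaves like $\frac{n^{2u-m}}{(u!)^{2}}$. Because the constraint $u\leq\lfloor m/2\rfloor$ forces $2u-m\leq 0$, every summand with $2u<m$ vanishes in the limit, and exactly the diagonal terms $m=2u$ survive, each contributing $\lim_{n\to\infty}\frac{\binom{n-u}{u}^{2}}{n_{(2u)}}=\frac{1}{(u!)^{2}}$.

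It then remains to simplify the surviving diagonal. Setting $m=2u$, the factor $\binom{m-u-1}{u-1}=\binom{u-1}{u-1}=1$, while the alternating inner sum becomes $\sum_{w=0}^{u}(-1)^{w}\binom{u}{w}(u-w)^{r}$, which by inclusion-exclusion counts the surjections from $[r]$ onto $[u]$ and hence equals $u!\,S(r,u)$. Multiplying by the remaining $u!$ and by the limit $\frac{1}{(u!)^{2}}$ collapses each diagonal term to $S(r,u)$, so that $\lim_{n\to\infty}\mathbb{E}(\adj^{r})=\sum_{u=1}^{r}S(r,u)=B_r$, the $r$-th moment of $\Po(1)$. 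Applying Theorem \ref{th:FrSh} then yields convergence in distribution. I expect the main obstacle to be purely bookkeeping: carefully verifying the asymptotic orders so that only the $m=2u$ terms contribute in the limit, and correctly identifying the inclusion-exclusion sum with the surjection count $u!\,S(r,u)$. Once these two points are in place, the moment convergence, and hence the Poisson limit, follows immediately.
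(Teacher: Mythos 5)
Your proposal is correct and follows essentially the same route as the paper's own proof: apply the Fr\'{e}chet--Shohat theorem, take the termwise limit of the finite double sum from Theorem \ref{th:adj} so that only the diagonal terms $m=2u$ survive with limit $\frac{1}{(u!)^{2}}$, and identify the alternating sum with $u!\,S(r,u)$ to obtain the Bell number $B_r$. The only cosmetic difference is that you justify moment-determinacy of the Poisson law via its moment generating function and Carleman's condition, whereas the paper cites \cite[Theorem 30.1]{momMeas}; both are valid.
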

\begin{proof}
The Poisson measure is determined by its moments. One can deduce that using \cite[Theorem 30.1]{momMeas}. Because of Theorem \ref{th:FrSh}, it suffices to show that $\mathbb{E}(\adj^{r})$ converges to the $r$-th moment of $Po(1)$, when $n\to \infty$. A well-known fact is that the $r$-th moment of the Poisson distribution with mean $1$ is the $r$-th  Bell number $B_{r} = \sum\limits_{k=1}^{r}S(r,k)$, where $S(r,k)$ is the \emph{Stirling number of the second kind} (for more details, see \cite{pitman}). Looking at the double sum expression for $\mathbb{E}(\adj^{r})$ obtained in Theorem \ref{th:adj}, we see that every summand is a product of terms not including $n$ and the term $\frac{(\binom{n-(m-u)}{u})^{2}}{n_{(m)}}$ is $\mathcal{O}(1)$, unless $u = \frac{m}{2}$. Thus, when $n\to \infty$, we can look only at the terms corresponding to even values of $m$, i.e., $m=2m_{1}$ for some $m_{1} = 1,\ldots ,r$ and $u = \frac{m}{2} = m_{1}$. Since $\lim\limits_{n\to\infty} \frac{\binom{n-m_{1}}{m_{1}}^{2}}{n_{(2m_{1})}} = \frac{1}{(m_{1}!)^{2}}$ and $\sum\limits_{i=0}^{k}(-1)^{i}\binom{k}{i}(k-i)^{r} = k!S(r,k)$ , we obtain the following.
\begin{equation*}
\begin{split}
\lim_{n\to\infty} \mathbb{E}(\adj^{r}) = \sum\limits_{m_{1}=1}^{r}\left(\sum\limits_{w=0}^{m_{1}}(-1)^{w}\binom{m_{1}}{w}(m_{1}-w)^{r}\right)m_{1}!\frac{1}{(m_{1}!)^{2}} = \\ = \sum\limits_{m_{1}=1}^{r}\frac{\sum\limits_{w=0}^{m_{1}}(-1)^{w}\binom{m_{1}}{w}(m_{1}-w)^{r}}{m_{1}!} =  \sum\limits_{m_{1}=1}^{r} S(r,m_{1}) = B_{r}.
\end{split}   
\end{equation*}

\end{proof}
It would be interesting to investigate which are the possible asymptotic distributions of $\cnt_{\underline{P}}$ for other bivincular patterns? This question has been already stated in \cite[Section 1]{hofer}, where some approaches were also suggested.

\section{Patterns with linear valuation polynomials}
\label{sec:lin_val_polynomials}
In this section, we obtain direct formulas for $M(f_{\underline{P},Q},n)$ in three special cases using linearity of expectation. 
\begin{theorem}
\label{th:linearity}
Consider a simple statistic with a valuation polynomial $Q(s,w)=Q_1(s)Q_2(w)$. The following three formulas for $M(f_{\underline{P},Q},n)$ hold in the described cases for $Q_{1}$, $Q_{2}$, $|\boldsymbol{C}(\underline{P})|$ and $|\boldsymbol{D}(\underline{P})|$.

\captionsetup[table]{name=Table}
\begin{table}[h!]
\renewcommand\thetable{1}
\centering
\captionsetup{position=below}
\scalebox{0.7}{
\begin{tabular}{|c|c|c|c|c|c|} 
 \hline  
 \textbf{Formula} & \boldmath{$Q_1(s)$} & \boldmath{$Q_2(w)$} & \boldmath{$|C(P)|$} & \boldmath{$|D(P)|$} & \boldmath{$M(f_{\underline{P},Q},n)=\sum\limits_{\sigma\in S_{n}}\sum\limits_{s\in_{P}\sigma}Q_{1}(s)Q_{2}(w)$} \\ 
 \hline\hline
1 &
$a_{0} + a_{1}s_{1} + \ldots + a_{k}s_{k}$ &
1 &
c &
0 &
$\binom{n}{k}\binom{n-c}{k-c}(n-k)!(a_{0} + \frac{n+1}{k+1}\sum\limits_{i=1}^{k}ia_{i})$
\\
\hline
2 &
1 &
$b_{0} + b_{1}w_{1} + \ldots + b_{k}w_{k}$ &
0 &
d &
$\binom{n}{k}\binom{n-d}{k-d}(n-k)!(a_{0} + \frac{n+1}{k+1}\sum\limits_{j=1}^{k}jb_{j})$
\\
\hline
3 &
$a_{0} + a_{1}s_{1} + \ldots + a_{k}s_{k}$ &
$b_{0} + b_{1}w_{1} + \ldots + b_{k}w_{k}$ &
0 &
0 &
$\binom{n}{k}^{2}(n-k)!(a_{0} + \frac{n+1}{k+1}\sum\limits_{i=1}^{k}ia_{i})(b_{0} + \frac{n+1}{k+1}\sum\limits_{j=1}^{k}jb_{j})$
\\

 \hline
\end{tabular}
}
\caption{Formulas for $M(f_{\underline{P},Q},n)$ obtained with linearity of expectation in some special cases}
\label{table:M}
\end{table}
\end{theorem}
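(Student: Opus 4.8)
The plan is to start from the factorization established \emph{inside} the proof of Theorem~\ref{th:simpleStat}, namely
\[
M(f_{\underline{P},Q},n) = (n-k)!\left(\sum_{s\in T'}Q_1(s)\right)\left(\sum_{w\in W'}Q_2(w)\right),
\]
where $T'$ is the set of admissible value-vectors $s=(t_1,\dots,t_k)$ and $W'$ the set of admissible position-vectors $w=(w_1,\dots,w_k)$. Because the valuation polynomials in all three rows of Table~\ref{table:M} are affine (degree at most one, and only in the value-coordinates or only in the position-coordinates), each of the two sums splits by linearity into a constant term times the set size plus a weighted sum of single-coordinate sums. Thus the entire computation reduces to evaluating the sums $\sum_{s\in T'}t_i$ and $\sum_{w\in W'}w_j$, together with the sizes $|T'|$ and $|W'|$, the latter being supplied by Lemma~\ref{lemma:ExpBivinc} (which gives $|T'||W'|=\binom{n-c}{k-c}\binom{n-d}{k-d}$, with $|T'|=\binom{n-d}{k-d}$ governed by $\boldsymbol{D}(\underline{P})$ and $|W'|=\binom{n-c}{k-c}$ governed by $\boldsymbol{C}(\underline{P})$).

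The key computational step is the order-statistic identity: over all $k$-subsets of $[n]$, the sum of the $i$-th smallest element equals $i\binom{n+1}{k+1}=\binom{n}{k}\frac{i(n+1)}{k+1}$. I would prove this by writing $\sum_{v}v\binom{v-1}{i-1}\binom{n-v}{k-i}$ (here $v$ is the value of the $i$-th smallest element), then applying $v\binom{v-1}{i-1}=i\binom{v}{i}$ together with the Vandermonde-type identity $\sum_{v}\binom{v}{i}\binom{n-v}{k-i}=\binom{n+1}{k+1}$. Equivalently, this is the classical fact that the expected $i$-th order statistic of a uniformly random $k$-subset of $[n]$ is $\frac{i(n+1)}{k+1}$.

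With this identity in hand, each row follows by plugging in the relevant constraint data. For Row~$1$ we have $Q_2\equiv 1$ and $d=0$, so $\sum_{w\in W'}Q_2(w)=|W'|=\binom{n-c}{k-c}$, while $T'=V_{n,k}$ has $|T'|=\binom{n}{k}$; since $t_i$ is the $i$-th smallest value, the identity gives $\sum_{s\in T'}Q_1(s)=\binom{n}{k}\bigl(a_0+\frac{n+1}{k+1}\sum_{i=1}^{k}ia_i\bigr)$, and multiplying the three factors yields the tabulated formula. Row~$3$ is the case $c=d=0$, where both $T'$ and $W'$ are in bijection with $V_{n,k}$, the two affine sums are evaluated independently by the same identity, and their product produces the two bracketed factors against the prefactor $\binom{n}{k}^2(n-k)!$. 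Row~$2$ is the mirror of Row~$1$, with $Q_1\equiv 1$ and $c=0$, so $|T'|=\binom{n-d}{k-d}$ and $|W'|=\binom{n}{k}$, and one evaluates $\sum_{w\in W'}Q_2(w)$ by the same order-statistic computation applied to positions.

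The main obstacle I anticipate is precisely the index bookkeeping in Row~$2$ (and in the second factor of Row~$3$). By the definition $\sigma(w_i)=t_i$, the coordinate $w_j$ is the \emph{position} of the $j$-th smallest value, and under the bijection $W'\leftrightarrow V_{n,k}$ this position is the $P^{-1}(j)$-th smallest among the chosen positions, since the relative order of the positions is dictated by $P$. The order-statistic identity must therefore be applied at index $P^{-1}(j)$, producing $\sum_{j}P^{-1}(j)\,b_j$, which one then reindexes (setting $i=P^{-1}(j)$) into the form $\sum_i i\,b_{P(i)}$ recorded schematically in the table. Getting this relabeling through $P$ correct, rather than the superficially expected coordinatewise $\sum_j j\,b_j$, is the only genuinely delicate point; everything else is the two routine affine-sum evaluations justified by linearity of expectation.
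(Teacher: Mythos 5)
Your proof is correct, and its computational engine is the same as the paper's: linearity together with the order-statistic identity $\sum_{v} v\binom{v-1}{i-1}\binom{n-v}{k-i} = i\binom{n+1}{k+1}$, i.e.\ $\mathbb{E}(s_i)=\tfrac{i(n+1)}{k+1}$, which is exactly Equation \eqref{eq:Q1} in the paper. The organizational difference is that you reuse the factorization
\[
M(f_{\underline{P},Q},n)=(n-k)!\Bigl(\sum_{s\in T'}Q_1(s)\Bigr)\Bigl(\sum_{w\in W'}Q_2(w)\Bigr)
\]
already established inside the proof of Theorem~\ref{th:simpleStat}, which lets you treat all three rows uniformly; the paper instead re-derives this factorization probabilistically, for Row~1 by splitting over the $\binom{n-c}{k-c}$ admissible position-tuples and invoking symmetry, and for Row~3 by a double enumeration with indicator variables $Q^{*}_{1,i,j}$, $Q^{*}_{2,i,j}$. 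Your route is a legitimate and somewhat cleaner shortcut, since Theorem~\ref{th:simpleStat} precedes this statement.

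The point you call ``genuinely delicate'' deserves a stronger conclusion: it is not a schematic rephrasing of Table~\ref{table:M} but a correction of it. Under the paper's definition of a simple statistic, $\sigma(w_j)=t_j$, so $w_j$ is the position of the $j$-th smallest value and hence the $P^{-1}(j)$-th smallest of the $k$ occupied positions; the position factor must therefore be $b_0+\tfrac{n+1}{k+1}\sum_{j}P^{-1}(j)\,b_j=b_0+\tfrac{n+1}{k+1}\sum_{i}i\,b_{P(i)}$, exactly as you derive. The printed table uses $\sum_j j\,b_j$, and the paper's own proof of Row~3 makes the same slip (it asserts $\sum_j Q_2(w^{(j)})=\binom{n}{k}\bigl(b_0+\tfrac{n+1}{k+1}\sum_j j b_j\bigr)/k!$, where both the weights and the stray $k!$ are off --- the $k!$ is inconsistent with the table's own normalization). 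The two weightings agree only when $\sum_j P^{-1}(j)b_j=\sum_j jb_j$, e.g.\ for $P=id_k$. Concrete check: $P=21$, $Q_1\equiv 1$, $Q_2(w)=w_1$, $n=3$. Summing the position of the smaller element over all inversions of all $\sigma\in S_3$ gives $0+3+2+6+5+8=24$, which matches your formula $\binom{3}{2}^{2}\,1!\,\tfrac{4}{3}\,P^{-1}(1)=9\cdot\tfrac{4}{3}\cdot 2=24$, while Rows~2/3 of the table give $12$. So state plainly that Rows~2 and~3 hold with weights $P^{-1}(j)$ (equivalently, with $b_{P(i)}$ in place of $b_i$); as printed they are valid only under the reinterpretation that $w$ lists the occupied positions in increasing order, which contradicts the definition $\sigma(w_i)=t_i$. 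Row~1 needs no such adjustment, since $s$ is sorted by value by definition (and your Row~1 agrees with the paper's numerical example $M=40$); note also the table's Row~2 writes $a_0$ where $b_0$ is meant.
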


\begin{proof}
Consider the first row of Table \ref{table:M}. We have linear polynomial $Q_{1}$, constant $Q_{2}(w)=1$, $|\boldsymbol{C}(\underline{P})|=c$ for a given constant $c$ and $|\boldsymbol{D}(\underline{P})|=0$. One can write $M(f_{\underline{P},Q},n) =\\ \sum\limits_{\sigma\in S_{n}}\sum\limits_{s\in_{p}\sigma}Q(s,w)= \sum\limits_{\sigma\in S_{n}}\sum\limits_{s\in_{p}\sigma}Q_{1}(s) = \mathbb{E}(Q_{1}^{*})n!$, where $Q_{1}^{*}$ is a random variable defined over each $\sigma\in S_{n}$ as the sum of the $Q_{1}$-valuations for each occurrence of $\underline{P}$ in $\sigma$. Formally, $Q_{1}^{*}(\sigma) = \sum\limits_{s\in_{P}\sigma}Q_{1}(s)$. Let $v$ be the number of possible $k$-tuples of positions for the elements of the occurrences $s$ in $\sigma$, enumerated with $1,2,\ldots ,v$. We will use that
\[
Q_{1}^{*}(\sigma) = Q^{*}_{1,1}(\sigma) + Q^{*}_{1,2}(\sigma) + \cdots + Q^{*}_{1,v}(\sigma),
\]
where for each $j\in [v]$:
\[
Q^{*}_{1,j}(\sigma) \coloneqq 
\begin{cases}
Q_{1}(s),\text{ if $s\in_{P}\sigma$ and $s_{1},\ldots s_{k}$ appear in $\sigma$ at the $k$ positions indexed by $j$. }\\
0, \text{otherwise}.
\end{cases}
\]
We have $v = \binom{n-c}{k-c}$ since $|\boldsymbol{D}(\underline{P})| = 0$ and since for each $i\in \boldsymbol{C}(\underline{P})$, one can look at $P_{i}$ and $P_{i+1}$ as a single element. Due to symmetry, we have $\mathbb{E}(Q^{*}_{1,i}) = \mathbb{E}(Q^{*}_{1,j})$ for each $1\leq i,j\leq v$. Thus, using the linearity of expectation, we have
\[
\mathbb{E}(Q_{1}^{*}) = \sum\limits_{r=1}^{v}\mathbb{E}(Q_{1,r}^{*}) =  \binom{n-c}{k-c}\mathbb{E}(Q^{*}_{1,1}),
\]
where the $k$-tuple with number $1$ comprises the first $k$ possible positions for an occurrence of $\underline{P}$ in $\sigma$. We will show that
\begin{equation}
\label{eq:Q1}
\mathbb{E}(Q^{*}_{1,1}) = \frac{a_{0} + \frac{n+1}{k+1}\sum\limits_{i=1}^{k}ia_{i}}{k!}.    
\end{equation}

We have $\mathbb{E}(Q^{*}_{1,1}) = \frac{\mathbb{E}(Q_{1}(s))}{k!}$, where $s = (s_{1},\ldots ,s_{k})$ and $s_{1}< \cdots <s_{k}$ is a random $k$-subset of $[n]$. Since $Q(s) = a_{0} + a_{1}s_{1} + \cdots + a_{k}s_{k}$, we can use the linearity of expectation one more time to get $\mathbb{E}(Q_{1}(s)) = a_{0} + \sum\limits_{1}^{k} a_{i}\mathbb{E}(s_{i})$, where $s_{i}$ is the $i$-th ordered statistic for a $k$-sample without replacement from $[n]$. Therefore, 
\[
\mathbb{E}(s_{i}) = \sum\limits_{j=i}^{n-(k-i)}j\frac{\binom{j-1}{i-1}\binom{n-j}{k-i}}{\binom{n}{k}} = \frac{i}{\binom{n}{k}}\sum\limits_{j=i}^{n-(k-i)}\binom{j}{i}\binom{n-j}{k-i} = i\frac{\binom{n+1}{k+1}}{\binom{n}{k}} = i\frac{n+1}{k+1}.
\]
This establishes Equation \eqref{eq:Q1}.

Formula 2 for $M(f_{\underline{P},Q},n)$, under the conditions listed in the second row of Table \ref{table:M}, can be obtained in a similar way. To obtain Formula 3, a different transformation is used. In particular,
\begin{equation*}
 M(f_{\underline{P},Q},n) = \sum\limits_{\sigma\in S_{n}}\sum\limits_{s\in_{p}\sigma}Q(s,w) = \sum\limits_{\sigma\in S_{n}}\sum\limits_{s\in_{p}\sigma}Q_{1}(s)Q_{2}(w) = \mathbb{E}(Q_{1}^{*}Q_{2}^{*})n!,   
\end{equation*}
where $Q_{1}^{*}$ and $Q_{2}^{*}$ are random variables defined over $S_{n}$ as $Q_{1}^{*}(\sigma) = \sum\limits_{s\in_{P}\sigma}Q_{1}(s)$ and $Q_{2}^{*}(\sigma) = \sum\limits_{\substack{s\in_{P}\sigma \\ \sigma(w_{i})=s_{i}}}Q_{2}(w)$. Furthermore, let us also enumerate the possible $k$-tuples of values for an occurrence $s=(s_{1},\dots ,s_{k})$ with $s^{(1)},s^{(2)},\ldots ,s^{(\binom{n}{k})}$ and let $w^{(1)},w^{(2)},\ldots ,w^{(\binom{n}{k})}$ enumerates the possible $k$-tuples of positions for $s_{1},\dots ,s_{k}$ in an $n$-permutation $\sigma$. In addition, for $i,j\in [\binom{n}{k}]$, let $Q^{*}_{1,i,j}(\sigma) = Q_{1}(s^{(i)})$, if the values $s^{(i)}$ are at positions $w^{(j)}$ in $\sigma$ and let $Q^{*}_{1,i,j}(\sigma) = 0$ otherwise. Similarly, let $Q^{*}_{2,i,j}(\sigma) = Q_{2}(w^{(j)})$, if the values $s^{(i)}$ are at positions $w^{(j)}$ in $\sigma$ and let $Q^{*}_{2,i,j}(\sigma) = 0$ otherwise. Then, 
\begin{equation*}
    \mathbb{E}(Q_{1}^{*}Q_{2}^{*})n! = n!\sum\limits_{i=1}^{\binom{n}{k}}\sum\limits_{j=1}^{\binom{n}{k}}\mathbb{E}[Q^{*}_{1,i,j}Q^{*}_{2,i,j}] = n!\sum\limits_{i=1}^{\binom{n}{k}}\sum\limits_{j=1}^{\binom{n}{k}}\frac{1}{\binom{n}{k}k!}Q_{1}(s^{(i)})Q_{2}(w^{(j)}).
\end{equation*}
Thus, 
\begin{equation*}
    M(f_{\underline{P},Q},n) = (n-k)!\sum\limits_{i=1}^{\binom{n}{k}}\big(Q_{1}(s^{(i)})\sum\limits_{j=1}^{\binom{n}{k}}Q_{2}(w^{(j)})\big) = (n-k)!(\sum\limits_{i=1}^{\binom{n}{k}}Q_{1}(s^{(i)}))(\sum\limits_{j=1}^{\binom{n}{k}}Q_{2}(w^{(j)})).
\end{equation*}
Equation \eqref{eq:Q1} gives us $\sum\limits_{i=1}^{\binom{n}{k}}Q_{1}(s^{(i)}) = \binom{n}{k}\frac{a_{0} + \frac{n+1}{k+1}\sum\limits_{i=1}^{k}ia_{i}}{k!}$, $\sum\limits_{j=1}^{\binom{n}{k}}Q_{2}(w^{(j)}) = \binom{n}{k}\frac{b_{0} + \frac{n+1}{k+1}\sum\limits_{j=1}^{k}jb_{j}}{k!}$ and formula 3 follows.
\end{proof}
    \begin{example}
    $n=3$, $k=2$, $P=21$, $\boldsymbol{C}(\underline{P}) = \{1\}$, $\boldsymbol{D}(\underline{P}) = \emptyset$, $Q_{1}(s) = 3y_{1} + y_{2}$, $Q_{2}(w) = 1$. \\
    Then, we have $v = \binom{2}{1}$ possible sets of positions for an occurrence of the pattern, namely $(1,2)$ and $(2,3)$. One can readily check that $M(f_{\underline{P},Q},n) = \sum\limits_{\sigma\in S_{n}}\sum\limits_{s\in_{P}\sigma}Q_{1}(s) = 40$. Formula 1 in table \ref{table:M} indeed gives the same value.
        \end{example}
    
    To obtain $M(f_{\underline{P},Q},n)$ when $Q$ is of higher degree, one should be able to evaluate expectations of the kind $\mathbb{E}(s_{i_{1}}^{r_{1}}\cdots s_{i_{u}}^{r_{u}})$, where $\{s_{i_{1}},\ldots s_{i_{u}}\}$ is a random subset of $[n]$ with $u$ elements. To do that, one might use the theory of Ordered statistics (see \cite[Chapter 3.7]{ordStat}).

\section{Further questions}
We discuss three interesting further questions related to the results in the previous sections:
\setlist[itemize]{label=\rule[0.5ex]{0.6ex}{0.6ex}}
    \begin{itemize}
    \item Can we improve the bounds for the number of terms in Equation \eqref{T-ext-form}, Theorem \ref{th:simpleStat} and for the number of terms in Equation \eqref{eq:linearFormMain}, Theorem \ref{th:main}? Some computational evidence suggests that this might be possible.
    
    \item
     Can we prove the CLT for vincular patterns by giving either a combinatorial or algebraic proof to Equation \eqref{eq:lemmaVinc} in Theorem \ref{th:vinc}?
    
    \item Theorem \ref{th:main} shows that the aggregate of any permutation statistic is a linear combination of shifted factorials with constant coefficients. Similarly, in \cite{matchings}, Khare et al. showed that any statistic on matchings is a linear combination of double factorials with constant coefficients, whereas for statistics on the more general structure of set partitions, Chern et al. ~\cite{partitions} showed that we have linear combinations of shifted Bell numbers with polynomial coefficients. These facts suggest that most probably, there exists a combinatorial structure generalizing permutations, for which the aggregates of the statistics on it can be written as linear combinations of factorials with polynomial coefficients. Can we find such a structure, e.g., posets or polyominoes?   
    \end{itemize} 

\section*{Acknowledgement}
We are grateful to professor Catherine Yan and to professor Perci Diaconis for suggesting to us this approach for studying patterns in combinatorial structures. We are also thankful to Alexander Burstein for the helpful discussions over the results in Section \ref{sec:CLTs}.

\end{document}